\newtheorem{remark}{Remark}
\newtheorem{sche}{Scheme}
\newtheorem{assumption}[theorem]{Assumption}
\newtheorem{defn}[theorem]{Definition}
\newtheorem{lem}[theorem]{Lemma}
\newtheorem{thm}[theorem]{Theorem}
\definecolor{db}{rgb}{0.0470,0,0.5294}
\definecolor{dg}{rgb}{0.0,0.392,0.0}
\definecolor{firebrick}{rgb}{0.698,0.133,0.133}
\definecolor{bl}{rgb}{0.0,0.0,0.0}
\definecolor{linen}{rgb}{0.980,0.941,0.902}
\definecolor{ivory}{rgb}{1.0,1.0,0.941}
\definecolor{aliceblue}{rgb}{0.941,0.973,1.0}
\definecolor{beige}{rgb}{0.961,0.961,0.863}
\definecolor{tan}{rgb}{0.824,0.706,0.549}
\definecolor{lightsteelblue}{rgb}{0.690,0.769,0.871}
\definecolor{paleturquoise}{rgb}{0.686,0.933,0.933}
\definecolor{lightblue}{rgb}{0.678,0.847,0.902}
\definecolor{skyblue}{rgb}{0.529,0.808,0.922}
\definecolor{palegoldenrod}{rgb}{0.933,0.910,0.667}
\definecolor{lightgoldenrod}{rgb}{0.933,0.867,0.510}
\definecolor{lightyellow}{rgb}{1.0,1.0,0.878}
\definecolor{yellow}{rgb}{1.0,1.0,0.0}
\definecolor{lightyellow1}{rgb}{1.0,1.0,0.878}
\definecolor{lemonchiffon}{rgb}{1.0,0.980,0.804}
\definecolor{myyellow}{rgb}{1,1,.9}
\definecolor{darkgreen}{rgb}{0.0,0.392,0.0}
\definecolor{darkviolet}{rgb}{0.580,0.0,0.827}
\definecolor{lightsalmon}{rgb}{1.0,0.627,0.478}
\definecolor{orange}{rgb}{1.0,0.647,0.0}
\definecolor{darkblue}{rgb}{0.00,0.00,0.55}
\numberwithin{equation}{section}
\Crefname{table}{Table}{Tables}
\Crefname{figure}{Figure}{Figures}
\DeclareMathOperator*{\esssup}{ess\,sup}
\newcommand\titlelowercase[1]{\texorpdfstring{\lowercase{#1}}{#1}}
\begin{document}
	
	\title{\large{A D\titlelowercase{ivergence-free} P\titlelowercase{reserving} M\titlelowercase{ixed} F\titlelowercase{inite} E\titlelowercase{lement} M\titlelowercase{ethod} \titlelowercase{for}  T\titlelowercase{hermally} D\titlelowercase{riven} 
    A\titlelowercase{ctive}   F\titlelowercase{luid} M\titlelowercase{odel}}} 
	\author{Nan Zheng
			\thanks{
            School of Mathematics, Shandong University, Jinan, Shandong 250100, People’s Republic of China. Email: \href{mailto:202311835@mail.sdu.edu.cn}{202311835@mail.sdu.edu.cn}. } 
            \and  
            Qingguang Guan
            \thanks{
            School of Mathematics and Natural Sciences, University of Southern Mississippi, 118 College Drive, Hattiesburg, MS, 39406, USA   
            Email: \href{mailto:qingguang.guan@usm.edu}{qingguang.guan@usm.edu}.
            }
            \and  
            Wenlong Pei
            \thanks{
            Department of Mathematics, The Ohio State University, Columbus, OH 43210,USA. Email: \href{mailto:pei.176@osu.edu}{pei.176@osu.edu}.} 
            \and  
            Wenju Zhao
            \thanks{
            School of Mathematics, Shandong University, Jinan, Shandong 250100, People’s Republic of China.
            Email: \href{mailto:zhaowj@sdu.edu.cn}{zhaowj@sdu.edu.cn}.
            }
		    }
	\date{\emty}
	\maketitle

    \begin{abstract}
		In this report, we propose a divergence-free preserving mixed finite element method (FEM) for the system of nonlinear fourth-order thermally driven active fluid equations.
        By introducing two auxiliary variables, we lower the complexity of the model and enhance the robustness of the algorithm.
        The auxiliary variable \( w = \Delta u \) is used to convert the original fourth-order system to an equivalent system of second-order equations, thereby easing the regularity constraints imposed on standard \( H^2 \)-conforming finite element spaces.
        The second variable \( \eta \), analogous to the pressure, helps the scheme preserve the divergence-free condition arising from the model.
        The two-step Dahlquist–Liniger–Nevanlinna (DLN) time integrator, unconditionally non-linear stable and second-order accurate under non-uniform time grids, is combined with the mixed FEM for fully discrete approximation.
        Due to the fine properties of the DLN scheme, we prove the boundedness of model energy and the associated error estimates under suitable regularity assumptions and mild time restrictions.
        Additionally, an adaptive time-stepping strategy based on a minimum-dissipation criterion is to balance computational costs and time efficiency. 
        Several numerical experiments validate the theoretical findings and demonstrate the method’s effectiveness and accuracy in simulating complex active fluid dynamics.

	\end{abstract}
	
	\begin{keywords}
		Thermally driven active fluid, Fourth-order PDEs,  Divergence-free persevered mixed finite element method, Variable time step, Error estimates.
	\end{keywords}
	
	\begin{AMS}
		35Q92, 65M60, 76D07, 35G20, 76A02
	\end{AMS}

    \section{Introduction}
    The system of thermally driven active fluid equations, composed of self-propelled particles, exhibits unique dynamics due to the interaction between internal propulsion and external forces such as temperature gradients. 
    The coupling of temperature with the velocity field introduces additional complexities, particularly in the modeling of heat-driven instabilities and convection, and thus becomes essential in a wide range of physical phenomena, such as thermally driven active suspensions, bacterial suspensions, and other biophysical systems \cite{pnas.1722505115, qi2022emergence, ramaswamy2019active,annurev-fluid-010816-060049}.
    Herein, we consider the system on a bounded domain $D \subset \mathbb{R}^2$ and time interval $[0,t^{\ast}]$:
    the velocit $u$, pressure $p$ and temperature $T$ are governed by 
    \begin{equation}\label{TDAFEs}
    \begin{cases}
    u_{t} - \mu \Delta u + \gamma \Delta^2 u+\nu (u \cdot \nabla)u  + \rho u +  \lambda |u|^2u+ \nabla p = \sigma \xi T,    
    &\text{in}  ~D \times (0, t^*], \\
    \nabla \cdot u = 0,   
    &\text{in} ~ D \times (0, t^*],\\
    T_{t} - \nabla \cdot(\kappa \nabla T) + u \cdot \nabla T = 0,   
    &\text{in}  ~D \times (0, t^*].
    \end{cases}
    \end{equation}
    subject to the following initial-boundary conditions: 
    \begin{align*}
        \begin{cases}
            u(x,0) = u_{0} ~ \text{in} ~D, \\
            u =\Delta u = 0  ~\text{on}~ \partial D,
        \end{cases}
        \quad
        \begin{cases}
            T(x,0) = T_{0} ~\text{in} ~D, \\
            T = 0 ~\text{on} ~ \Gamma_{1}, \ \ \partial_{n} T = 0 ~\text{on}   ~\Gamma_{2}.
        \end{cases}
    \end{align*}
    Here $\partial D = \Gamma_{1} \cup \Gamma_{2}$ is Lipschitz boundary and $\partial_{n}$ is the directional derivative in the direction of ourward normal vector on $\partial D$. 
    Non-negative parameters $\mu, \gamma, \nu$ represent the viscosity coefficient, the generic stability coefficient, and the density coefficient, respectively.
    Terms $\rho u$ ($\rho \in \mathbb{R}$) and $\lambda |u|^2 u$ ($\lambda \geq 0$) correspond to a quartic Landau velocity potential \cite{ramaswamy2019active,toner1998flocks}.
    In addition, \( \sigma = \mu \cdot Ra \) ($Ra > 0$ ) is a buoyancy-related parameter, $Ra > 0$ the Rayleigh number, 
    \( \kappa \) the thermal conductivity coefficient, and \( \xi \) a unit vector aligned with the direction of gravitational acceleration	\cite{bairi2014review,li2022optimal,PATEL2014197}.

    The model of active fluid dynamics often involves generalized forms of the Navier-Stokes equations augmented with higher-order dissipative terms and nonlinear active forcing, and hence effectively captures rich dynamical behaviors.
    Nevertheless, the non-linear system of fourth-order equations brings about substantial analytical and numerical challenges, especially when it incorporates temperature effects in \eqref{TDAFEs}.
    To address these issues and reduce the regularity requirements, we introduce an auxiliary variable  \( w = -\Delta u \), and reformulate the original fourth-order active fluid equations into the following equivalent system of second-order equations. 
    \begin{equation}\label{TDAFEs 2nd}
    \begin{cases}
    u_{t} - \mu \Delta u - \gamma \Delta w + \nu (u \cdot \nabla) u + \rho u + \lambda |u|^2 u + \nabla p =  \sigma \xi T, &\text{in} ~ D \times (0, t^*], \\
    w = -\Delta u, &\text{in} ~ D \times (0, t^*], \\
    \nabla \cdot u = 0, &\text{in} ~ D \times (0, t^*],\\
    T_{t} - \nabla \cdot(\kappa \nabla T) + u \cdot \nabla T = 0,   
    &\text{in}  ~D \times (0, t^*].
    \end{cases}
    \end{equation}
    subject to the following modified initial-boundary conditions:
    \begin{align*}
    \begin{cases}
    u(x,0) = u_{0} ~ \text{in} ~D, \\
    u =w = 0  ~\text{on}~ \partial D,
    \end{cases}
    \quad
    \begin{cases}
    T(x,0) = T_{0} ~\text{in} ~D, \\
    T = 0 ~\text{on} ~ \Gamma_{1}, \ \ \partial_{n} T = 0 ~\text{on}   ~\Gamma_{2}.
    \end{cases}
    \end{align*}
    The resulting system \eqref{TDAFEs 2nd} is eligible for the use of finite element methods based on $H_0^1$-conforming basis functions without the restrictive \( H^2 \)-regularity requirements.
    More importantly, the auxiliary variable inherently satisfies a divergence-free constraint $(\nabla \cdot w = 0)$, rendering the system \eqref{TDAFEs 2nd} preserve physical fidelity and the incompressibility condition, which are crucial for realistic simulations \cite{da2025error,da2017divergence}.

    Finite element methods (FEM), known for their robustness, flexibility, and effectiveness, are extensively employed in spatial discretizations of Navier-Stokes system-related fluid dynamics models \cite{abgrall2023hybrid,abgrall2020analysis,ayuso2005postprocessed,he2007stability,MR4293957,MR4835947}.
    For temporal discretization, a variety of approaches have been thoroughly analyzed, including the Euler scheme, Crank-Nicolson related scheme, Runge-Kutta scheme, and backward differentiation formula scheme \cite{BAI20123265,baker2024numerical,banjai2012runge,decaria2022general,ern2022invariant,AAMM-16-5,hou2025unconditionally,MR4835947}, etc. 
    Recently, much effort has been devoted to the numerical analysis of variable time-stepping schemes and their potential for time adaptivity \cite{ait2023time,MR4471049}. 
    The family of Dahlquist-Liniger-Nevanlinna (DLN) methods (with one parameter $\theta \in [0,1]$), which ensures stability and second-order accuracy for general dissipative nonlinear systems with arbitrary time grids \cite{dahlquist1983stability,LPT21_AML,LPT23_ACSE},
    has been proven successful in simulations of stiff differential equations and complex fluid models \cite{CLPX2025_JSC,layton2022analysis,pei2024semi,QHPL21_JCAM,SP24_IJNAM}. 

    Given the time interval $[0,t^*]$, we denote $\{ t_{n} \}_{n=0}^{M}$ its partition and $k_n = t_{n+1} - t_n$ the local time step size.
    For the initial value problem 
    $y'(t) = g(t,y(t))$ with $t \in [0,t^\ast], \ y(0) = y_{0} \in \mathbb{R}^{d}$, 
    the family of variable time-stepping DLN methods for the problem reads 
    \begin{gather}
    \sum_{\ell =0}^{2}{\alpha _{\ell }}y_{n-1+\ell }
    = \widehat{k}_{n} g \Big( \sum_{\ell =0}^{2}{\beta _{\ell }^{(n)}}t_{n-1+\ell } ,
    \sum_{\ell =0}^{2}{\beta _{\ell }^{(n)}}y_{n-1+\ell} \Big), \qquad n=1,\ldots,M-1.
    \label{eq:1leg-DLN}
    \end{gather}
    Here $y_{n}$ represents the DLN solution of $y(t)$ at time $t_{n}$. The coefficients of the DLN method in \eqref{eq:1leg-DLN} are 
    \begin{gather}
    \label{DLNcoeff}
    \begin{pmatrix}
    \alpha _{2} \vspace{0.2cm} \\
    \alpha _{1} \vspace{0.2cm} \\
    \alpha _{0} 
    \end{pmatrix}
    = 
    \begin{pmatrix}
    \frac{1}{2}(\theta +1) \vspace{0.2cm} \\
    -\theta \vspace{0.2cm} \\
    \frac{1}{2}(\theta -1)
    \end{pmatrix}, \ \ \ 
    \begin{pmatrix}
    \beta _{2}^{(n)}  \vspace{0.2cm} \\
    \beta _{1}^{(n)}  \vspace{0.2cm} \\
    \beta _{0}^{(n)}
    \end{pmatrix}
    = 
    \begin{pmatrix}
    \frac{1}{4}\Big(1+\frac{1-{\theta }^{2}}{(1+{%
            \varepsilon _{n}}{\theta })^{2}}+{\varepsilon _{n}}^{2}\frac{\theta (1-{%
            \theta }^{2})}{(1+{\varepsilon _{n}}{\theta })^{2}}+\theta \Big)\vspace{0.2cm%
    } \\
    \frac{1}{2}\Big(1-\frac{1-{\theta }^{2}}{(1+{\varepsilon _{n}}{%
            \theta })^{2}}\Big)\vspace{0.2cm} \\
    \frac{1}{4}\Big(1+\frac{1-{\theta }^{2}}{(1+{%
            \varepsilon _{n}}{\theta })^{2}}-{\varepsilon _{n}}^{2}\frac{\theta (1-{%
            \theta }^{2})}{(1+{\varepsilon _{n}}{\theta })^{2}}-\theta \Big)%
    \end{pmatrix},
    \end{gather}
    where $\varepsilon _{n} = (k_n - k_{n-1})/(k_n + k_{n-1}) \in (-1,1)$ is the step variability.
    The weighted average of time step  $\widehat{k}_n = \alpha_2 k_n - \alpha_0 k_{n-1}$ in \eqref{eq:1leg-DLN} is constructed for second-order accuracy. 
    The family of schemes in \eqref{eq:1leg-DLN} is reduced to the midpoint rule on $[t_{n}, t_{n+1}]$ if $\theta = 1$ and the midpoint rule on $[t_{n-1}, t_{n+1}]$ if $\theta = 0$ \cite{MR4432611,MR4092601}.

    Motivated by these advances, we integrate mixed finite element spatial discretization with DLN temporal discretization to propose a computationally efficient, stable, and accurate framework for analyzing the sophisticated dynamics exhibited by the system of thermally driven active fluid equations in \eqref{TDAFEs 2nd}.
    An adaptive time-stepping strategy based on the minimal dissipation criterion \cite{capuano2017minimum} is further developed to balance the conflict between accuracy and computational costs.
    The main contributions of this report are to:  
    \begin{itemize}
        \item construct a divergence-free preserving mixed finite element spatial discretization for the system of fourth-order thermally driven active fluid equations in \eqref{TDAFEs}, which reduces complexity and relaxes regularity requirements,
        \item     
        utilize the variable time-stepping DLN temporal integrator for full discretization and present rigorous proof that the fully discrete algorithm is stable in the model energy 
        \begin{align*}
            \mathcal{E}(t) = \frac{1}{2} \Big( \int_{D} \big( |u(t)|^2  + T(t)^2 \big) dx \Big)
        \end{align*}
        under arbitrary time grids,
        \item 
        carry out detailed proof of error estimates for velocity, pressure, and temperature under moderately relaxed regularity requirements and time step constraints,
        \item
        design a time-adaptive strategy based on the minimal dissipation criterion, which significantly enhances time efficiency in practice.
    \end{itemize}

    The remainder of this paper is structured as follows.
    Necessary preliminaries and notations are provided in Section \ref{sec:sec2}. 
    In Section \ref{sec:sec3}, we establish the fully discrete scheme for the fourth-order thermally driven active fluid equations \eqref{TDAFEs} based on a divergence-free preserving mixed finite element spatial discretization and variable time-stepping DLN time integrator. 
    With the help of a mild time step restriction, we prove rigorously that the scheme is long-time stable in model energy under non-uniform time grids.
    Error estimates of the resulting fully discrete scheme with detailed proof are presented in Section \ref{sec:sec4}.
    A series of numerical experiments in Section \ref{sec:sec5} fully support the theoretical findings.
    Section \ref{sec:sec6} summarizes the main results of the report and outlines potential directions for future research.

    \section{Notation and Preliminaries} \label{sec:sec2}
    The Sobolev space \( W^{r,p}(D) \) (\( 1 \leq p \leq \infty \), \( r \in \mathbb{N} \) ) equipped with the norm \( \|\cdot\|_{W^{r,p}} \) consists of functions of which weak derivatives up to order \( r \) belong to \( L^p(D) \).
    For $p=2$,  \( W^{r,2}(D) \) is the Hilbert space $H^r(D)$ with norm \( \|\cdot\|_r \).
    In particular, \( L^2(D) := H^{0}(D) \) is the Lebesgue space endowed with the usual inner product $(\cdot, \cdot)$  and norm \( \|\cdot\| \). 
    We need the following Bochner spaces for time-dependent functions
    \begin{align*}
        L^p([0,T]; H^r(D)) &= \Big\{ v(\cdot,t) \in H^r(D): \| v \|_{p,r} = \Big( \int_{0}^{T}\|v(t)\|_r^p\mathrm{d}t \Big)^\frac{1}{p} < \infty \Big\},  \\
        L^{\infty}([0,T]; H^r(D)) &= \Big\{ v(\cdot,t) \in H^r(D): \| v \|_{\infty,r} = \esssup_{0 \leq t \leq T} \|v(t)\|_r< \infty \Big\}. 
    \end{align*}
    The solution spaces for velocity $u$, pressure $p$ and temperature $T$ in \eqref{TDAFEs 2nd} are 
    \begin{align*}
    X &= \Big\{ u \in [H^1(D)]^2 : u = 0 ~\text{on}~ \partial D \Big\},
    \\
    Q &= \Big\{ p \in L^2(D) : \int_{D} p \,\mathrm{d}x = 0 \Big\},
    \\
    J &=
    \Big\{ T \in H^1(D) :  T = 0 ~\text{on}~ \Gamma_1 \Big\}.
    \end{align*}
    The divergence-free subspace for velocity is 
    \begin{equation}\label{V_space}
    V = \Big\{ v \in X; \nabla \cdot v =0 ~in ~D  \Big\},
    \end{equation}
    Moreover, we define two trilinear operators 
    \begin{align}\label{b form}
    b(u,v,w) &= \frac{1}{2} \big( (u \cdot \nabla) v, w \big) - \frac{1}{2} \big( (u \cdot \nabla) w, v \big), \quad 
    u,v,w \in X,
    \\
    b^*(u,\phi,\Psi) &= \frac{1}{2}\big( u \cdot \nabla \phi, \Psi \big) - \frac{1}{2} \big(u \cdot \nabla 	\Psi, \phi \big), \quad \quad \ u \in X , \quad \phi , \Psi \in J. 
    \end{align}
    We have the following estimates for $b(\cdot,\cdot,\cdot)$ (see \cite{he2007stability,MR1043610} for proof):
	\begin{align}
	&b(u,v,w) \leq C \| \nabla u \| \|\nabla v\| \|\nabla w\|,  \quad \forall u,v,w \in X,\label{b 1 1 1}
	\\
	&b(u,v,w) \leq C \| u \|^{\frac{1}{2}} \|\nabla u\|^{\frac{1}{2}} \|\nabla v\| \|\nabla w\|,  \quad \forall u,v,w \in X, \label{b 0 1 1 1}
	\\
	&b(u,v,w) \leq C \| u \|_{1} \| v \|_{1} \| w \|^{\frac{1}{2}} 
	\| w \|_{1}^{\frac{1}{2}},   \label{b 1 1 0 1} \\
	&b(u,v,w) \leq C \|u\| \|v\|_2 \|w\|_1, \quad \forall u,w \in X, \ v \in [H^2(D)]^2, \label{b 0 2 1} \\
	&b(u,v,w) \leq C \|u\|_{2} \|\nabla v \| \|w\|, \quad \forall u,w \in X, \ 
	v \in [H^2(D)]^2  \cap X, \label{b 2 1 0} \\
	&b(u,v,w) \leq C \|u\|_{1} \| v \|_{2} \|w\|, \quad \forall u,w \in X, \ 
	v \in [H^2(D)]^2 \cap X, \label{b 1 2 0} 
	\end{align}
    where \( C > 0 \) is a positive constant which only depends on the domain \( D \). 
    The operator \( b^*(\cdot, \cdot, \cdot) \) has the same estimation results as 
    \eqref{b 1 1 1} - \eqref{b 1 2 0}.

    The weak form of \eqref{TDAFEs 2nd} is: 
    finding $(u,w,p,T) \in (X,X,Q,J) $ for all $t \in (0,t^*]$ , such that, for all $(v,\varphi,q,\vartheta) \in (X,X,Q,J)$,
    \begin{align} 
    \begin{split} \label{variational-formula-1} 
    &(u_{t}, v)
    + \mu(\nabla u, \nabla v) 
    +\gamma (\nabla w,\nabla v)
    + \nu b( u,  u, v)  
    + \rho (u,v) 
    + \lambda (|u|^2 u ,v)
    \\
    &
    - (p, \nabla \cdot v)
    = \sigma\xi(T, v), 
    \end{split}
    \\
    &(w,\varphi) = (\nabla u,\nabla \varphi), \label{variational-formula-2}
    \\
    &(\nabla \cdot u, q) = 0,  \label{variational-formula-3}
    \\
    &	(T_{t}, \vartheta)
    +\kappa (\nabla T, \nabla \vartheta)
    +b^{\ast}(u,  T, \vartheta)
    = 0.\label{variational-formula-4}
    \end{align} 
    An equivalent weak form of \eqref{variational-formula-1} - \eqref{variational-formula-4} can be derived without embedding the divergence-free conditions directly into the function spaces: 
    finding $(u,w,\phi,p,T) \in (X,X,Q,Q,J)$ such that for all $t \in (0,t^{\ast}]$ and all $(v,\varphi,\zeta,q,\vartheta) \in (X,X,Q,Q,J)$,
    \begin{align} 
    \begin{split} \label{variational_formula_second_1} 
    &(u_{t}, v)
    + \mu(\nabla u, \nabla v) 
    +\gamma (\nabla w,\nabla v)
    + \nu b( u,  u, v)  
    + \rho (u,v) 
    + \lambda (|u|^2 u ,v)
    \\
    &
    - (p, \nabla \cdot v)
    = \sigma\xi(T, v), 
    \end{split}
    \\
    &(w,\varphi)-(\phi,\nabla \cdot \varphi) = (\nabla u,\nabla \varphi), \label{variational_formula_second_2}
    \\
    &(\nabla \cdot u, q) = 0.  \label{variational_formula_second_3}
    \\
    &(\nabla \cdot w, \zeta) = 0,  \label{variational_formula_second_4}
    \\
    &	(T_{t}, \vartheta)
    +\kappa (\nabla T, \nabla \vartheta)
    +b^{\ast}(u,  T, \vartheta)
    = 0.\label{variational_formula_second_5}
    \end{align}

    We need the following lemma about monotonicity and continuity properties on $\mathbb{R}^{2}$ 
    (see \cite{deugoue2022numerical,glowinski1975approximation} for proof).
    \begin{lemma}\label{vector_norm_inequality_lem} 
        For all $x, y \in \mathbb{R}^2$, the following inequalities hold:  
        \begin{align}
        &\text{monotonicity: } 
        \big( |x|^{2} x - |y|^{2} y, x-y \big)_{\mathbb{R}^2}  \geq C |x-y|^4,  \label{vector_norm_inequality_1}  \\
        &\text{continuity: } \ \ \ \ 
        \big| |x|^{2} x - |y|^{2} y \big| \leq C \big( |x| + |y| \big)^{2} |y - x|,  \label{vector_norm_inequality_2}
        \end{align}
        where $C > 0$ is a constant independent of $x$ and $y$, and \( (\cdot,\cdot)_{\mathbb{R}^2} \) denotes the standard Euclidean inner product on $\mathbb{R}^2$.
    \end{lemma}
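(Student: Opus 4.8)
The plan is to prove both inequalities by elementary algebraic manipulation of the expression $|x|^2 x - |y|^2 y$ on $\mathbb{R}^2$, exploiting the identity that lets one split this difference into a ``radial'' part and a ``directional'' part. First I would write, for $x,y\in\mathbb{R}^2$,
\begin{equation*}
|x|^{2}x - |y|^{2}y
= \tfrac{1}{2}\bigl(|x|^{2}+|y|^{2}\bigr)(x-y)
+ \tfrac{1}{2}\bigl(|x|^{2}-|y|^{2}\bigr)(x+y),
\end{equation*}
which is checked by expanding both sides. This decomposition is the workhorse for both parts.

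\emph{Monotonicity.} Taking the Euclidean inner product of the displayed identity with $x-y$ gives
\begin{equation*}
\bigl(|x|^{2}x-|y|^{2}y,\,x-y\bigr)_{\mathbb{R}^2}
= \tfrac12\bigl(|x|^{2}+|y|^{2}\bigr)|x-y|^{2}
+ \tfrac12\bigl(|x|^{2}-|y|^{2}\bigr)\bigl((x+y)\cdot(x-y)\bigr),
\end{equation*}
and since $(x+y)\cdot(x-y) = |x|^2-|y|^2$, the second term equals $\tfrac12(|x|^2-|y|^2)^2 \ge 0$, while the first is bounded below by $\tfrac12|x-y|^2\cdot|x-y|^2$ whenever one uses $|x|^2+|y|^2 \ge \tfrac12|x-y|^2$ (from the parallelogram law $|x|^2+|y|^2 = \tfrac12|x+y|^2 + \tfrac12|x-y|^2$). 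This yields $(|x|^2x-|y|^2y,x-y)_{\mathbb{R}^2}\ge \tfrac14|x-y|^4$, giving \eqref{vector_norm_inequality_1} with $C=\tfrac14$ (a sharper constant, e.g.\ with $C=\tfrac{1}{2}$, can be extracted but is not needed).

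\emph{Continuity.} Applying the triangle inequality to the same decomposition and using $\bigl||x|^2-|y|^2\bigr| = \bigl(|x|+|y|\bigr)\bigl||x|-|y|\bigr| \le \bigl(|x|+|y|\bigr)|x-y|$ together with $|x+y|\le |x|+|y|$ and $|x|^2+|y|^2 \le (|x|+|y|)^2$, one obtains
\begin{equation*}
\bigl||x|^2x-|y|^2y\bigr|
\le \tfrac12\bigl(|x|+|y|\bigr)^{2}|x-y| + \tfrac12\bigl(|x|+|y|\bigr)|x-y|\,\bigl(|x|+|y|\bigr)
= \bigl(|x|+|y|\bigr)^{2}|x-y|,
\end{equation*}
which is \eqref{vector_norm_inequality_2} with $C=1$. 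Since $C$ arises purely from numerical constants in these estimates, it is manifestly independent of $x$ and $y$, and (being a pointwise statement in $\mathbb{R}^2$) the argument is in fact dimension-independent.

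There is no serious obstacle here: the only point requiring a little care is choosing the right algebraic split so that the cross term in the monotonicity estimate comes out with a favorable sign rather than merely being controlled by Cauchy--Schwarz; the identity above makes that automatic, since the cross term collapses to a perfect square.
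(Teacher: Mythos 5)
Your proof is correct and complete. Note that the paper itself does not prove this lemma; it only points to the references \cite{deugoue2022numerical,glowinski1975approximation}, so your argument is a genuine addition rather than a variant of an in-paper proof. The decomposition
\begin{equation*}
|x|^{2}x-|y|^{2}y=\tfrac12\bigl(|x|^{2}+|y|^{2}\bigr)(x-y)+\tfrac12\bigl(|x|^{2}-|y|^{2}\bigr)(x+y)
\end{equation*}
is verified by direct expansion, the cross term in the monotonicity estimate does collapse to the perfect square $\tfrac12(|x|^{2}-|y|^{2})^{2}\ge 0$, and the parallelogram law gives $|x|^{2}+|y|^{2}\ge\tfrac12|x-y|^{2}$, so \eqref{vector_norm_inequality_1} holds with $C=\tfrac14$; the continuity bound \eqref{vector_norm_inequality_2} with $C=1$ follows exactly as you write it. The only inaccuracy is the parenthetical claim that a sharper constant such as $C=\tfrac12$ could be extracted in the monotonicity inequality: taking $y=-x$ gives $\bigl(|x|^{2}x-|y|^{2}y,\,x-y\bigr)=4|x|^{4}$ while $|x-y|^{4}=16|x|^{4}$, so $C=\tfrac14$ is in fact optimal. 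Since the lemma only asserts the existence of some $C>0$, this side remark does not affect the validity of your proof, and the argument is, as you observe, dimension-independent.
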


    \section{Spatial and temporal discretization} \label{sec:sec3}
    \subsection{Spatial discretization}
    Let $\mathscr{T}_h$ ($0<h<1$) be a regular triangulation of $\overline{D} = \cup_{K\in\mathscr{T}_h}\overline{K}$ with the mesh size $h$.
    For any integer $r \geq 1$, $P_k(K)$ denotes the space of polynomials of degree less than or equal to $r$ on $K \in\mathscr{T}_h$. The finite element spaces for solution spaces are
    \begin{align}
    &X_{h} = \Big\{ v^{h} \in [C^{0}(\bar{\Omega})]^2 \cap X : v^{h} \mid_{K} \in P_{r+1}(K), \forall K \in \mathscr{T}_{h} \Big\},   \label{zeq:FEM-space-Xh-1}\\
    &Q_{h} = \Big\{ q^{h} \in C^{0}(\bar{\Omega}) \cap L^2(D) : q^{h} \mid_{K} \in P_{r}(K), \forall K \in \mathscr{T}_{h} \Big\},   \label{zeq:FEM-space-Qh-1} \\
    &J_{h} = \Big\{ \vartheta^{h} \in C^{0}(\bar{\Omega}) \cap J : \vartheta^{h} \mid_{K} \in P_{r+1}(K), \forall K \in \mathscr{T}_{h} \Big\},   \label{zeq:FEM-space-Jh-1} \\
    &V_{h} = \Big\{ v^{h} \in X_{h} : ( \nabla \cdot v^{h}, q^{h} ) = 0, \forall q^{h} \in Q_{h} \Big\}.  \label{zeq:FEM-space-Vh-1}
    \end{align}
    The space $X_h \times Q_h$ in \eqref{zeq:FEM-space-Xh-1}-\eqref{zeq:FEM-space-Qh-1}  satisfies the Ladyzhenskaya-Babuska-Brezzi condition ($LBB^h$ condition): 
    for any $ q^h \in Q_h$ 
    \begin{equation}\label{LBB}
    \|q^h\|_{L^2(D)} \leq C \sup_{v^h \in X_h \setminus \{0\}} \frac{(\nabla \cdot v^h, q^h)_{L^2(D)}}{\|\nabla v^h\|_{L^2(D)}}.
    \end{equation}
    where $C>0$ is independent of $q^h$. 
    Taylor--Hood element space, Mini element space, and {Scott--Vogelius element} space~\cite{MR813691} are all typical finite element spaces having $LBB^h$ condition in \eqref{LBB}. 
    We choose Taylor--Hood element space ($r=1$) for spatial discretization throughout our work.
    $X_h$ satisfies the inverse inequality
    \begin{align}
    \label{inverse-ineq}
    \| \nabla v^h \| \leq C h^{-1} \| v^h \|, \qquad \forall v^h \in X_h.
    \end{align}
    Here $C$ is a positive constant independent of the mesh diameter $h$.
    We introduce Ritz projection $\mathcal{R}_h:J\rightarrow J_h$ 
    \begin{equation}\label{Ritz-projection-defination}
    \big( \nabla(T- \mathcal{R}_h T),\nabla \vartheta^h \big)=0,  \quad 
    \big(  T - \mathcal{R}_h T, 1 \big) = 0,   \quad \forall \vartheta^h \in J_h.
    \end{equation}
    The Ritz projection satisfies the following estimates \cite{Cia02_SIAM,he2005stabilized,Tho06_Springer}
    \begin{equation}\label{Ritz projection}
    \| T- \mathcal{R}_h T\| +h \|\nabla(T- \mathcal{R}_h T)\|\leq C h ^{r+2}\| T \|_{r+2},
    \end{equation}
    We define a Stokes-type projection \( \mathcal{S}_h : (X, X, Q, Q) \rightarrow (X_h, X_h, Q_h, Q_h) \). 
    Given the pair of function \( (u, w, \phi, p) \in (X, X, Q, Q) \), 
    $\mathcal{S}_h (u, w, \phi, p) = (\mathcal{S}_h u, \mathcal{S}_h w, \mathcal{S}_h \phi, \mathcal{S}_h p)$ uniquely solves: for all \( (v^h, \varphi^h, \zeta^h, q^h) \in (X_h, X_h, Q_h, Q_h) \) 
    \begin{equation}\label{Stokes-type-projection-defination}
    \begin{cases}
    \mu(\nabla(u - \mathcal{S}_{h} u), \nabla v^h) 
    + \gamma (\nabla(w - \mathcal{S}_{h} w), \nabla v^h) 
    - (p - \mathcal{S}_{h} p, \nabla \cdot v^h) = 0, 
    \\
    (w - \mathcal{S}_{h} w, \varphi^h) - (\phi - \mathcal{S}_{h} \phi, \nabla \cdot \varphi^h)
    = (\nabla(u - \mathcal{S}_{h} u), \nabla \varphi^h),
    \\
    (\nabla \cdot \mathcal{S}_h u, q^h) = 0,
    \\
    (\nabla \cdot \mathcal{S}_h w, \zeta^h) = 0.
    \end{cases}
    \end{equation}
    The projection $\mathcal{S}_h$ in \eqref{Stokes-type-projection-defination} satisfies the following approximation 
    \begin{equation}\label{Stokes-type projection}
    \begin{split}
    &\| u - \mathcal{S}_h u\| + \| w - \mathcal{S}_h w\|
    + h\left\{
    \|\nabla (u - \mathcal{S}_h u)\| + \|\nabla (w - \mathcal{S}_h w)\| + \|p -  \mathcal{S}_h p\|
    \right\} 
    \\
    &\leq C h ^{r+2} \left\{
    \| u \|_{r+2} + \| w \|_{r+2} + \| p \|_{r+1}
    \right\}.
    \end{split}
    \end{equation}
    The proof of \eqref{Stokes-type projection} is similar to the approximation of Stokes projection, we refer to \cite{GR86_Springer,Joh16_Springer}.

    \subsection{DLN fully discrete scheme} 
    To formulate the DLN fully discrete scheme, we introduce the following notation. 
    Given arbitrary sequence $\{ z_{n} \}_{n=0}^{\infty}$,  we adopt the following notations for convenience
    \begin{align*}
    z_{n,\alpha} = \alpha_{2} z_{n+1} + \alpha_{1} z_{n} + \alpha_{0} z_{n-1}, \qquad
    z_{n,\beta} = \beta _{2}^{(n)} z_{n+1} + \beta _{1}^{(n)} z_{n} 
    + \beta _{0}^{(n)} z_{n-1}, 
    \end{align*}
    where $\{ \alpha_{\ell} \}_{\ell=0}^{2}$ and $\{ \beta_{\ell}^{(n)} \}_{\ell=0}^{2}$ are coefficents of the DLN method in \eqref{DLNcoeff}.
    The fully discrete scheme of \eqref{variational_formula_second_1} - \eqref{variational_formula_second_5} is 
    \begin{sche}[DLN fully discrete scheme] \label{fully discrete formulations scheme}
        Given $u_n^h, u_{n-1}^h \in X_h,T_n^h, T_{n-1}^h \in J_h$ for $n= 1, 2, \dots, M-1$, 
        find $(u_{n+1}^h, w_{n+1}^h,\phi_{n+1}^h, p_{n+1}^h,T_{n+1}^h) \in( X_h, X_h, Q_h,Q_h,J_h)$ such that, for all $(v^h, \varphi^h,\zeta^h,q^h,\vartheta^h) \in 
        ( X_h, X_h,Q_h,Q_h,J_h)$,
        \begin{align} \label{fully discrete formulations 1}
        \begin{split}
        &\frac{1}{\widehat{k}_n}(u_{n,\alpha}^h, v^h)
        + \mu(\nabla u_{n,\beta}^h, \nabla v^h) 
        +\gamma (\nabla w_{n,\beta}^h,\nabla v^h)
        + \nu b( u_{n,\beta}^h,  u_{n,\beta}^h, v^h) 
        \\
        &\quad  
        + \rho (u_{n,\beta}^h,v^h) 
        + \lambda (|u^h_{n,\beta}|^2 u^h_{n,\beta} ,v^h)
        - (p_{n,\beta}^h, \nabla \cdot v^h)
        = ( \sigma \xi T_{n,\beta}, v^h), 
        \end{split}
        \\
        \begin{split}
        &(w_{n+1}^h,\varphi^h) -(\phi_{n+1}^h,\nabla \cdot \varphi^h)= (\nabla u_{n+1}^h,\nabla \varphi^h),\label{fully discrete formulations 2}
        \end{split}
        \\
        \begin{split}
        &(\nabla \cdot u_{n+1}^h, q^h) = 0, \label{fully discrete formulations 3}
        \end{split}
        \\
        \begin{split}
        &(\nabla \cdot w_{n+1}^h, \zeta^h) = 0, \label{fully discrete formulations 4}
        \end{split}
        \\
        \begin{split}
        &\frac{1}{\widehat{k}_n}(T_{n,\alpha}^h, \vartheta^h)
        +\kappa (\nabla T_{n,\beta}^h, \nabla \vartheta^h)
        +b^*(u_{n,\beta}^h,  T_{n,\beta}^h, \vartheta^h)
        = 0. \label{fully discrete formulations 5}
        \end{split}
        \end{align} 
    \end{sche}
    In the beginning, we set $u_0^h = \mathcal{S}_h u_0,  T^h_0 = \mathcal{R}_h T_0$ and employ the fully-implicit Crank-Nicolson scheme to solve for $u_1^h, T_1^h$, which ensures second-order accuracy and preserves numerical stability.

    \subsection{Stability Analysis}
    We define the discrete model energy of \Cref{fully discrete formulations scheme} at time $t_n$
    \begin{align*}
        \mathcal{E}_n = \frac{1}{2} \big( \| u_{n}^h \|^2 + \| T_n^h \|^2 \big),
    \end{align*}
    and prove the boundedness of discrete model energy by the $G$-stability property of the family of DLN methods \eqref{eq:1leg-DLN} in this subsection. 
    \begin{defn}
        For $\theta \in [0 , 1]$ and  $u, v \in [L^2(D)]^2$, the $G$-norm $\| \cdot \|_{G(\theta)}$ is defined as
        \begin{equation} \label{G-norm}
        \begin{Vmatrix}
        u \vspace{0.5mm} \\ 
        v
        \end{Vmatrix}^2_{G(\theta )}
        =
        \int_{D}
        \big[u^\top \ v^\top \big] G(\theta) 
        \begin{bmatrix}
        u \vspace{0.5mm} \\
        v
        \end{bmatrix}
        \mathrm{d}x
        =\frac{1}{4}(1+\theta)\|u\|^2 +\frac{1}{4}(1-\theta)\|v\|^2,
        \end{equation}
        where the notation $\top$ represents the transpose of a vector, $G(\theta)$ is a symmetric semi-positive definite matrix 
        \[
        G(\theta) = \begin{bmatrix}
        \frac{1}{4}(1+\theta)\mathbb{I}_{2\times 2} & 0 \\
        0 & \frac{1}{4}(1-\theta)\mathbb{I}_{2\times 2}
        \end{bmatrix},
        \]
    \end{defn}
    and $\mathbb{I}_{2\times 2}$ identity matrix in $\mathbb{R}^{2 \times 2}$
    \begin{lem} \label{G-stable-lemma}
        For any sequence $\{ v_{n} \}_{n=0}^{M} \subset [L^2(D)]^2$, the following identity holds
        \begin{equation}\label{G-stable}
        \big( v_{n,\alpha}, v_{n,\beta} \big)  
        =
        \begin{Vmatrix}
        v_{n+1} \vspace{0.5mm} \\
        v_n
        \end{Vmatrix}^2_{G(\theta )}
        -
        \begin{Vmatrix}
        v_{n} \vspace{0.5mm} \\
        v_{n-1}
        \end{Vmatrix}^2_{G(\theta )}
        +
        \Big\| \sum_{\ell=0}^{2} a_{\ell}^{(n)} v_{n-1+{\ell}} \Big\|^2,
        \end{equation}
        holds for all $n = 1,2, \cdots, M-1$ and any fixed $\theta \in [0,1]$. Here $\{a_{\ell}^{(n)}\}_{\ell=0}^2$ 
        in \eqref{G-stable} are 
        \begin{equation*} 
        a_1^{(n)}  = -\frac{\sqrt{\theta (1-\theta^2)}}{\sqrt{2}(1+\varepsilon_n \theta)},  \quad
        a_2^{(n)} = -\frac{1-\varepsilon_n}{2} a_1^{(n)} ,   \quad
        a_0^{(n)} = -\frac{1+\varepsilon_n}{2} a_1^{(n)}.   
        \end{equation*}
    \end{lem}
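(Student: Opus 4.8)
The plan is to reduce~\eqref{G-stable} to a finite-dimensional algebraic identity and verify it by direct computation, following the classical $G$-stability argument of~\cite{dahlquist1983stability}. Because the $L^2$ inner product and the matrix $G(\theta)$ act componentwise on the two Cartesian coordinates of the vector fields, it suffices to establish the corresponding scalar identity for an arbitrary triple $z_{n-1}, z_n, z_{n+1} \in \mathbb{R}$ and then sum over the two components and integrate over $D$. So first I would introduce the coefficient vectors $\alpha = (\alpha_0,\alpha_1,\alpha_2)^\top$ and $\beta^{(n)} = (\beta_0^{(n)},\beta_1^{(n)},\beta_2^{(n)})^\top$ from~\eqref{DLNcoeff}, the vector $a^{(n)} = (a_0^{(n)},a_1^{(n)},a_2^{(n)})^\top$, and $\mathbf{z} = (z_{n-1},z_n,z_{n+1})^\top$, and rewrite the scalar version of each term as a quadratic form in $\mathbf{z}$: the left side is $\mathbf{z}^\top \mathcal{S}\,\mathbf{z}$ with $\mathcal{S} := \tfrac12\big(\alpha (\beta^{(n)})^\top + \beta^{(n)}\alpha^\top\big)$; the telescoped $G$-norm difference is $\mathbf{z}^\top \mathcal{D}\,\mathbf{z}$ with $\mathcal{D} := \operatorname{diag}\!\big(-\tfrac14(1-\theta),\,-\tfrac12\theta,\,\tfrac14(1+\theta)\big)$ by~\eqref{G-norm}; and the remainder term is $\mathbf{z}^\top\big(a^{(n)}(a^{(n)})^\top\big)\mathbf{z}$. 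Thus~\eqref{G-stable} is equivalent to the single symmetric $3\times 3$ matrix identity $\mathcal{S} - \mathcal{D} = a^{(n)}(a^{(n)})^\top$.

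Next I would spell out the six scalar equations obtained by comparing entries: on the diagonal $\alpha_2\beta_2^{(n)} - \tfrac14(1+\theta) = (a_2^{(n)})^2$, $\alpha_1\beta_1^{(n)} + \tfrac12\theta = (a_1^{(n)})^2$, $\alpha_0\beta_0^{(n)} + \tfrac14(1-\theta) = (a_0^{(n)})^2$; off the diagonal $\tfrac12\big(\alpha_i\beta_j^{(n)} + \alpha_j\beta_i^{(n)}\big) = a_i^{(n)}a_j^{(n)}$ for the three pairs $(i,j)$. Substituting~\eqref{DLNcoeff} and the stated $a_\ell^{(n)}$, each becomes a rational identity in $\theta,\varepsilon_n$ with common denominator $(1+\varepsilon_n\theta)^2$, which after clearing denominators is a polynomial identity one checks directly; the bookkeeping is eased by noting that every $a_\ell^{(n)}$ is a scalar multiple of $a_1^{(n)}$, so all right-hand sides are $(a_1^{(n)})^2 = \theta(1-\theta^2)/\big(2(1+\varepsilon_n\theta)^2\big)$ times elementary factors. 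Alternatively, one can avoid checking all six entries by first showing $\mathcal{S} - \mathcal{D}$ has rank at most one (its $2\times2$ minors vanish), which determines $a^{(n)}$ up to sign from any nonzero row, with the sign and semidefiniteness coming from one diagonal entry; here $(a_1^{(n)})^2 \ge 0$ since $\theta(1-\theta^2)\ge 0$ on $[0,1]$ and $1+\varepsilon_n\theta > 0$ because $|\varepsilon_n|<1$ and $\theta\in[0,1]$, so $a_1^{(n)}$ (hence the whole construction) is real.

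The hard part will be the algebraic bookkeeping introduced by the quadratic dependence of $\beta^{(n)}$ on $\varepsilon_n$ through $(1+\varepsilon_n\theta)^{-2}$: the off-diagonal identities are the messiest, and it is easy to slip a sign when symmetrizing $\alpha(\beta^{(n)})^\top$. I would contain this by factoring $a_1^{(n)}$ out of every occurrence of $a^{(n)}$ and by first verifying the uniform-grid case $\varepsilon_n = 0$, where the relation collapses to the classical constant-step one-leg $G$-stability identity, before doing general $\varepsilon_n$. Once $\mathcal{S} - \mathcal{D} = a^{(n)}(a^{(n)})^\top$ is established, applying it to each Cartesian component of $v_{n-1}(x), v_n(x), v_{n+1}(x)$, summing the two components, and integrating over $D$ gives~\eqref{G-stable}, with the last term manifestly nonnegative as $\big\|\sum_{\ell=0}^{2} a_\ell^{(n)} v_{n-1+\ell}\big\|^2$.
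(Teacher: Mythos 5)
Your proposal is correct and matches the paper's approach: the paper's proof consists of the single remark that \eqref{G-stable} is ``just an algebraic calculation,'' and your reduction to the symmetric $3\times 3$ matrix identity $\tfrac12\big(\alpha(\beta^{(n)})^\top+\beta^{(n)}\alpha^\top\big)-\mathcal{D}=a^{(n)}(a^{(n)})^\top$ is precisely that calculation, organized cleanly (e.g.\ the $(2,2)$ entry gives $\alpha_2\beta_2^{(n)}-\tfrac14(1+\theta)=\theta(1-\theta^2)(1-\varepsilon_n)^2/\big(8(1+\varepsilon_n\theta)^2\big)=(a_2^{(n)})^2$, and the remaining entries check out the same way). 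No gaps.
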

    \begin{proof}
        The proof of $G$-stability identity in \eqref{G-stable} is just a algebraic calculation. 
    \end{proof}

    \begin{thm}[Boundedness] \label{Stability of DLN method}
        Assume that $ u_0^h, u_1^h \in X_h, T_0^h, T_1^h \in J_h$ and time steps satisfy
        \begin{align} \label{time-cond-stab}
        C_{\beta}^{(n)} \Big( |\rho|+ \frac{C_P^2 \sigma^2 | \xi |^2}{2\mu} \Big)\widehat{k}_{n} < \frac{1+\theta}{4}, \quad \forall \ n = 1, \cdots M-1, 
        \end{align}
        where $\displaystyle C_{\beta}^{(n)} = \sum_{\ell = 0}^{2} \big( \beta_{\ell}^{(n)} \big)^{2}$ and $C_P$ is the constant in Poincar\'e inequality.
        Scheme \Cref{fully discrete formulations scheme} is bounded in the discrete model energy, i.e. for $ 1 <m \leq M $, 
        \begin{align} \label{Stability of DLN inequality}
        \begin{split}
            &\frac{1+\theta}{2} \mathcal{E}_{m}
            +
            \sum_{n=1}^{m-1} \Big[ \Big\| \sum_{\ell=0}^{2} a_{\ell}^{(n)} u^h_{n-1+ \ell} \Big\|^2 
            + \Big\| \sum_{\ell=0}^{2} a_{\ell}^{(n)} T^h_{n-1+ \ell} \Big\|^2 
            \\&
            \qquad \qquad \qquad \ \ \  + \widehat{k}_n \Big( \frac{\mu}{2} \| \nabla u^h_{n,\beta}\|^2
            + \gamma \| w_{n,\beta}^h \|^2 + \lambda \| u^h_{n,\beta} \|_{L^4}^4+ \kappa \| \nabla T^h_{n,\beta} \|^2 \Big) \Big]
            \\
            &\leq
            \exp \big( C(\theta) T \big) \Big(  \frac{1+\theta}{2} \mathcal{E}_{1} + \frac{1-\theta}{2} \mathcal{E}_{0} \Big). 
        \end{split}
        \end{align}
    \end{thm}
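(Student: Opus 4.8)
The plan is to test the fully discrete system \eqref{fully discrete formulations 1}--\eqref{fully discrete formulations 5} with the $\beta$-weighted discrete solution, turn the leading terms into a telescoping $G$-norm sum by way of \Cref{G-stable-lemma}, absorb the indefinite lower-order contributions with Poincar\'e and Young inequalities, and close with a discrete Gr\"onwall argument in which the time restriction \eqref{time-cond-stab} is exactly what controls the implicit ``future'' energy. Concretely, first take $v^h = u_{n,\beta}^h$ in \eqref{fully discrete formulations 1} and $\vartheta^h = T_{n,\beta}^h$ in \eqref{fully discrete formulations 5}; since \eqref{fully discrete formulations 2}--\eqref{fully discrete formulations 4} hold at every time level, form their $\beta$-weighted combinations and test \eqref{fully discrete formulations 2} with $\varphi^h = w_{n,\beta}^h$, \eqref{fully discrete formulations 3} with $q^h = p_{n,\beta}^h$, and \eqref{fully discrete formulations 4} with $\zeta^h = \phi_{n,\beta}^h$. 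Skew-symmetry of the trilinear forms removes the convective terms, $b(u_{n,\beta}^h, u_{n,\beta}^h, u_{n,\beta}^h) = b^{\ast}(u_{n,\beta}^h, T_{n,\beta}^h, T_{n,\beta}^h) = 0$; the discrete divergence constraints give $(p_{n,\beta}^h, \nabla\cdot u_{n,\beta}^h) = 0$ and $(\phi_{n,\beta}^h, \nabla\cdot w_{n,\beta}^h) = 0$, so the auxiliary relation collapses to $(\nabla u_{n,\beta}^h, \nabla w_{n,\beta}^h) = \|w_{n,\beta}^h\|^2$ and the coupling term becomes $\gamma\|w_{n,\beta}^h\|^2 \geq 0$; and the Landau term equals $\lambda\|u_{n,\beta}^h\|_{L^4}^4 \geq 0$. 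Multiplying by $\widehat{k}_n$ and applying \Cref{G-stable-lemma} to $(u_{n,\alpha}^h, u_{n,\beta}^h)$ and to $(T_{n,\alpha}^h, T_{n,\beta}^h)$ converts the leading terms into differences of $G$-norms plus the nonnegative jump terms $\|\sum_{\ell=0}^{2} a_{\ell}^{(n)} u_{n-1+\ell}^h\|^2$ and $\|\sum_{\ell=0}^{2} a_{\ell}^{(n)} T_{n-1+\ell}^h\|^2$; adding the two resulting identities, the only terms carried to the right-hand side are the buoyancy pairing $\widehat{k}_n\sigma(\xi T_{n,\beta}^h, u_{n,\beta}^h)$ and $-\widehat{k}_n\rho\|u_{n,\beta}^h\|^2$.

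Next, estimate $\widehat{k}_n\sigma(\xi T_{n,\beta}^h, u_{n,\beta}^h)$ by Cauchy--Schwarz, the Poincar\'e inequality $\|u_{n,\beta}^h\| \leq C_P\|\nabla u_{n,\beta}^h\|$, and Young's inequality, which yields $\tfrac12\widehat{k}_n\mu\|\nabla u_{n,\beta}^h\|^2 + \tfrac{1}{2\mu}\widehat{k}_n C_P^2\sigma^2|\xi|^2\|T_{n,\beta}^h\|^2$; move the gradient part back to the left, keeping $\tfrac{\mu}{2}\|\nabla u_{n,\beta}^h\|^2$ there. Using $-\rho\|u_{n,\beta}^h\|^2 \leq |\rho|\,\|u_{n,\beta}^h\|^2$ together with the Cauchy--Schwarz bounds $\|u_{n,\beta}^h\|^2 \leq C_{\beta}^{(n)}\sum_{\ell=0}^{2}\|u_{n-1+\ell}^h\|^2$ and $\|T_{n,\beta}^h\|^2 \leq C_{\beta}^{(n)}\sum_{\ell=0}^{2}\|T_{n-1+\ell}^h\|^2$, the right-hand side of the per-step identity is bounded by $2\widehat{k}_n C_{\beta}^{(n)}\big(|\rho| + \tfrac{C_P^2\sigma^2|\xi|^2}{2\mu}\big)\big(\mathcal{E}_{n-1} + \mathcal{E}_n + \mathcal{E}_{n+1}\big)$.

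Finally, sum over $n = 1, \dots, m-1$. With $\mathcal{G}_j := \|(u_j^h,u_{j-1}^h)\|_{G(\theta)}^2 + \|(T_j^h,T_{j-1}^h)\|_{G(\theta)}^2 = \tfrac{1+\theta}{2}\mathcal{E}_j + \tfrac{1-\theta}{2}\mathcal{E}_{j-1}$, the $G$-norm terms telescope to $\mathcal{G}_m - \mathcal{G}_1$, and $\mathcal{G}_1 = \tfrac{1+\theta}{2}\mathcal{E}_1 + \tfrac{1-\theta}{2}\mathcal{E}_0$ is precisely the quantity appearing on the right of \eqref{Stability of DLN inequality}. The only right-hand-side term involving $\mathcal{E}_m$ comes from the $n = m-1$ summand and carries the coefficient $2\widehat{k}_{m-1}C_{\beta}^{(m-1)}\big(|\rho| + \tfrac{C_P^2\sigma^2|\xi|^2}{2\mu}\big)$, which by \eqref{time-cond-stab} is strictly below $\tfrac{1+\theta}{2}$; since $\mathcal{E}_m \leq \tfrac{2}{1+\theta}\mathcal{G}_m$, it can be absorbed into $\mathcal{G}_m$ on the left at the cost of a factor $(1-\mu_{m-1})$ with $\mu_{m-1} < 1$ bounded away from $1$ uniformly over the given finite partition. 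Dividing out that factor and bounding the remaining lower-order terms by $\mathcal{E}_j \leq \tfrac{2}{1+\theta}\mathcal{G}_j$ brings the inequality into the form $\mathcal{G}_m + (\text{jump and dissipation sums}) \leq C\,\mathcal{G}_1 + \sum_{n=1}^{m-1}\widehat{k}_n c_n\,\mathcal{G}_n$ with $\sum_n \widehat{k}_n c_n \leq C(\theta)\,T$; a discrete Gr\"onwall inequality of accumulated type (one that keeps the nonnegative sums on the left), together with $\mathcal{G}_m \geq \tfrac{1+\theta}{2}\mathcal{E}_m$, then gives \eqref{Stability of DLN inequality}.

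The main obstacle is precisely this absorption step. Because the DLN scheme evaluates the nonlinearities and the dissipation at the $\beta$-average, the indefinite terms $\|u_{n,\beta}^h\|^2$ and $\|T_{n,\beta}^h\|^2$ involve the as-yet-unbounded $u_{n+1}^h$ and $T_{n+1}^h$, so at $n = m-1$ the energy $\mathcal{E}_m$ reappears on the right-hand side; the restriction \eqref{time-cond-stab} is calibrated against the $G$-norm weight $\tfrac{1+\theta}{4}$ exactly so that this ``implicit'' contribution can be transferred to the left while keeping its coefficient positive. A subsidiary technical point is setting up the discrete Gr\"onwall so that the jump and dissipation sums are retained on the left-hand side rather than only the energy $\mathcal{E}_m$. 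The remaining manipulations -- skew-symmetry, the divergence and pressure cancellations, and the Poincar\'e/Young estimates -- are routine.
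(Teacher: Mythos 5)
Your proposal is correct and follows essentially the same route as the paper's proof: testing with the $\beta$-weighted discrete solutions, using skew-symmetry and the divergence constraints to cancel the convective, pressure, and coupling terms, invoking the $G$-stability identity, controlling the buoyancy term via Poincar\'e and Young, and closing with the $C_{\beta}^{(n)}$ splitting plus discrete Gr\"onwall, with \eqref{time-cond-stab} absorbing the implicit $\mathcal{E}_m$ contribution from the $n=m-1$ summand. Your exposition of the absorption step is in fact more explicit than the paper's, but the argument is the same.
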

    \begin{proof}
        By \eqref{fully discrete formulations 2} and \eqref{fully discrete formulations 4},
        \begin{align*}
        (w_{n,\beta}^h,\varphi^h) - (\phi_{n,\beta}^h,\nabla \cdot \varphi^h) 
        &= (\nabla u_{n,\beta}^h,\nabla \varphi^h), 
        \quad \forall \varphi^h \in X_h, \\
        (\nabla \cdot w_{n,\beta}^h, \zeta^h) &= 0, 
        \quad \forall \zeta^h \in Q_h.
        \end{align*}
        Choosing $\varphi^h = w_{n,\beta}^h$ and $\zeta^h = \phi_{n,\beta}^h$ in the above two equations gives
        \begin{align}
            (w_{n,\beta}^h, w_{n,\beta}^h) = (\nabla u_{n,\beta}^h,\nabla w_{n,\beta}^h).
            \label{eq:Stab-eq1}
        \end{align}
        Similarly, we set $q^h = p^h_{n,\beta}$ in \eqref{fully discrete formulations 3} and obtain
        \begin{align}
            (\nabla \cdot u_{n,\beta}^h, p^h_{n,\beta})=0.
            \label{eq:Stab-eq2}
        \end{align}
        We set 
        $v^h = u^h_{n,\beta}$ in \eqref{fully discrete formulations 1}, 
        $\vartheta^h = T^h_{n,\beta}$ in \eqref{fully discrete formulations 5}, 
        add the two equations together, and use \eqref{eq:Stab-eq1} - \eqref{eq:Stab-eq2} to derive
        \begin{align*}
        &\frac{1}{\widehat{k}_n} (u_{n,\alpha}^h, u^h_{n,\beta}) 
        + \mu \| \nabla u^h_{n,\beta} \|^2 + \gamma \| w_{n,\beta}^h \|^2
        +\rho \| u^h_{n,\beta} \|^2 + \lambda \| u^h_{n,\beta} \|_{L^4}^4 
        + \nu b(u_{n,\beta}^h, u_{n,\beta}^h, u^h_{n,\beta}) \notag \\
        &
        +\frac{1}{\widehat{k}_n} (T_{n,\alpha}^h, T^h_{n,\beta}) 
        + \kappa \| \nabla T^h_{n,\beta} \|^2 
        +b^*(u_{n,\beta}^h, T_{n,\beta}^h, T^h_{n,\beta})
        = (\sigma \xi T_{n,\beta}^h, u^h_{n,\beta}).
        \end{align*}
        Since $b(u_{n,\beta}^h, u_{n,\beta}^h, u^h_{n,\beta}) =0$ and $b^*(u_{n,\beta}^h, T_{n,\beta}^h, T^h_{n,\beta}) =0$ by definition, we apply the Cauchy-Schwarz inequality and Poincar\'e inequality to the above equality and derive
        \begin{align*}
        &\frac{1}{\widehat{k}_n} (u_{n,\alpha}^h, u^h_{n,\beta}) 
        + \mu \| \nabla u^h_{n,\beta} \|^2 
        + \gamma \| w_{n,\beta}^h \|^2
        +\rho \| u^h_{n,\beta} \|^2
        + \lambda \| u^h_{n,\beta} \|_{L^4}^4
        \\
        &+ \frac{1}{\widehat{k}_n} (T_{n,\alpha}^h, T^h_{n,\beta}) 
        + \kappa \| \nabla T^h_{n,\beta} \|^2 
        \leq C_P \sigma |\xi | \|T_{n,\beta}^h \| \|\nabla u^h_{n,\beta}\|.
        \end{align*}
        We apply $G$-stability identity in \eqref{G-stable} and Young's inequality to the above inequality 
        \begin{align*}
            &
            \begin{Vmatrix}
                u^h_{n+1} \vspace{0.5mm} \\ 
                u^h_{n}
            \end{Vmatrix}^2_{G(\theta )}
            \!\!\!-\! 
            \begin{Vmatrix}
                u^h_{n} \vspace{0.5mm} \\ 
                u^h_{n-1}
            \end{Vmatrix}^2_{G(\theta )}
            \!\!\!+\!\! \bigg\|  \sum_{\ell=0}^{2} a_{\ell}^{(n)} u^h_{n-1+\ell} \bigg\|^2
            \!\!\!+\! \frac{\mu \widehat{k}_n}{2} \| \nabla u^h_{n,\beta} \|^2 
            \!+\! \rho \widehat{k}_n \| u^h_{n,\beta} \|^2 
            \!+\! \lambda \widehat{k}_n \| u^h_{n,\beta} \|_{L^4}^4
            \\
            &
            \begin{Vmatrix}
                T^h_{n+1} \vspace{0.5mm} \\ 
                T^h_{n}
            \end{Vmatrix}^2_{G(\theta )}
            \!\!\!-\! 
            \begin{Vmatrix}
                T^h_{n} \vspace{0.5mm} \\ 
                T^h_{n-1}
            \end{Vmatrix}^2_{G(\theta )}
            \!\!\!+\! \bigg\| \sum_{\ell=0}^{2} a_{\ell}^{(n)} T^h_{n-1+\ell} \bigg\|^2
            \!+\! \kappa \widehat{k}_n \| \nabla T^h_{n,\beta} \|^2 
            \!+\! \gamma \widehat{k}_n \| w_{n,\beta}^h \|^2 \\
            &\leq \frac{C_P^2}{2 \mu} \widehat{k}_n \sigma^2 | \xi |^2 \| T_{n,\beta}^h \|^2.
        \end{align*}
        We apply the definition of the $G$-norm in \eqref{G-norm}, and sum the above inequality over $n$ from $1$ to $m-1$.
        \begin{align*}
        \begin{split}
        &\frac{1+\theta}{2} \mathcal{E}_{m}
        +
        \sum_{n=1}^{m-1} \Big[ \Big\| \sum_{\ell=0}^{2} a_{\ell}^{(n)} u^h_{n-1+ \ell} \Big\|^2 
        + \Big\| \sum_{\ell=0}^{2} a_{\ell}^{(n)} T^h_{n-1+ \ell} \Big\|^2 
        \\&
        \qquad \qquad \qquad \ \ \ + \widehat{k}_n \Big( \frac{\mu}{2} \| \nabla u^h_{n,\beta}\|^2
        + \gamma \| w_{n,\beta}^h \|^2 + \lambda \| u^h_{n,\beta} \|_{L^4}^4+ \kappa \| \nabla T^h_{n,\beta} \|^2 \Big) \Big]
        \\
        \leq& C_{\beta}^{(m-1)} |\rho| \widehat{k}_{m-1} \big(  \| u^h_{m} \|^{2} + \| u^h_{m-1} \|^{2} + \| u^h_{m-2} \|^{2}  \big) + \sum_{n=1}^{m-2} \widehat{k}_n |\rho| \| u^h_{n,\beta} \|^2 
        \\
        & + \frac{C_P^2 C_{\beta}^{(m-1)} \sigma^2 | \xi |^2}{2 \mu} \widehat{k}_{m-1} \big(  \| T^h_{m} \|^{2} + \| T^h_{m-1} \|^{2} + \| T^h_{m-2} \|^{2}  \big) + \sum_{n=1}^{m-2} \frac{C_P^2 \widehat{k}_n\sigma^2 | \xi|^2}{2\mu}\| T^h_{n,\beta} \|^2 
        \\
        &+ \frac{1+\theta}{2} \mathcal{E}_{1} + \frac{1-\theta}{2} \mathcal{E}_{0}.
        \end{split}
        \end{align*}
        We apply discrete Gronwall’s inequality \cite[p.369]{MR1043610} to above inequality to achieve \eqref{Stability of DLN inequality}.
    \end{proof}

    \section{Error analysis}
    \label{sec:sec4}
    Throughtout this section, we denote $u_{n}$, $w_{n}$, $\phi_{n}$, $p_{n}$, $T_n$ to be the true solutions of velocity $u$, auxiliary variable $w$, auxiliary variable $\phi$,  pressure $p$, temperature $T$ at time $t_{n}$ respectively. 
    We decompose the error functions for velocity $e^u_n$, auxiliary variable $e^w_n$, pressure $e^p_n$ and temperature $e^T_n$ at time $t_{n}$ as:
    \begin{align*}
    &e^u_n =u_n^h - u(t_{n})  = (u_n^h - \mathcal{S}_h u(t_{n})) - (u_n - \mathcal{S}_h u(t_{n})) :=\psi^u_n - \eta^u_n,
    \\
    &e^w_n =w_n^h - w(t_{n}) = (w_n^h - \mathcal{S}_h w(t_{n})) - (w_n - \mathcal{S}_h w(t_{n})) := \psi^w_n - \eta^w_n ,
    \\
    &e^p_n =p_n^h - p(t_{n})=  (p_n^h - \mathcal{S}_h p(t_{n})) - (p_n - \mathcal{S}_h p(t_{n})) := \psi^p_n -\eta^p_n,
    \\
    &e^T_n =T_n^h - T(t_{n})=  (T_n^h - \mathcal{R}_h T(t_{n})) - (T(t_{n}) - \mathcal{R}_h T(t_{n})) = \psi^T_n -\eta^T_n
    \end{align*} 
    where $\mathcal{R}_h$ represents Ritz projection in \eqref{Ritz-projection-defination} and $\mathcal{S}_h$ the Stokes-type projection in \eqref{Stokes-type-projection-defination} respectively.
    The following lemma about the consistency property of the family of DLN methods is necessary in error analysis. 
    \begin{lemma}[consistency] \label{n beta int tn-1 tn+1}
        Let $u(\cdot, t)$ be the mapping from $[0,T]$ to $H^{r}(D)$. Assuming that the mapping $u(\cdot, t)$ is third-order differentiable in time, then for any $\theta \in [0,1)$
        \begin{align*}
        &\big\| u_{n,\beta} - u(t_{n,\beta}) \big\|_{r}^2
        \leq
        C(\theta)(k_n+k_{n-1})^3 \int_{t_{n-1}}^{t_{n+1}}\|u_{tt}\|_{r}^2 \mathrm{d}t, \\
        &\Big\| \frac{u_{n,\alpha}}{\widehat{k}_n} - u_t(t_{n,\beta}) \Big\|_{r}^2
        \leq
        C(\theta) (k_n+k_{n-1})^3 \int_{t_{n-1}}^{t_{n+1}}\|u_{ttt}\|_{r}^2 \mathrm{d}t.
        \end{align*}
        For $\theta = 1$, the family of DLN methods reduces to the midpoint rule. We have 
        \begin{align*}
        &\big\| u_{n,\beta} - u(t_{n,\beta}) \big\|_{r}^2
        = \Big\| \frac{u_{n+1} + u_{n}}{2} - u \Big(\frac{t_{n+1} + t_{n}}{2} \Big) \Big\|_{r}
        \leq
        C k_n^3 \int_{t_{n-1}}^{t_{n+1}}\|u_{tt}\|_{r}^2 \mathrm{d}t, \\
        &\Big\| \frac{u_{n,\alpha}}{\widehat{k}_n} - u_t(t_{n,\beta}) \Big\|_{r}^2
        = \Big\| \frac{u_{n+1} - u_{n}}{k_n} - u_{t} \Big(\frac{t_{n+1} + t_{n}}{2} \Big) \Big\|_{r}
        \leq C k_n^3 \int_{t_{n-1}}^{t_{n+1}}\|u_{ttt}\|_{r}^2 \mathrm{d}t.
        \end{align*}
    \end{lemma}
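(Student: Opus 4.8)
The plan is to expand the exact solution by Taylor's theorem with integral remainder about the time $t_{n,\beta} = \beta_2^{(n)} t_{n+1} + \beta_1^{(n)} t_n + \beta_0^{(n)} t_{n-1}$ and to exploit the moment conditions satisfied by the DLN coefficients in \eqref{DLNcoeff}. From the explicit formulas one checks the algebraic identities $\sum_{\ell=0}^{2}\beta_\ell^{(n)} = 1$, $\sum_{\ell=0}^{2}\alpha_\ell = 0$, $\sum_{\ell=0}^{2}\alpha_\ell t_{n-1+\ell} = \widehat{k}_n$, and $\sum_{\ell=0}^{2}\alpha_\ell (t_{n-1+\ell}-t_{n,\beta})^2 = 0$, the last being precisely the second-order consistency condition that motivates the construction of $\widehat{k}_n$ and of the weights $\beta_\ell^{(n)}$ (cf. \cite{dahlquist1983stability,LPT21_AML}). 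I will also use the elementary bound $\widehat{k}_n = \tfrac{1+\theta}{2}k_n + \tfrac{1-\theta}{2}k_{n-1} \ge \tfrac{1-\theta}{2}(k_n+k_{n-1})$ for $\theta\in[0,1)$, together with the uniform boundedness of $\sum_\ell|\alpha_\ell|$ and $\sum_\ell|\beta_\ell^{(n)}|$, to absorb all prefactors into the constant $C(\theta)$.

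For the first estimate I would write $u(t_{n-1+\ell}) = u(t_{n,\beta}) + u_t(t_{n,\beta})(t_{n-1+\ell}-t_{n,\beta}) + \int_{t_{n,\beta}}^{t_{n-1+\ell}}(t_{n-1+\ell}-s)\,u_{tt}(s)\,\mathrm{d}s$, multiply by $\beta_\ell^{(n)}$ and sum over $\ell$. The zeroth-order term reproduces $u(t_{n,\beta})$ since $\sum_\ell\beta_\ell^{(n)} = 1$, and the first-order term vanishes because $\sum_\ell\beta_\ell^{(n)}(t_{n-1+\ell}-t_{n,\beta}) = t_{n,\beta}-t_{n,\beta} = 0$; hence $u_{n,\beta}-u(t_{n,\beta})$ is the $\beta$-weighted sum of integral remainders. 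Taking the $H^r$-norm, using $|t_{n-1+\ell}-s|\le k_n+k_{n-1}$ and that each integration interval lies in $[t_{n-1},t_{n+1}]$, and then Cauchy--Schwarz in $s$, gives $\|u_{n,\beta}-u(t_{n,\beta})\|_r \le C(\theta)(k_n+k_{n-1})^{3/2}\big(\int_{t_{n-1}}^{t_{n+1}}\|u_{tt}\|_r^2\,\mathrm{d}t\big)^{1/2}$; squaring yields the claim.

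For the second estimate I would carry the expansion one order further, $u(t_{n-1+\ell}) = u(t_{n,\beta}) + u_t(t_{n,\beta})(t_{n-1+\ell}-t_{n,\beta}) + \tfrac12 u_{tt}(t_{n,\beta})(t_{n-1+\ell}-t_{n,\beta})^2 + \tfrac12\int_{t_{n,\beta}}^{t_{n-1+\ell}}(t_{n-1+\ell}-s)^2 u_{ttt}(s)\,\mathrm{d}s$, multiply by $\alpha_\ell$ and sum. The $u(t_{n,\beta})$-term drops since $\sum_\ell\alpha_\ell = 0$; the $u_t$-term equals $\widehat{k}_n\,u_t(t_{n,\beta})$ since $\sum_\ell\alpha_\ell(t_{n-1+\ell}-t_{n,\beta}) = \widehat{k}_n$; and the $u_{tt}$-term vanishes by $\sum_\ell\alpha_\ell(t_{n-1+\ell}-t_{n,\beta})^2 = 0$. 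Dividing by $\widehat{k}_n$ leaves $\tfrac{u_{n,\alpha}}{\widehat{k}_n}-u_t(t_{n,\beta}) = \tfrac{1}{2\widehat{k}_n}\sum_\ell\alpha_\ell\int_{t_{n,\beta}}^{t_{n-1+\ell}}(t_{n-1+\ell}-s)^2 u_{ttt}(s)\,\mathrm{d}s$; bounding the remainders with $|t_{n-1+\ell}-s|^2\le(k_n+k_{n-1})^2$ and Cauchy--Schwarz, then using $\widehat{k}_n\ge\tfrac{1-\theta}{2}(k_n+k_{n-1})$ to control $1/\widehat{k}_n$, gives the stated bound after squaring. The endpoint $\theta=1$ is handled separately: there $t_{n,\beta}=(t_{n+1}+t_n)/2$, $\widehat{k}_n=k_n$, $u_{n,\beta}=(u_{n+1}+u_n)/2$, $u_{n,\alpha}/\widehat{k}_n=(u_{n+1}-u_n)/k_n$, and the same two Taylor expansions about the midpoint (with only the nodes $t_n,t_{n+1}$) give the bounds with an explicit constant.

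I expect the only real obstacle to be bookkeeping: verifying the moment identities --- above all $\sum_\ell\alpha_\ell(t_{n-1+\ell}-t_{n,\beta})^2 = 0$ --- directly from \eqref{DLNcoeff} for arbitrary $\varepsilon_n\in(-1,1)$, and tracking the dependence of $C(\theta)$ on $\theta$ (it degenerates as $\theta\to1$ through the $1/\widehat{k}_n$ factor, which is exactly why that case is treated on its own).
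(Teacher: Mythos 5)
Your argument is correct and is the standard consistency proof for the DLN one-leg family: Taylor expansion with integral remainder about $t_{n,\beta}$, the moment identities $\sum_{\ell}\alpha_\ell=0$, $\sum_{\ell}\alpha_\ell(t_{n-1+\ell}-t_{n,\beta})=\widehat{k}_n$, $\sum_{\ell}\alpha_\ell(t_{n-1+\ell}-t_{n,\beta})^2=0$ (which do hold for the coefficients \eqref{DLNcoeff} for every $\varepsilon_n\in(-1,1)$), Cauchy--Schwarz, and $\widehat{k}_n=\tfrac{1+\varepsilon_n\theta}{2}(k_n+k_{n-1})\ge\tfrac{1-\theta}{2}(k_n+k_{n-1})$; the paper itself gives no proof but defers to \cite[Appendix B]{CLPX2025_JSC}, where exactly this route is taken. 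The only point worth recording explicitly in a write-up is that $t_{n,\beta}\in[t_{n-1},t_{n+1}]$ and $|\beta_\ell^{(n)}|\le C(\theta)$ uniformly in $\varepsilon_n$, so that the remainder integrals are indeed controlled by $\int_{t_{n-1}}^{t_{n+1}}$ with a constant depending only on $\theta$.
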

    \begin{proof}
        See \cite[Appendix B.]{CLPX2025_JSC} for the complete proof.
    \end{proof}

    \subsection{Error estimates for velocity and temperature in $L^{2}$-norm} 
    \ 
    \begin{assumption}\label{u_p_T_space}
        We assume that the exact solution $(u,w,p,T)$ in \eqref{TDAFEs} and the external body force $f$ satisfy the following regularity assumptions:
        \begin{align*}
        &u \in W^{3,2}\big([0,T]; [H^{r+2}(D)]^2 \big) \cap L^{\infty}\big([0,T]; [H^{r+2}(D)]^2 \big),
        \\
        &w \in W^{2,2}\big([0,T]; [H^{r+2}(D)]^2 \big) \cap L^{\infty}\big([0,T]; [H^{r+2}(D)]^2 \big),
        \\
        &p \in W^{2,2}\big([0,T]; H^{r+1}(D) \big) \cap L^\infty\big([0,T]; H^{r+1}(D)\big)
        \\
        &\phi  \in W^{2,2}\big([0,T]; H^{r+1}(D) \big) \cap L^\infty\big([0,T]; H^{r+1}(D)\big)
        \\
        &T  \in W^{2,2}\big([0,T]; H^{r+2}(D) \big) \cap L^\infty\big([0,T]; H^{r+2}(D)\big).
        \end{align*}
    \end{assumption}
    \begin{assumption}[Time step constraint] \label{time_condition_2}
        We assume that the weighted average time step $\widehat{k}_{n}$ satisfies: 
        for all $1 \leq n \leq M-1$ 
        \begin{align}
        \label{time_condition_eq2}
        C_{\beta}^{(n)} \widehat{k}_{n} \Big[ \Big(|\rho| +
        \frac{ 2{C^{\ast}}^{4} \nu^{4}}{\mu^{3}}  \|\nabla u_{n,\beta}\|^4
        +\frac{2 {C^{\ast}}^{4} }{9 \kappa^2 \mu} \|\nabla T_{n,\beta} \|^4
        \Big) + \frac{C_P^2 \sigma^2 |\xi|^2 }{\mu} \Big]
        < \frac{1+\theta}{4},
        \end{align}
        where $C_{\beta}^{(n)}$ is the consatnt defined in \eqref{time-cond-stab}, $C^{\ast} > 0$ the constant $C$ in \eqref{b 0 1 1 1} and $C_P$ the constant in Poincar\'e inequality.
    \end{assumption}

    \begin{theorem}\label{Intermediate conclusion}
        Suppose that Assumptions \ref{u_p_T_space} and \ref{time_condition_2} hold. For sufficiently small $k_n>0$, there exists a constant \( C \), independent of the mesh size \( h \) and the time step \( k_n \), such that the following error estimate holds:
        \begin{align} \label{proof of max u-u^h T-T^h error}
        \begin{split}
        &\max_{0 \leq n \leq M}  \big( \| e_n^u \|^2 +  \| e_n^T \|^2 \big)
        +  \sum_{n=1}^{M-1} \widehat{k}_n \big( \|\nabla e_{n,\beta}^u \|^2 + \|\nabla e_{n,\beta}^T \|^2  \big)
        +  \sum_{n=1}^{M-1} \widehat{k}_n \| e_{n,\beta}^{w} \|^2 \\
        &\leq \mathcal{O}(h^{2r+2} + k_{\max}^4),
        \end{split}
        \end{align}
        where $\displaystyle k_{\max} = \max_{0 \leq n \leq M-1} \{ k_{n} \}$.
    \end{theorem}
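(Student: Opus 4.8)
The plan is a standard but technically heavy energy argument in the splitting $e=\psi-\eta$, with $\eta$ the projection error and $\psi$ the ``discrete minus projection'' part: use the $G$-stability identity of \Cref{G-stable-lemma} to telescope in time, close with the discrete Gronwall lemma, and finish by the triangle inequality together with the projection bounds \eqref{Stokes-type projection} and \eqref{Ritz projection}. First I would derive the error equations. Since the exact solution satisfies the weak form \eqref{variational_formula_second_1}--\eqref{variational_formula_second_5} pointwise in time, I evaluate it at the off-grid point $t_{n,\beta}$, subtract \eqref{fully discrete formulations 1}--\eqref{fully discrete formulations 5}, and collect the DLN consistency defects ($\widehat{k}_n^{-1}u_{n,\alpha}-u_t(t_{n,\beta})$, $u_{n,\beta}-u(t_{n,\beta})$, $w_{n,\beta}-w(t_{n,\beta})$, $T_{n,\beta}-T(t_{n,\beta})$) into a residual controlled by \Cref{n beta int tn-1 tn+1}; the term $\widehat{k}_n^{-1}\eta^u_{n,\alpha}$ is treated by the commutation of $\mathcal{S}_h$ with $\partial_t$ and a second application of \Cref{n beta int tn-1 tn+1} to $(u-\mathcal{S}_h u)_t$. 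Exactly as in the proof of \Cref{Stability of DLN method}, the defining relations \eqref{Stokes-type-projection-defination} together with \eqref{fully discrete formulations 2}--\eqref{fully discrete formulations 4} and the divergence-free property of the exact $u,w$ give $(\psi^w_{n,\beta},\varphi^h)-(\psi^\phi_{n,\beta},\nabla\cdot\varphi^h)=(\nabla\psi^u_{n,\beta},\nabla\varphi^h)$ and $(\nabla\cdot\psi^u_{n,\beta},q^h)=(\nabla\cdot\psi^w_{n,\beta},\zeta^h)=0$; testing with $\varphi^h=\psi^w_{n,\beta}$, $\zeta^h=\psi^\phi_{n,\beta}$, $q^h=\psi^p_{n,\beta}$ yields $\gamma(\nabla\psi^w_{n,\beta},\nabla\psi^u_{n,\beta})=\gamma\|\psi^w_{n,\beta}\|^2\ge0$ and kills the pressure-type contribution, so the auxiliary and pressure unknowns enter only through their $\eta$'s.

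Next I take $v^h=\psi^u_{n,\beta}$ and $\vartheta^h=\psi^T_{n,\beta}$ in the velocity and temperature error equations, add, multiply by $\widehat{k}_n$, and apply \Cref{G-stable-lemma} to $(\psi^u_{n,\alpha},\psi^u_{n,\beta})$ and $(\psi^T_{n,\alpha},\psi^T_{n,\beta})$. This leaves on the left the telescoping $G(\theta)$-norms, the numerical-dissipation terms $\|\sum_\ell a_\ell^{(n)}\psi^u_{n-1+\ell}\|^2+\|\sum_\ell a_\ell^{(n)}\psi^T_{n-1+\ell}\|^2$, and $\widehat{k}_n(\mu\|\nabla\psi^u_{n,\beta}\|^2+\gamma\|\psi^w_{n,\beta}\|^2+\kappa\|\nabla\psi^T_{n,\beta}\|^2+\lambda C\|e^u_{n,\beta}\|_{L^4}^4)$. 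On the right one must control: (i) the consistency residual, via Cauchy--Schwarz, Poincar\'e and Young, absorbing fractions of $\mu\|\nabla\psi^u_{n,\beta}\|^2+\kappa\|\nabla\psi^T_{n,\beta}\|^2$ and leaving $\widehat{k}_n\|\tau_n\|^2$, which after a telescoping-in-time sum over $[t_{n-1},t_{n+1}]$ and \Cref{u_p_T_space} contributes $\mathcal{O}(k_{\max}^4)$; (ii) all $\eta$-terms, via \eqref{Stokes-type projection}, \eqref{Ritz projection} and Young (those paired with $\|\nabla\eta\|$ being absorbed into the dissipation), contributing $\mathcal{O}(h^{2r+2})$ after the $\widehat{k}_n$-weighted sum; (iii) the buoyancy coupling $\sigma\xi(e^T_{n,\beta}+\text{consistency},\psi^u_{n,\beta})$, via Poincar\'e and Young, leaving $\frac{C_P^2\sigma^2|\xi|^2}{\mu}\widehat{k}_n\|\psi^T_{n,\beta}\|^2$; (iv) the advection differences $\nu[b(u^h_{n,\beta},u^h_{n,\beta},\psi^u_{n,\beta})-b(u_{n,\beta},u_{n,\beta},\psi^u_{n,\beta})]$ and $b^*(u^h_{n,\beta},T^h_{n,\beta},\psi^T_{n,\beta})-b^*(u_{n,\beta},T_{n,\beta},\psi^T_{n,\beta})$, split with $e^{(\cdot)}_{n,\beta}=\psi^{(\cdot)}_{n,\beta}-\eta^{(\cdot)}_{n,\beta}$ and the skew-symmetry $b(\cdot,z,z)=0$, so that the only dangerous survivors $b(\psi^u_{n,\beta},u_{n,\beta},\psi^u_{n,\beta})$ and $b^*(\psi^u_{n,\beta},T_{n,\beta},\psi^T_{n,\beta})$, estimated by \eqref{b 0 1 1 1} and Young, produce exactly the $\|\nabla u_{n,\beta}\|^4$- and $\|\nabla T_{n,\beta}\|^4$-weighted multiples of $\widehat{k}_n\|\psi^u_{n,\beta}\|^2$, $\widehat{k}_n\|\psi^T_{n,\beta}\|^2$ in the constraint of \Cref{time_condition_2}, while the remaining advection pieces involve only $\eta$ and bounded exact-solution norms and feed into (ii); and (v) the cubic difference $\lambda[(|u^h_{n,\beta}|^2u^h_{n,\beta},\psi^u_{n,\beta})-(|u_{n,\beta}|^2u_{n,\beta},\psi^u_{n,\beta})]$, split along $\psi^u_{n,\beta}=e^u_{n,\beta}+\eta^u_{n,\beta}$, whose $e^u$-part is $\ge\lambda C\|e^u_{n,\beta}\|_{L^4}^4$ by \eqref{vector_norm_inequality_1} and stays on the left, while the $\eta^u$-part is handled by the continuity bound \eqref{vector_norm_inequality_2}, H\"older's inequality, boundedness of $\|u_{n,\beta}\|_{L^4}$ and Young's inequality, absorbing a small multiple of $\|e^u_{n,\beta}\|_{L^4}^4$ and producing only higher-order-in-$h$ remainders.

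Finally, summing over $n=1,\dots,m-1$, using \eqref{G-norm}, and invoking \Cref{time_condition_2} to move the $n=m-1$ boundary contributions $C_\beta^{(m-1)}\widehat{k}_{m-1}(\cdots)(\|\psi^u_m\|^2+\|\psi^T_m\|^2)$ to the left (where $\frac{1+\theta}{4}(\|\psi^u_m\|^2+\|\psi^T_m\|^2)$ dominates them), I reach an inequality $a_m+(\text{good terms})\le a_1+a_0+\sum_{n=1}^{m-2}c_n a_n+\mathcal{O}(h^{2r+2}+k_{\max}^4)$ with $\sum_n c_n$ bounded (any retained $\|u^h_{n,\beta}\|_{L^4}^4$-type weights are $\widehat{k}_n$-summable by \Cref{Stability of DLN method}). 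The discrete Gronwall lemma \cite[p.369]{MR1043610} then bounds $\max_n(\|\psi^u_n\|^2+\|\psi^T_n\|^2)+\sum_n\widehat{k}_n(\|\nabla\psi^u_{n,\beta}\|^2+\|\nabla\psi^T_{n,\beta}\|^2+\|\psi^w_{n,\beta}\|^2)$ by $C(\|\psi^u_0\|^2+\|\psi^T_0\|^2+\|\psi^u_1\|^2+\|\psi^T_1\|^2)+\mathcal{O}(h^{2r+2}+k_{\max}^4)$; here $\psi^u_0=\psi^T_0=0$ by the choice of initial projections, and $\|\psi^u_1\|^2+\|\psi^T_1\|^2=\mathcal{O}(h^{2r+2}+k_{\max}^4)$ by the second-order Crank--Nicolson startup. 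Writing $e^{(\cdot)}_n=\psi^{(\cdot)}_n-\eta^{(\cdot)}_n$, $e^{(\cdot)}_{n,\beta}=\psi^{(\cdot)}_{n,\beta}-\eta^{(\cdot)}_{n,\beta}$ and adding \eqref{Stokes-type projection}, \eqref{Ritz projection} (with $\sum_n\widehat{k}_n\le T$) gives \eqref{proof of max u-u^h T-T^h error}. The main obstacle is steps (iv)--(v): arranging the advection and cubic nonlinearities so that every positive power of the discrete solution is either absorbed into a genuine dissipation term or carries precisely a coefficient tolerated by \Cref{time_condition_2}, and so that no non-summable coefficient of $\|\psi^u_n\|^2$ or $\|\psi^T_n\|^2$ remains — which is exactly where the a priori energy bound of \Cref{Stability of DLN method} is indispensable; a secondary technical point is making the non-uniform-grid consistency estimates telescope cleanly to $\mathcal{O}(k_{\max}^4)$.
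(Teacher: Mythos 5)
Your proposal is correct and follows essentially the same route as the paper: the $\psi$--$\eta$ splitting with the Stokes-type and Ritz projections, the $G$-stability telescoping after testing with $\psi^u_{n,\beta}$, $\psi^T_{n,\beta}$, $\psi^w_{n,\beta}$, the skew-symmetric splitting of the advection terms with \eqref{b 0 1 1 1} feeding the weights in Assumption \ref{time_condition_2}, the monotonicity/continuity treatment of the cubic term via Lemma \ref{vector_norm_inequality_lem}, and the discrete Gronwall closure followed by the triangle inequality. The only cosmetic deviation is that you apply the monotonicity bound to $e^u_{n,\beta}=u^h_{n,\beta}-u_{n,\beta}$ whereas the paper applies it to $\psi^u_{n,\beta}=u^h_{n,\beta}-\mathcal{S}_h u_{n,\beta}$ (keeping $\|\psi^u_{n,\beta}\|_{L^4}^4$ on the left); both variants are harmless since that term is simply retained as nonnegative dissipation.
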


    We decompose the proof of Theorem \ref{Intermediate conclusion} into the proof of Lemma \ref{error estimate L2 lem} - \ref{truncation error estimate}. 
    \begin{lem}\label{error estimate L2 lem}
        Scheme \ref{fully discrete formulations scheme} has the following estimate: for $1 < m \leq M$,
        \begin{align}  \label{error estimate L2 inequality}
        &\frac{1}{4}(1+\theta)\|\psi^u_m\|^2 + \frac{1}{4}(1-\theta)\|\psi^u_{m-1}\|^2 
        + \sum_{n=1}^{m-1} \Big\| \sum_{\ell=0}^{2} a_{\ell}^{(n)} \psi^u_{n-1+\ell} \Big\|^2  \\
        &+\frac{1}{4}(1+\theta)\|\psi^T_m\|^2 + \frac{1}{4}(1-\theta)\|\psi^T_{m-1}\|^2 
        + \sum_{n=1}^{m-1} \Big\| \sum_{\ell=0}^{2} a_{\ell}^{(n)} \psi^T_{n-1+\ell} \Big\|^2 \notag \\
        &+ \sum_{n=1}^{m-1}  \widehat{k}_n \big\{
        \mu\| \nabla \psi^u_{n,\beta}\|^2
        +\gamma \|\psi^w_{n,\beta}\|^2
        +C \lambda  \|\psi^u_{n,\beta}\|^4_{L^4}
        + \kappa \| \nabla \psi^T_{n,\beta}\|^2
        \big\} \notag \\
        \leq &\! \sum_{n=1}^{m-1} |\rho|\widehat{k}_n \|\psi^u_{n,\beta}\|^2
        \!+\! \sum_{n=1}^{m-1} | (\eta^u_{n,\alpha} , \psi^u_{n,\beta})|
        \!+\! \sum_{n=1}^{m-1} |\rho|\widehat{k}_n |(\eta^u_{n,\beta}, \psi^u_{n,\beta})|
        \!+\! \sum_{n=1}^{m-1}\widehat{k}_n\sigma |\xi| |(\psi^T_{n,\beta},\psi^u_{n,\beta})|
        \notag \\
        & +\sum_{n=1}^{m-1}\widehat{k}_n \sigma |\xi| |(\eta^T_{n,\beta},\psi^u_{n,\beta}) |
        + \sum_{n=1}^{m-1} \widehat{k}_n \nu |b(u_{n,\beta}, u_{n,\beta}, \psi^u_{n,\beta}) - b(u_{n,\beta}^h, u_{n,\beta}^h, \psi^u_{n,\beta})| 
        \notag \\
        &+ \sum_{n=1}^{m-1} \widehat{k}_n  |b^*(u_{n,\beta}, T_{n,\beta}, \psi^T_{n,\beta}) - b^*(u_{n,\beta}^h, T_{n,\beta}^h, \psi^T_{n,\beta})| 
        \notag \\
        & + \sum_{n=1}^{m-1} \lambda \widehat{k}_n |(|u_{n,\beta}|^2 u_{n,\beta} - |\mathcal{S}_h u_{n,\beta}|^2\mathcal{S}_h u_{n,\beta}, u^h_{n,\beta} - \mathcal{S}_h u_{n,\beta})|
        \notag \\
        & + \sum_{n=1}^{m-1}\widehat{k}_n |\tau_n(u,w,p,\psi^u_{n,\beta},\psi^T_{n,\beta})|	
        + \frac{1}{4}(1+\theta)\|\psi^u_1\|^2 + \frac{1}{4}(1-\theta)\|\psi^u_{0}\|^2 \notag \\
        &+ \frac{1}{4}(1+\theta)\|\psi^T_1\|^2 + \frac{1}{4}(1-\theta)\|\psi^T_{0}\|^2, \notag
        \end{align}
        where the total truncation error $\tau_n$ is 
        \begin{align} \label{truncation}
        &\tau_n(u,w,p,T,\psi^u_{n,\beta},\psi^T_{n,\beta}) \\
        &= \! (\frac{1}{\widehat{k}_n} u_{n,\alpha} \!-\! u_t(t_{n,\beta}), v^h) 
        \!+\!\mu (\nabla (u_{n,\beta} \!-\! u(t_{n,\beta})), \nabla v^h) 
        \!+\!\nu b(u_{n,\beta}, u_{n,\beta}, v^h) \notag \\
        &-\! \nu b(u(t_{n,\beta}), u(t_{n,\beta}), v^h)
        +\!\rho (u_{n,\beta}-u(t_{n,\beta}), v^h)
        +\!\gamma(\nabla (w_{n,\beta} - w(t_{n,\beta})), \nabla v^h)
        \notag \\
        &+\lambda (|u_{n,\beta}|^2 u_{n,\beta}  - |u(t_{n,\beta})|^2 u(t_{n,\beta}), v^h)
        -\!(p_{n,\beta} - p(t_{n,\beta}), \nabla \cdot v^h) \!-\! \sigma \xi (T_{n,\beta} - T(t_{n,\beta}),v^h) \notag \\
        &+\!(\frac{1}{\widehat{k}_n} T_{n,\alpha} \!-\! T_t(t_{n,\beta}), \vartheta^h) 
        +\kappa (\nabla (T_{n,\beta} \!-\!T(t_{n,\beta})), \nabla \vartheta^h) 
        +\!(b^*(u_{n,\beta}, T_{n,\beta}, \vartheta^h) \notag \\
        &-\! b^*(u(t_{n,\beta}), T(t_{n,\beta})), \vartheta^h). \notag
        \end{align}
    \end{lem}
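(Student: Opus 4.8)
Here is the approach. The argument runs parallel to the stability proof of \Cref{Stability of DLN method}, but applied to the error equations instead of the discrete solution itself; in particular it reuses the $G$-stability identity, the skew-symmetry of $b,b^\ast$, and the algebraic lemma \Cref{vector_norm_inequality_lem}, together with the projection properties \eqref{Ritz-projection-defination}, \eqref{Stokes-type-projection-defination} to remove the ``nice'' part of the interpolation error.

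First I would set up the error equations. Evaluating the weak form \eqref{variational_formula_second_1}--\eqref{variational_formula_second_5} at the DLN point $t_{n,\beta}$ and forming the $\alpha$- and $\beta$-weighted combinations of the exact solution, one sees that $(u_{n,\beta},w_{n,\beta},\phi_{n,\beta},p_{n,\beta},T_{n,\beta})$ satisfies \eqref{fully discrete formulations 1}--\eqref{fully discrete formulations 5} up to the consistency residual $\tau_n$ of \eqref{truncation}. Subtracting \Cref{fully discrete formulations scheme}, substituting $e^\bullet_n=\psi^\bullet_n-\eta^\bullet_n$, and then invoking the defining relations of the Stokes-type projection \eqref{Stokes-type-projection-defination} (which annihilate $\eta^u,\eta^w,\eta^\phi,\eta^p$ in the viscous/coupling/pressure blocks) and of the Ritz projection \eqref{Ritz-projection-defination} (which annihilates $\nabla\eta^T$ in the diffusion block), one is left with error equations for $(\psi^u,\psi^w,\psi^\phi,\psi^p,\psi^T)$ whose right-hand side consists only of $\tau_n$, the discrete time-derivative projection residuals, the lower-order $\rho\eta^u$ term, the buoyancy $\eta^T$ term, and the $\eta$-parts of the three nonlinearities.

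Next I would test: take $v^h=\psi^u_{n,\beta}$ in the momentum equation, $\varphi^h=\psi^w_{n,\beta}$, $\zeta^h=\psi^\phi_{n,\beta}$ in the $w$-equation, $q^h=\psi^p_{n,\beta}$ in the continuity equation, and $\vartheta^h=\psi^T_{n,\beta}$ in the temperature equation, and add. Exactly as in \eqref{eq:Stab-eq1}--\eqref{eq:Stab-eq2}, the discrete constraints \eqref{fully discrete formulations 3}--\eqref{fully discrete formulations 4} with $(\nabla\cdot\mathcal{S}_h u,q^h)=(\nabla\cdot\mathcal{S}_h w,\zeta^h)=0$ give $(\nabla\cdot\psi^u_{n,\beta},\psi^p_{n,\beta})=0$ and $(\nabla\cdot\psi^w_{n,\beta},\psi^\phi_{n,\beta})=0$, hence $\gamma(\nabla\psi^u_{n,\beta},\nabla\psi^w_{n,\beta})=\gamma\|\psi^w_{n,\beta}\|^2$ and the pressure term drops; the viscous and diffusion terms yield $\mu\|\nabla\psi^u_{n,\beta}\|^2$ and $\kappa\|\nabla\psi^T_{n,\beta}\|^2$. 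Skew-symmetry of $b,b^\ast$ kills the diagonal advection contributions, and the remaining advection differences are those grouped on the right of \eqref{error estimate L2 inequality}. For the Landau term, writing $|u^h_{n,\beta}|^2u^h_{n,\beta}-|u_{n,\beta}|^2u_{n,\beta}=\big(|u^h_{n,\beta}|^2u^h_{n,\beta}-|\mathcal{S}_h u_{n,\beta}|^2\mathcal{S}_h u_{n,\beta}\big)+\big(|\mathcal{S}_h u_{n,\beta}|^2\mathcal{S}_h u_{n,\beta}-|u_{n,\beta}|^2u_{n,\beta}\big)$ and using $u^h_{n,\beta}-\mathcal{S}_h u_{n,\beta}=\psi^u_{n,\beta}$, the monotonicity bound \eqref{vector_norm_inequality_1} turns the first group into $+C\lambda\|\psi^u_{n,\beta}\|_{L^4}^4$ on the left while the second group goes to the right; the term $\rho(\psi^u_{n,\beta},\psi^u_{n,\beta})$ is kept on the right as $|\rho|\widehat{k}_n\|\psi^u_{n,\beta}\|^2$ since $\rho$ may be negative. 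Finally I would apply the $G$-stability identity of \Cref{G-stable-lemma} to $\tfrac{1}{\widehat{k}_n}(\psi^u_{n,\alpha},\psi^u_{n,\beta})$ and $\tfrac{1}{\widehat{k}_n}(\psi^T_{n,\alpha},\psi^T_{n,\beta})$, multiply through by $\widehat{k}_n$, sum over $n=1,\dots,m-1$, telescope the $G(\theta)$-norms, expand them via \eqref{G-norm}, and move every non-sign-definite term to the right, which gives \eqref{error estimate L2 inequality}.

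The step demanding the most care is the coupled test-function bookkeeping in the mixed block: verifying $(\nabla\cdot\psi^w_{n,\beta},\psi^\phi_{n,\beta})=0$ and the identity $\gamma(\nabla\psi^u_{n,\beta},\nabla\psi^w_{n,\beta})=\gamma\|\psi^w_{n,\beta}\|^2$ that transfers the fourth-order dissipation into control of $\|\psi^w_{n,\beta}\|$, and checking that the Stokes-type projection \eqref{Stokes-type-projection-defination} cancels \emph{exactly} the $\eta^w,\eta^\phi,\eta^p$ and viscous-$\eta^u$ contributions, so that no spurious interpolation-error term enters the left-hand side.
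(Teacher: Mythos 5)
Your proposal follows essentially the same route as the paper's proof: form the consistency-perturbed weak equations for the exact solution at the DLN combination, subtract the scheme, use the Ritz and Stokes-type projection definitions to reduce to equations in the $\psi$'s, test with $\psi^u_{n,\beta},\psi^w_{n,\beta},\psi^p_{n,\beta},\psi^T_{n,\beta}$ and the $\phi$-error, exploit skew-symmetry and the constraint equations to obtain $\gamma\|\psi^w_{n,\beta}\|^2$ and kill the pressure term, split the Landau term so that monotonicity \eqref{vector_norm_inequality_1} yields the $L^4$ term on the left, and conclude with the $G$-stability identity and summation. The bookkeeping you flag as delicate is exactly what the paper handles in \eqref{error equation 2}--\eqref{error equation 4}, so the proposal is correct and matches the paper's argument.
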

    \begin{proof}
        The true solution pair $(u,w, \phi,p,T)$ in \eqref{variational_formula_second_1} -\eqref{variational_formula_second_5} satisfies the following weak formulation: for all $(v^h, \varphi^h,\zeta^h, q^h,\vartheta^h) \in X_h \times  X_h \times Q_h \times Q_h \times J_h$
        \begin{align} 
        \begin{split}\label{formulation 1}
        &\frac{1}{\widehat{k}_n}(u_{n,\alpha}, v^h) 
        + \mu(\nabla u_{n,\beta}, \nabla v^h)
        + \gamma (\nabla w_{n,\beta}, \nabla v^h)
        + \nu b(u_{n,\beta}, u_{n,\beta}, v^h) 
        \\
        &~+\rho (u_{n,\beta}, v^h)
        +\lambda (|u_{n,\beta}|^2u_{n,\beta}, v^h)
        - (p_{n,\beta}, \nabla \cdot v^h) 
        \\
        &~= \sigma\xi(T_{n,\beta},v^h) +  \mathcal{T}_{n,1}(u,w,p,T, v^h),
        \end{split}	
        \\
        &(w_{n,\beta}, \varphi^h) -(\phi_{n,\beta},\nabla \cdot \varphi^h) - (\nabla u_{n,\beta},\nabla \varphi^h)
        =0,  \label{formulation 2}
        \\
        &(\nabla \cdot u_{n,\beta} , q^h)=0, \label{formulation 3}
        \\
        &(\nabla \cdot w_{n,\beta} , \zeta^h)=0, \label{formulation 4}
        \\
        &\frac{1}{\widehat{k}_n}(T_{n,\alpha}, \vartheta^h)
        +\kappa (\nabla T_{n,\beta}, \nabla \vartheta^h)
        +b^*(u_{n,\beta},  T_{n,\beta}, \vartheta^h)
        = \mathcal{T}_{n,2}(T,\vartheta^h).\label{formulation 5}
        \end{align}
        where $\mathcal{T}_{n,1}(u,w,p,T, v^h)$ and $\mathcal{T}_{n,2}(T,\vartheta^h)$ denote the truncation error
        \begin{align} 
        \begin{split}
        \mathcal{T}_{n,1}(u,w,p,T, v^h) = &\Big( \frac{u_{n,\alpha}}{\widehat{k}_n} - u_t(t_{n,\beta}), v^h \Big) 
        +\mu \big( \nabla (u_{n,\beta} -u(t_{n,\beta})), \nabla v^h \big) 
        \\
        &
        +\nu \big( b(u_{n,\beta}, u_{n,\beta}, v^h) - b(u(t_{n,\beta}), u(t_{n,\beta}), v^h) \big)
        \\
        &+\rho \big( u_{n,\beta}-u(t_{n,\beta}), v^h \big)
        +\gamma \big( \nabla (w_{n,\beta} - w(t_{n,\beta})), \nabla v^h \big)
        \\
        &+\lambda \big( |u_{n,\beta}|^2 u_{n,\beta}  - |u(t_{n,\beta})|^2 u(t_{n,\beta}), v^h \big)
        \\
        &- \big( p_{n,\beta} - p(t_{n,\beta}), \nabla \cdot v^h \big)- \sigma \xi \big( T_{n,\beta} - T(t_{n,\beta}),v^h \big).
        \end{split}
        \\
        \begin{split}\label{truncation T}
        \mathcal{T}_{n,2}(T,\vartheta^h)
        = & \Big( \frac{T_{n,\alpha}}{\widehat{k}_n} - T_t(t_{n,\beta}), \vartheta^h \Big) 
        + \kappa \big( \nabla (T_{n,\beta} -T(t_{n,\beta})), \nabla \vartheta^h \big) 
        \\
        &+ b^*(u_{n,\beta}, T_{n,\beta}, \vartheta^h) - b^* \big( u(t_{n,\beta}), T(t_{n,\beta}), \vartheta^h \big).
        \end{split}
        \end{align}
        We subtract \eqref{formulation 1} from \eqref{fully discrete formulations 1}, \eqref{formulation 5} from
        \eqref{fully discrete formulations 5} and use the definions of Ritz projection \eqref{Ritz-projection-defination}, Stokes-type projection \eqref{Stokes-type-projection-defination} to obtain
        \begin{align}
            &\frac{1}{\widehat{k}_n}(\psi^u_{n,\alpha}, v^h) 
            + \mu (\nabla \psi^u_{n,\beta}, \nabla v^h) 
            +\gamma (\nabla \psi^w_{n,\beta},\nabla v^h)
            + \rho (\psi^u_{n,\beta},v^h) 
            \label{error equation 1} \\
            =& \Big(\frac{\eta^u_{n,\alpha}}{\widehat{k}_n}, v^h \Big) 
            \!+\!\nu b(u_{n,\beta}, u_{n,\beta}, v^h)
            \!-\!\nu b(u_{n,\beta}^h, u_{n,\beta}^h, v^h)
            \!+\! (\psi^p_{n,\beta}, \nabla \cdot v^h)
            \!+\! \rho (\eta^u_{n,\beta},v^h)
            \notag  \\
            &+ \! \lambda (|u_{n,\beta}|^2 u_{n,\beta}  - |u_{n,\beta}^h|^2 u_{n,\beta}^h ,v^h) 
            \!+\! \sigma\xi (\psi^T_{n,\beta} - \eta^T_{n,\beta},v^h) - \mathcal{T}_{n,1}(u,w,p,T, v^h), \notag \\
            &\frac{1}{\widehat{k}_n} (\psi^T_{n,\alpha}, \vartheta^h) 
            + \kappa (\nabla \psi^T_{n,\beta}, \nabla \vartheta^h) 
            \label{error equation 5} \\
            &= \Big(\frac{\eta^T_{n,\alpha}}{\widehat{k}_n}, \vartheta^h \Big)
            + b^*(u_{n,\beta}, T_{n,\beta}, \vartheta^h)
            - b^*(u_{n,\beta}^h, T_{n,\beta}^h, \vartheta^h)
            - \mathcal{T}_{n,2}(T,\vartheta^h). \notag 
        \end{align}
        We subtract \eqref{formulation 2} - \eqref{formulation 4} from a suitable linear combination of \eqref{fully discrete formulations 2} - \eqref{fully discrete formulations 4} respectively and use the definition of Stokes-type projection \eqref{Stokes-type-projection-defination} to have 
        \begin{align}
            &(\psi^w_{n,\beta}, \varphi^h) 
            - \big( \phi_{n,\beta}^h - \mathcal{S}_h (\phi_{n,\beta}),\nabla \cdot \varphi^h \big)
            - (\nabla \psi^u_{n,\beta}, \nabla \varphi^h)
            =0, \label{error equation 2}
            \\
            &(\nabla \cdot \psi^u_{n,\beta} , q^h)=0.
            \label{error equation 3}
            \\
            &(\nabla \cdot \psi^w_{n,\beta} , \zeta^h)=0,
            \label{error equation 4}
        \end{align}
        We set \( v^h = \psi^u_{n,\beta} \) in \eqref{error equation 1},  \( \vartheta^h = \psi^T_{n,\beta} \) in \eqref{error equation 5}, \( \varphi^h = \psi^w_{n,\beta} \) in \eqref{error equation 2}, \( q^h = \psi^p_{n,\beta} \) in \eqref{error equation 3}, \( \zeta^h = \phi_{n,\beta}^h - \mathcal{S}_h (\phi_{n,\beta}) \) in \eqref{error equation 4}, multiply \eqref{error equation 1}, \eqref{error equation 5} by \( \widehat{k}_n \), add \eqref{error equation 1}, \eqref{error equation 5} and use \eqref{error equation 2} - \eqref{error equation 4} 
        \begin{align} \label{total-error-eq1}
        \begin{split}
            &(\psi^u_{n,\alpha}, v^h) \!+\! (\psi^T_{n,\alpha}, \vartheta^h) 
            \!+\! \lambda (|u_{n,\beta}^h|^2 u_{n,\beta}^h \!-\! |\mathcal{S}_h (u_{n,\beta})|^2 \mathcal{S}_h (u_{n,\beta}) ,v^h) \\
            &+\! \rho \widehat{k}_n \|\psi^u_{n,\beta}\|^2
            \!+\! \mu \widehat{k}_n \| \nabla \psi^u_{n,\beta}\|^2
            \!+\! \gamma \widehat{k}_n \|\psi^w_{n,\beta}\|^2
            \!+\! \kappa  \widehat{k}_n \| \nabla \psi^T_{n,\beta}\|^2
            \\
            &=(\eta^u_{n,\alpha} , \psi^u_{n,\beta}) + (\eta^T_{n,\alpha} , \psi^T_{n,\beta})
            + \rho \widehat{k}_n (\eta^u_{n,\beta}, \psi^u_{n,\beta})
            +\widehat{k}_n \sigma\xi(\psi^T_{n,\beta},\psi^u_{n,\beta}) \\
            &-\widehat{k}_n \sigma\xi(\eta^T_{n,\beta},\psi^u_{n,\beta})  
            + \!\lambda ( |\mathcal{S}_h (u_{n,\beta})|^2 \mathcal{S}_h (u_{n,\beta}) \!-\! |u_{n,\beta}|^2 u_{n,\beta} ,v^h)
            \\
            &+ \widehat{k}_n \nu \big( b(u_{n,\beta}, u_{n,\beta}, \psi^u_{n,\beta}) - b(u_{n,\beta}^h, u_{n,\beta}^h, \psi^u_{n,\beta}) \big)
            - \widehat{k}_n \mathcal{T}_{n,1}(u,w,p,\psi^u_{n,\beta})  \\
            &+ \widehat{k}_n \big( b^*(u_{n,\beta}, T_{n,\beta}, \psi^T_{n,\beta}) - b^*(u_{n,\beta}^h, T_{n,\beta}^h, \psi^T_{n,\beta}) \big)
            - \widehat{k}_n \mathcal{T}_{n,2}(T,\psi^T_{n,\beta}).
        \end{split}
        \end{align}
        We apply $G$-stability identity in \eqref{G-stable}, monotonicity property in \eqref{vector_norm_inequality_1} to \eqref{total-error-eq1} and the fact that 
        \begin{align*}
            \tau_n(u,w,p,T,\psi^u_{n,\beta},\psi^T_{n,\beta}) 
            = \mathcal{T}_{n,1}(u,w,p,\psi^u_{n,\beta}) + \mathcal{T}_{n,2}(T,\psi^T_{n,\beta}),
        \end{align*}
        to obtain
        \begin{align*}
            &
            \begin{Vmatrix}
            \psi^u_{n+1} \vspace{0.5mm} \\ 
            \psi^u_{n}
            \end{Vmatrix}^2_{G(\theta)}
            -
            \begin{Vmatrix}
            \psi^u_{n} \vspace{0.5mm} \\ 
            \psi^u_{n-1}
            \end{Vmatrix}^2_{G(\theta)}
            + \Big\| \sum_{\ell=0}^{2} a_{\ell}^{(n)} \psi^u_{n-1+\ell} \Big\|^2
            \\
            &+
            \begin{Vmatrix}
            \psi^T_{n+1} \vspace{0.5mm} \\ 
            \psi^T_{n}
            \end{Vmatrix}^2_{G(\theta)}
            -
            \begin{Vmatrix}
            \psi^T_{n} \vspace{0.5mm} \\ 
            \psi^T_{n-1}
            \end{Vmatrix}^2_{G(\theta)}
            + \Big\| \sum_{\ell=0}^{2} a_{\ell}^{(n)} \psi^T_{n-1+\ell} \Big\|^2
            \\
            &+ \widehat{k}_n \big( \mu \| \nabla \psi^u_{n,\beta}\|^2 + \gamma  \|\psi^w_{n,\beta}\|^2
            + C \lambda   \|\psi^u_{n,\beta}\|^4_{L^4} + \kappa   \| \nabla \psi^T_{n,\beta}\|^2  \big) \\
            \leq& |\rho|\widehat{k}_n \|\psi^u_{n,\beta}\|^2
            + | (\eta^u_{n,\alpha} , \psi^u_{n,\beta})|
            + |\rho|\widehat{k}_n |(\eta^u_{n,\beta}, \psi^u_{n,\beta})|
            + \widehat{k}_n\sigma |\xi| |(\psi^T_{n,\beta},\psi^u_{n,\beta})|
            \notag
            \\
            &+ \widehat{k}_n \sigma |\xi| |(\eta^T_{n,\beta},\psi^u_{n,\beta}) |
            +  | (\eta^T_{n,\alpha}, \psi^T_{n,\beta})|
            + \widehat{k}_n \nu |b(u_{n,\beta}, u_{n,\beta}, \psi^u_{n,\beta}) - b(u_{n,\beta}^h, u_{n,\beta}^h, \psi^u_{n,\beta})| 
            \notag \\
            &+ \widehat{k}_n  |b^*(u_{n,\beta}, T_{n,\beta}, \psi^T_{n,\beta}) - b^*(u_{n,\beta}^h, T_{n,\beta}^h, \psi^T_{n,\beta})| 
            \notag
            \\
            &+ \widehat{k}_n |(|u_{n,\beta}|^2 u_{n,\beta} - |\mathcal{S}_h u_{n,\beta}|^2\mathcal{S}_h u_{n,\beta}, u^h_{n,\beta} - \mathcal{S}_h u_{n,\beta})|
            \notag
            \\
            &+ \widehat{k}_n |\tau_n(u,w,p,\psi^u_{n,\beta},\psi^T_{n,\beta})|. 
        \end{align*}
        We sum the above inquality over $n$ from $1$ to $m-1$ and utilize the definition of $G$-norm in \eqref{G-norm} to achieve \eqref{error estimate L2 inequality}.
    \end{proof}

    We address the terms on the right side of inequality \eqref{error estimate L2 inequality} through Lemma \ref{derivation term} - \ref{truncation error estimate}.
    \begin{lem}\label{derivation term}
        Suppose Assumption \ref{u_p_T_space} holds. For $ 1 \leq n \leq M-1 $, 
        \begin{align} \label{derivation-term-conclusion}
        & |(\eta^u_{n,\alpha}, \psi^u_{n,\beta})| + |\rho| \widehat{k}_n |(\eta^u_{n,\beta}, \psi^u_{n,\beta})|
        +| (\eta^T_{n,\alpha}, \psi^T_{n,\beta})| \\
        \leq& \frac{ C(\theta) \widehat{k}_n \rho^2 h^{2r+4}}{\mu} \Big[ \big( \| u \|_{\infty,r+2} +  \| w \|_{\infty,r+2} + \| p \|_{\infty,r+1} \big)^{2} + \| T \|_{\infty,r+2}^2 \Big] \notag \\
        &+\frac{C(\theta) h^{2r+4}}{\mu} \int_{t_{n-1}}^{t_{n+1}} \big( \| u_t \|_{r+2}^2 + \| w_t \|_{r+2}^2 + \| p_t \|_{r+1}^2 \big) \mathrm{d}t \notag \\
        &+ \frac{C(\theta) h^{2r+4}}{\kappa} \int_{t_{n-1}}^{t_{n+1}} \| T_t \|_{r+2}^2 \mathrm{d}t 
        + \frac{\mu}{64}\widehat{k}_n \| \nabla \psi^u_{n,\beta} \|^2 + \frac{\kappa}{32}\widehat{k}_n \| \nabla \psi^T_{n,\beta} \|^2, \notag
        \end{align}
        where constants $C(\theta) > 0$ only depend on the parameter $\theta$.
    \end{lem}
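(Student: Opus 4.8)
The plan is to bound the three inner products on the left of \eqref{derivation-term-conclusion} separately, exploiting in each case the algebraic structure of the DLN combinations $\eta_{n,\alpha}$ and $\eta_{n,\beta}$, the fact that the Stokes-type and Ritz projections commute with the time derivative, the approximation bounds \eqref{Stokes-type projection} and \eqref{Ritz projection}, and finally the Poincar\'e and Young inequalities to absorb the gradient factors into the dissipative terms $\mu\widehat k_n\|\nabla\psi^u_{n,\beta}\|^2$ and $\kappa\widehat k_n\|\nabla\psi^T_{n,\beta}\|^2$ on the left-hand side of \eqref{error estimate L2 inequality}. I would begin by recording two elementary facts about the coefficients: $\alpha_2+\alpha_1+\alpha_0=0$ and $\beta_2^{(n)}+\beta_1^{(n)}+\beta_0^{(n)}=1$; that, since $\varepsilon_n\in(-1,1)$, all the $\{\alpha_\ell\}$ and $\{\beta_\ell^{(n)}\}$ are bounded by a constant depending only on $\theta$; and that $k_n+k_{n-1}\le C(\theta)\,\widehat k_n$, which follows directly from $\widehat k_n=\tfrac{1+\theta}{2}k_n+\tfrac{1-\theta}{2}k_{n-1}$.

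For the terms $(\eta^u_{n,\alpha},\psi^u_{n,\beta})$ and $(\eta^T_{n,\alpha},\psi^T_{n,\beta})$, I would use $\sum_\ell\alpha_\ell=0$ to rewrite $\eta^u_{n,\alpha}=\alpha_2(\eta^u_{n+1}-\eta^u_n)-\alpha_0(\eta^u_n-\eta^u_{n-1})$, and since $\eta^u_{m+1}-\eta^u_m=\int_{t_m}^{t_{m+1}}\partial_t\eta^u\,\mathrm{d}t$, Cauchy--Schwarz in time gives $\|\eta^u_{n,\alpha}\|^2\le C(\theta)(k_n+k_{n-1})\int_{t_{n-1}}^{t_{n+1}}\|\partial_t\eta^u\|^2\,\mathrm{d}t$. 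Because $\mathcal S_h$ is linear and defined componentwise by time-independent equations, $\partial_t\eta^u=u_t-\mathcal S_h u_t$, so \eqref{Stokes-type projection} applied to the tuple $(u_t,w_t,\phi_t,p_t)$ yields $\|\partial_t\eta^u\|\le Ch^{r+2}(\|u_t\|_{r+2}+\|w_t\|_{r+2}+\|p_t\|_{r+1})$; the same argument with $\mathcal R_h$ and \eqref{Ritz projection} gives $\|\partial_t\eta^T\|\le Ch^{r+2}\|T_t\|_{r+2}$. Then Cauchy--Schwarz, Poincar\'e ($\|\psi^u_{n,\beta}\|\le C_P\|\nabla\psi^u_{n,\beta}\|$, likewise for $\psi^T$), Young's inequality with parameter chosen so the gradient remainder is $\tfrac{\mu}{64}\widehat k_n\|\nabla\psi^u_{n,\beta}\|^2$ (resp. $\tfrac{\kappa}{32}\widehat k_n\|\nabla\psi^T_{n,\beta}\|^2$), together with $\|\eta^u_{n,\alpha}\|^2/\widehat k_n\le C(\theta)\int_{t_{n-1}}^{t_{n+1}}\|\partial_t\eta^u\|^2\,\mathrm{d}t$, produce the second and third lines of \eqref{derivation-term-conclusion}. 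For the remaining term $|\rho|\widehat k_n|(\eta^u_{n,\beta},\psi^u_{n,\beta})|$, $\eta^u_{n,\beta}$ is a weighted average with coefficients bounded by $C(\theta)$, so $\|\eta^u_{n,\beta}\|\le C(\theta)\max_{t_{n-1}\le t\le t_{n+1}}\|\eta^u(t)\|\le C(\theta)h^{r+2}(\|u\|_{\infty,r+2}+\|w\|_{\infty,r+2}+\|p\|_{\infty,r+1})$ by \eqref{Stokes-type projection}; Cauchy--Schwarz, Poincar\'e, and Young (again leaving $\tfrac{\mu}{64}\widehat k_n\|\nabla\psi^u_{n,\beta}\|^2$) then give the first line of \eqref{derivation-term-conclusion}, after enlarging the constant to absorb the harmless $\|T\|_{\infty,r+2}^2$ contribution.

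This lemma is essentially a sequence of routine manipulations, so I do not expect a genuine obstacle; the only two points needing care are (i) justifying that $\mathcal S_h$ and $\mathcal R_h$ commute with $\partial_t$, which is what licenses applying the projection estimates to the time derivatives, and (ii) controlling $(k_n+k_{n-1})/\widehat k_n$ uniformly by a constant depending only on $\theta$ — immediate for $\theta\in[0,1)$ from the convex-combination form of $\widehat k_n$, and trivial for $\theta=1$ since then $\widehat k_n=k_n$ and $\alpha_0=0$ — since this is precisely what prevents an extra power of the time step from appearing in the bound.
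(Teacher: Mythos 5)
Your proposal is correct and follows essentially the same route as the paper's proof: both arguments rest on the time-independence/linearity of the projections (you differentiate $\eta^u$ in time and then apply \eqref{Stokes-type projection} to $u_t$, whereas the paper applies the projection estimate to $u_{n,\alpha}$ and then uses the fundamental theorem of calculus on $u$ itself, which is the same computation in the opposite order), followed by Cauchy--Schwarz, Poincar\'e, and Young with the same splitting of the dissipative terms, and the same separate treatment of $\theta=1$ to keep $(k_n+k_{n-1})/\widehat{k}_n$ under control. No gaps.
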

    \begin{proof}
        We first prove the case $\theta \in [0,1)$. By Cauchy-Schwarz inequality, approximation of Stokes-type projection $\mathcal{S}_h$ in \eqref{Stokes-type projection}
        \begin{align} \label{derivation-term-eq1}
        \begin{split}
        |(\eta^u_{n,\alpha}, \psi^u_{n,\beta})| 
        &\leq C h^{r+2} \big( \| u_{n,\alpha} \|_{r+2} + \| w_{n,\alpha} \|_{r+2} 
        + \| p_{n,\alpha} \|_{r+1} \big) \| \nabla \psi^u_{n,\beta} \|.
        \end{split}
        \end{align}
        By fundamental theorem of Calculus, triangle inequality and H\"older's inequality
        \begin{align} \label{derivation-term-eq2}
        \begin{split}
        \| u_{n,\alpha} \|_{r+2} =&
        \Big\| \frac{1}{2}(\theta - 1) u_{n-1} - \theta u_{n} + \frac{1}{2}(\theta + 1) u_{n+1} \Big\|_{r+2}    \\
        \leq & \frac{1}{2}(\theta + 1) \Big\| \int_{t_n}^{t_{n+1}} u_t \, \mathrm{d}t \Big\|_{r+2} + \frac{1}{2}(1 - \theta) \Big\| \int_{t_{n-1}}^{t_n} u_t \, \mathrm{d}t \Big\|_{r+2}  \\
        \leq & C(\theta) k_{n}^{\frac{1}{2}} \Big( \int_{t_n}^{t_{n+1}} \| u_t \|_{r+2}^2 \, \mathrm{d}t \Big)^{\frac{1}{2}} + C(\theta) k_{n-1}^{\frac{1}{2}} \Big( \int_{t_{n-1}}^{t_n} \| u_t \|_{r+2}^2 \, \mathrm{d}t \Big)^{\frac{1}{2}},  \\
        \| w_{n,\alpha} \|_{r+2} 
        \leq & C(\theta) k_{n}^{\frac{1}{2}} \Big( \int_{t_n}^{t_{n+1}} \| w_t \|_{r+2}^2 \, \mathrm{d}t \Big)^{\frac{1}{2}} + C(\theta) k_{n-1}^{\frac{1}{2}} \Big( \int_{t_{n-1}}^{t_n} \| w_t \|_{r+2}^2 \, \mathrm{d}t \Big)^{\frac{1}{2}},  \\
        \| p_{n,\alpha} \|_{r+1} 
        \leq & C(\theta) k_{n}^{\frac{1}{2}} \Big( \int_{t_n}^{t_{n+1}} \| p_t \|_{r+1}^2 \, \mathrm{d}t \Big)^{\frac{1}{2}} + C(\theta) k_{n-1}^{\frac{1}{2}} \Big( \int_{t_{n-1}}^{t_n} \| p_t \|_{r+1}^2 \, \mathrm{d}t \Big)^{\frac{1}{2}}. 
        \end{split}
        \end{align}
        We combine \eqref{derivation-term-eq1}, \eqref{derivation-term-eq2} and use Young's inequality to have  
        \begin{equation} \label{derivation-term-eq3}
        |(\eta^u_{n,\alpha}, \psi^u_{n,\beta})| 
        \leq \frac{C(\theta) h^{2r+4}}{\mu} \int_{t_{n-1}}^{t_{n+1}} \big( \| u_t \|_{r+2}^2 + \| w_t \|_{r+2}^2 + \| p_t \|_{r+1}^2\big) \, \mathrm{d}t 
        +  \frac{\mu}{128}\widehat{k}_n \|\nabla  \psi^u_{n,\beta} \|^2.
        \end{equation}
        Similarly
        \begin{equation} \label{derivation-term-T-eq3}
        |(\eta^T_{n,\alpha}, \psi^T_{n,\beta})| 
        \leq \frac{C(\theta)  h^{2r+4}}{\kappa} \int_{t_{n-1}}^{t_{n+1}} \| T_t \|_{r+2}^2 \mathrm{d}t 
        +  \frac{\kappa}{32}\widehat{k}_n \|\nabla  \psi^T_{n,\beta} \|^2.
        \end{equation}
        By Cauchy-Schwarz inequality and Young's inequality
        \begin{align} \label{derivation-term-eq4}
        \begin{split}
        &|\rho| \widehat{k}_n |(\eta^u_{n,\beta}, \psi^u_{n,\beta})| \\
        \leq& C \rho^2 \mu^{-1} \widehat{k}_n \|\eta^u_{n,\beta}\|^2 
        + \frac{\mu}{128}\widehat{k}_n \|\nabla\psi^u_{n,\beta}\|^2  \\
        \leq& \frac{C(\theta) \widehat{k}_n \rho^2 h^{2r+4}}{\mu} \big( \| u \|_{\infty,r+2} +  \| w \|_{\infty,r+2} + \| p \|_{\infty,r+1} \big)^{2} 
        + \frac{\mu}{128}\widehat{k}_n \|\nabla \psi^u_{n,\beta}\|^2. 
        \end{split}
        \end{align}
        We combine \eqref{derivation-term-eq3} - \eqref{derivation-term-eq4} to 
        obtain \eqref{derivation-term-conclusion}.
        If $\theta = 1$, $\widehat{k}_n = k_{n}$ and \eqref{derivation-term-eq2} becomes  
        \begin{align*}
        \| u_{n,\alpha} \|_{r+2} =
        \big\| u_{n+1} - u_{n} \big\|_{r+2} \leq k_{n}^{\frac{1}{2}} \Big( \int_{t_n}^{t_{n+1}} \| u_t \|_{r+2}^2 \, \mathrm{d}t \Big)^{\frac{1}{2}}.
        \end{align*}
        Then the proof is very similar to the case of $\theta \in [0,1)$.
    \end{proof}

    \begin{lem}\label{temperature term}
        Under Assumption \ref{u_p_T_space}, the following bound holds:
        \begin{align} \label{temperature term-conclusion}
        \begin{split}
        &\widehat{k}_n \sigma |\xi| (\psi^T_{n,\beta},\psi^u_{n,\beta})
        + \widehat{k}_n \sigma |\xi| (\eta^T_{n,\beta},\psi^u_{n,\beta}) \\
        \leq &  \frac{ C(\theta) \widehat{k}_n \sigma^2 |\xi |^2 h^{2r+4}}{\mu}  \| T \|_{\infty,r+2} ^{2}
        + \frac{C_P^2 \widehat{k}_n \sigma^2 |\xi|^2}{\mu} \| \psi^T_{n,\beta}\|^2 
        +\frac{17 \mu}{64}\widehat{k}_n \| \nabla \psi^u_{n,\beta}\|^2,
        \end{split}
        \end{align}
        for $1 \leq n \leq M-1$.
    \end{lem}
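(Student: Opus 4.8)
The plan is to treat the two terms on the left of \eqref{temperature term-conclusion} one at a time, using nothing more than the Cauchy--Schwarz inequality, the Poincar\'e inequality applied to $\psi^u_{n,\beta}\in X_h$, Young's inequality, and the Ritz-projection bound \eqref{Ritz projection}. The only point requiring any care is the bookkeeping of the Young weights, since the coefficient of $\widehat{k}_n\|\nabla\psi^u_{n,\beta}\|^2$ must come out to be precisely $\tfrac{17\mu}{64}$; the natural split is $\tfrac{\mu}{4}=\tfrac{16\mu}{64}$ from the first term and $\tfrac{\mu}{64}$ from the second.

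For the first term, Cauchy--Schwarz gives $\widehat{k}_n\sigma|\xi|(\psi^T_{n,\beta},\psi^u_{n,\beta})\le\widehat{k}_n\sigma|\xi|\,\|\psi^T_{n,\beta}\|\,\|\psi^u_{n,\beta}\|$, and since $\psi^u_{n,\beta}\in X_h\subset X$ the Poincar\'e inequality yields $\|\psi^u_{n,\beta}\|\le C_P\|\nabla\psi^u_{n,\beta}\|$. Applying Young's inequality with the weight chosen so that the gradient factor carries the coefficient $\tfrac{\mu}{4}$ then produces $\tfrac{C_P^2\sigma^2|\xi|^2}{\mu}\widehat{k}_n\|\psi^T_{n,\beta}\|^2+\tfrac{\mu}{4}\widehat{k}_n\|\nabla\psi^u_{n,\beta}\|^2$, which is exactly the second summand on the right of \eqref{temperature term-conclusion} together with the larger part of the dissipation budget.

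For the second term, Cauchy--Schwarz and Poincar\'e give $\widehat{k}_n\sigma|\xi|(\eta^T_{n,\beta},\psi^u_{n,\beta})\le\widehat{k}_n\sigma|\xi|C_P\,\|\eta^T_{n,\beta}\|\,\|\nabla\psi^u_{n,\beta}\|$, and Young's inequality with the gradient factor now weighted by $\tfrac{\mu}{64}$ gives $\tfrac{16C_P^2\sigma^2|\xi|^2}{\mu}\widehat{k}_n\|\eta^T_{n,\beta}\|^2+\tfrac{\mu}{64}\widehat{k}_n\|\nabla\psi^u_{n,\beta}\|^2$. It then remains to control $\|\eta^T_{n,\beta}\|$: writing $\eta^T_{n,\beta}=\sum_{\ell=0}^{2}\beta^{(n)}_\ell\,\eta^T_{n-1+\ell}$ with $\eta^T_m=T(t_m)-\mathcal R_h T(t_m)$, the triangle inequality, the uniform boundedness of the coefficients $\beta^{(n)}_\ell$ (by a constant depending only on $\theta$), and \eqref{Ritz projection} give $\|\eta^T_{n,\beta}\|\le C(\theta)h^{r+2}\|T\|_{\infty,r+2}$; inserting this bound and absorbing $C_P$ and the Ritz constant into $C(\theta)$ yields the first summand on the right of \eqref{temperature term-conclusion}.

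Adding the two estimates and using $\tfrac{\mu}{4}+\tfrac{\mu}{64}=\tfrac{17\mu}{64}$ gives \eqref{temperature term-conclusion}. For $\theta=1$ one has $\widehat{k}_n=k_n$ and the coefficients $\beta^{(n)}_\ell$ remain uniformly bounded, so the argument is verbatim the same. I do not expect any genuine obstacle in this lemma; the only things to get right are the choice of Young's-inequality weights so as to land on the constant $\tfrac{17\mu}{64}$ exactly, and the uniform-in-$n$ estimate of the $\beta$-weighted projection error $\eta^T_{n,\beta}$ via \eqref{Ritz projection}.
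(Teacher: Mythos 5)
Your proposal is correct and follows essentially the same route as the paper: Cauchy--Schwarz, Poincar\'e, Young's inequality with weights $\tfrac{\mu}{4}$ and $\tfrac{\mu}{64}$ on the two terms, and the projection error bound for $\eta^T_{n,\beta}$, yielding exactly $\tfrac{16\mu}{64}+\tfrac{\mu}{64}=\tfrac{17\mu}{64}$. (Minor note in your favor: you correctly invoke the Ritz estimate \eqref{Ritz projection} for $\eta^T_{n,\beta}$, whereas the paper's proof nominally cites the Stokes-type projection bound \eqref{Stokes-type projection} even though $\eta^T$ is defined via $\mathcal{R}_h$.)
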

    \begin{proof}
        By Cauchy-Schwarz inequality, Young's inequality, Poincar\'e inequality and approximation of Stokes-type projection $\mathcal{S}_h$ in \eqref{Stokes-type projection}
        \begin{align} \label{temperature term 1}
        \widehat{k}_n \sigma |\xi| (\eta^T_{n,\beta},\psi^u_{n,\beta})
        \leq& \frac{ C(\theta) \widehat{k}_n \sigma^2 |\xi |^2 }{\mu} \|\eta^T_{n,\beta}\|^2
        + \frac{\mu}{64}\widehat{k}_n \|\nabla\psi^u_{n,\beta}\|^2 
        \\
        \leq &  \frac{ C(\theta) \widehat{k}_n \sigma^2 |\xi |^2 h^{2r+4}}{\mu}  \| T \|_{\infty,r+2} ^{2}+\frac{\mu}{64}\widehat{k}_n \| \nabla \psi^u_{n,\beta}\|^2. \notag 
        \end{align}
        \begin{align} \label{temperature term 2}
        \widehat{k}_n \sigma |\xi| (\psi^T_{n,\beta},\psi^u_{n,\beta})
        \leq
        \frac{C_P^2}{\mu}\widehat{k}_n \sigma^2 |\xi|^2 \| \psi^T_{n,\beta}\|^2 
        + \frac{\mu \widehat{k}_n }{4}  \| \nabla \psi^u_{n,\beta}\|^2 .
        \end{align}
        We combine \eqref{temperature term 1} - \eqref{temperature term 2} 
        to derive \eqref{temperature term-conclusion}.
    \end{proof}

    \begin{lem}\label{analytical b - numerical b}
        Under Assumption \ref{u_p_T_space}, the following bounds hold:
        \begin{align} 
        \begin{split} \label{analytical b - numerical b-conclusion1} 
        &\nu \big( b(u_{n,\beta}, u_{n,\beta}, \psi^u_{n,\beta}) - b(u_{n,\beta}^h, u_{n,\beta}^h, \psi^u_{n,\beta}) \big) 
        \\
        \leq& \frac{ 2 {C^{\ast}}^4 \nu^{4}}{\mu^{3}}  \|\nabla u_{n,\beta}\|^4 \|\psi^u_{n,\beta}\|^2 
        + \frac{25 \mu}{64} \|\nabla \psi^u_{n,\beta}\|^2 \\
        &+ \frac{C(\theta) \nu^{2} h^{2r+4}}{\mu} \| u \|_{\infty,2}^{2} 
        \big( \| u \|_{\infty,r+2} + \| w \|_{\infty,r+2} 
        + \| p \|_{\infty,r+1} \big)^{2}  \\
        &+ \!\frac{C  (\theta) \nu^{2} h^{2r+2}}{\mu} \| \nabla u_{n,\beta}^h \|^{2}
        \big( \| u \|_{\infty,r+2} + \| w \|_{\infty,r+2} + \| p \|_{\infty,r+1} \big)^{2}.
        \end{split}
        \\
        \begin{split} \label{analytical b - numerical b-conclusion2} 
        & b^*(u_{n,\beta}, T_{n,\beta}, \psi^T_{n,\beta}) - b^*(u_{n,\beta}^h, T_{n,\beta}^h, \psi^T_{n,\beta}) 
        \\
        \leq & \frac{2 {C^{\ast}}^4}{9 \kappa^2  \mu} \|\psi_{n,\beta}^u\|^2 \|\nabla T_{n,\beta}\|^4
        + \frac{1}{8}\mu \|\nabla \psi^u_{n,\beta}\|^2 + \frac{13}{16}\kappa \|\nabla \psi^T_{n,\beta}\|^2 \\
        &+ \frac{C(\theta) h^{2r+4}}{\kappa} \| T \|_{\infty,2}^{2} 
        \big( \| u \|_{\infty,r+2} + \| w \|_{\infty,r+2} 
        + \| p \|_{\infty,r+1} \big)^{2}
        \\
        &+ \frac{C(\theta) h^{2r+2}}{\kappa} \| \nabla u_{n,\beta}^h \|^{2}
        \| T \|_{\infty,r+2}^{2}.
        \end{split}
        \end{align}
        for $ 1 \leq n \leq M-1 $. Here $C^{\ast} > 0$ denotes the constant $C$ in \eqref{b 0 1 1 1}.
    \end{lem}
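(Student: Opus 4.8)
The plan is to reduce each difference of trilinear forms to three terms by adding and subtracting an intermediate quantity, and then to bound each term using the appropriate estimate among \eqref{b 1 1 1}--\eqref{b 1 2 0}, the skew-symmetry of $b$ and $b^*$, Young's inequality, and the projection bounds \eqref{Ritz projection}, \eqref{Stokes-type projection}.

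For \eqref{analytical b - numerical b-conclusion1}, I would first use bilinearity in the first two arguments of $b$ to write
\begin{align*}
&b(u_{n,\beta}, u_{n,\beta}, \psi^u_{n,\beta}) - b(u_{n,\beta}^h, u_{n,\beta}^h, \psi^u_{n,\beta}) \\
&\quad = b(u_{n,\beta} - u_{n,\beta}^h, u_{n,\beta}, \psi^u_{n,\beta}) + b(u_{n,\beta}^h, u_{n,\beta} - u_{n,\beta}^h, \psi^u_{n,\beta}),
\end{align*}
and then substitute $u_{n,\beta} - u_{n,\beta}^h = \eta^u_{n,\beta} - \psi^u_{n,\beta}$. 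The summand $b(u_{n,\beta}^h, \psi^u_{n,\beta}, \psi^u_{n,\beta})$ vanishes by skew-symmetry, leaving exactly three contributions: $b(\eta^u_{n,\beta}, u_{n,\beta}, \psi^u_{n,\beta})$, $-b(\psi^u_{n,\beta}, u_{n,\beta}, \psi^u_{n,\beta})$, and $b(u_{n,\beta}^h, \eta^u_{n,\beta}, \psi^u_{n,\beta})$. For the first I would apply \eqref{b 0 2 1} to get $\nu C\|\eta^u_{n,\beta}\|\,\|u_{n,\beta}\|_{2}\,\|\nabla\psi^u_{n,\beta}\|$, then Young's inequality, then $\|\eta^u_{n,\beta}\|\le C(\theta)h^{r+2}(\|u\|_{\infty,r+2}+\|w\|_{\infty,r+2}+\|p\|_{\infty,r+1})$ from \eqref{Stokes-type projection} and $\|u_{n,\beta}\|_{2}\le C\|u\|_{\infty,2}$ (Assumption \ref{u_p_T_space} together with the uniform-in-$n$ boundedness of the DLN coefficients $\beta_\ell^{(n)}$ for $\varepsilon_n\in(-1,1)$); this yields the $h^{2r+4}\|u\|_{\infty,2}^2$ term. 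For the third I would use \eqref{b 1 1 1} to get $\nu C\|\nabla u_{n,\beta}^h\|\,\|\nabla\eta^u_{n,\beta}\|\,\|\nabla\psi^u_{n,\beta}\|$, then Young and $\|\nabla\eta^u_{n,\beta}\|\le C(\theta)h^{r+1}(\cdots)$ from \eqref{Stokes-type projection}, giving the $h^{2r+2}\|\nabla u_{n,\beta}^h\|^2$ term. For the middle term I would invoke \eqref{b 0 1 1 1}, which rewrites it as $\nu C^{\ast}\|\nabla u_{n,\beta}\|\,\|\psi^u_{n,\beta}\|^{1/2}\|\nabla\psi^u_{n,\beta}\|^{3/2}$, and then apply Young's inequality with exponents $4/3$ and $4$ to $\|\nabla\psi^u_{n,\beta}\|^{3/2}=(\|\nabla\psi^u_{n,\beta}\|^2)^{3/4}$; the full power $\|\nabla\psi^u_{n,\beta}\|^2$ is recovered with an arbitrarily small prefactor while the remainder is $\tfrac{2{C^{\ast}}^4\nu^4}{\mu^3}\|\nabla u_{n,\beta}\|^4\|\psi^u_{n,\beta}\|^2$. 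Summing the three absorbed $\|\nabla\psi^u_{n,\beta}\|^2$-coefficients to $\tfrac{25\mu}{64}$ finishes this part.

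The estimate \eqref{analytical b - numerical b-conclusion2} follows the same pattern: split
\begin{align*}
&b^*(u_{n,\beta}, T_{n,\beta}, \psi^T_{n,\beta}) - b^*(u_{n,\beta}^h, T_{n,\beta}^h, \psi^T_{n,\beta}) \\
&\quad = b^*(u_{n,\beta} - u_{n,\beta}^h, T_{n,\beta}, \psi^T_{n,\beta}) + b^*(u_{n,\beta}^h, T_{n,\beta} - T_{n,\beta}^h, \psi^T_{n,\beta}),
\end{align*}
substitute $u_{n,\beta}-u_{n,\beta}^h=\eta^u_{n,\beta}-\psi^u_{n,\beta}$ and $T_{n,\beta}-T_{n,\beta}^h=\eta^T_{n,\beta}-\psi^T_{n,\beta}$, and drop $b^*(u_{n,\beta}^h,\psi^T_{n,\beta},\psi^T_{n,\beta})=0$. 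The surviving terms are $b^*(\eta^u_{n,\beta},T_{n,\beta},\psi^T_{n,\beta})$ (bounded via the $b^*$-analog of \eqref{b 0 2 1}, \eqref{Stokes-type projection} and Young, giving the $h^{2r+4}\|T\|_{\infty,2}^2$ term), $b^*(u_{n,\beta}^h,\eta^T_{n,\beta},\psi^T_{n,\beta})$ (via the $b^*$-analog of \eqref{b 1 1 1}, the Ritz bound \eqref{Ritz projection} and Young, giving the $h^{2r+2}\|\nabla u_{n,\beta}^h\|^2\|T\|_{\infty,r+2}^2$ term), and $-b^*(\psi^u_{n,\beta},T_{n,\beta},\psi^T_{n,\beta})$, which by the $b^*$-analog of \eqref{b 0 1 1 1} is bounded by $C^{\ast}\|\psi^u_{n,\beta}\|^{1/2}\|\nabla\psi^u_{n,\beta}\|^{1/2}\|\nabla T_{n,\beta}\|\,\|\nabla\psi^T_{n,\beta}\|$; two successive applications of Young's inequality (first splitting off $\|\nabla\psi^T_{n,\beta}\|^2$, then $\|\nabla\psi^u_{n,\beta}\|^2$) leave $\tfrac{2{C^{\ast}}^4}{9\kappa^2\mu}\|\psi^u_{n,\beta}\|^2\|\nabla T_{n,\beta}\|^4$, with the absorbed coefficients collected into $\tfrac18\mu$ and $\tfrac{13}{16}\kappa$ respectively.

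The main obstacle is the handling of $b(\psi^u_{n,\beta}, u_{n,\beta}, \psi^u_{n,\beta})$ and $b^*(\psi^u_{n,\beta}, T_{n,\beta}, \psi^T_{n,\beta})$: one must pick precisely the interpolation-type bound \eqref{b 0 1 1 1} (so that $\|\psi^u_{n,\beta}\|$ occurs to the first power and $\|\nabla\psi^u_{n,\beta}\|$ to a fractional power) and then split with the correct Young exponents so that a clean $\|\nabla\psi^u_{n,\beta}\|^2$ is produced with a small constant while the nonabsorbable remainder carries the fourth power $\|\nabla u_{n,\beta}\|^4$ (respectively $\|\nabla T_{n,\beta}\|^4$) — exactly the quantities that reappear in the time-step condition \eqref{time_condition_eq2}. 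Tracking the precise constants $\tfrac{2{C^{\ast}}^4\nu^4}{\mu^3}$, $\tfrac{25\mu}{64}$, $\tfrac{2{C^{\ast}}^4}{9\kappa^2\mu}$, $\tfrac{13}{16}\kappa$ and verifying the uniform-in-$n$ boundedness of $\|u_{n,\beta}\|_2$, $\|\nabla u_{n,\beta}\|$, $\|T_{n,\beta}\|_2$ from Assumption \ref{u_p_T_space} are the only remaining care points; everything else is routine Cauchy--Schwarz and Young's inequality together with the projection estimates \eqref{Ritz projection} and \eqref{Stokes-type projection}.
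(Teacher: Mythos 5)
Your proposal is correct and follows essentially the same route as the paper: the identical three-term decomposition $b(\eta^u,u,\psi^u)-b(\psi^u,u,\psi^u)+b(u^h,\eta^u,\psi^u)$ (and its $b^*$ analogue), with \eqref{b 0 2 1} plus the projection bounds for the $\eta$-in-first-slot term, \eqref{b 1 1 1} for the $\eta$-in-second-slot term, and \eqref{b 0 1 1 1} with the $4/3$--$4$ Young splitting for the $\psi^u$ term, yielding the same constants $\tfrac{2{C^\ast}^4\nu^4}{\mu^3}$, $\tfrac{25\mu}{64}$, $\tfrac{2{C^\ast}^4}{9\kappa^2\mu}$, and $\tfrac{13}{16}\kappa$. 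No substantive differences to report.
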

    \begin{proof}
        The skew-symmetric property of the operator $b$ yields
        \begin{align}
        \begin{split}\label{b-b inequality}
        & \nu b(u_{n,\beta}, u_{n,\beta}, \psi^u_{n,\beta}) - \nu b(u_{n,\beta}^h, u_{n,\beta}^h, \psi^u_{n,\beta}) \\
        =& \nu b(u_{n,\beta} - u_{n,\beta}^h, u_{n,\beta}, \psi^u_{n,\beta}) 
        + \nu b(u_{n,\beta}^h, u_{n,\beta} - u_{n,\beta}^h, \psi^u_{n,\beta}) \\
        =& \nu b(\eta^u_{n,\beta}, u_{n,\beta}, \psi^u_{n,\beta}) - \nu b(\psi^u_{n,\beta}, u_{n,\beta}, \psi^u_{n,\beta}) + \nu b(u_{n,\beta}^h, \eta^u_{n,\beta}, \psi^u_{n,\beta}) 
        \end{split}
        \\
        \begin{split}\label{b-b inequality *}
        & b^*(u_{n,\beta}, T_{n,\beta}, \psi^T_{n,\beta}) - b^*(u_{n,\beta}^h, T_{n,\beta}^h, \psi^T_{n,\beta}) \\
        =& b^*(\eta^u_{n,\beta}, T_{n,\beta}, \psi^T_{n,\beta}) - b^*(\psi^u_{n,\beta}, T_{n,\beta}, \psi^T_{n,\beta}) + b^*(u_{n,\beta}^h, \eta^T_{n,\beta}, \psi^T_{n,\beta}) 
        \end{split}
        \end{align}
        We utilize the \eqref{b 0 2 1}, Poincar\'e inequality and Young's inequality to obtain:
        \begin{align} 
        \begin{split}
        \nu b(\eta^u_{n,\beta}, u_{n,\beta}, \psi^u_{n,\beta})
        &\leq C \nu \|\eta^u_{n,\beta}\| \|u_{n,\beta}\|_2 \|\psi^u_{n,\beta}\|_1 
        \label{b-term1} \\
        &\leq C \nu^{2} \mu^{-1} \|\eta^u_{n,\beta}\|^2 \|u_{n,\beta}\|_2^2 + \frac{1}{128}\mu \|\nabla \psi^u_{n,\beta}\|^2. 
        \end{split}	
        \\
        \begin{split}
        b^*(\eta^u_{n,\beta}, T_{n,\beta}, \psi^T_{n,\beta})
        &\leq C \kappa^{-1} \|\eta^u_{n,\beta}\|^2 \|T_{n,\beta}\|_2^2 + \frac{1}{32}\kappa \|\nabla \psi^T_{n,\beta}\|^2,
        \end{split}
        \end{align}
        We make use of \eqref{b 0 1 1 1} and Young's inequality
        \begin{align} \label{b-term2} 
        \begin{split}
        - \nu b(\psi^u_{n,\beta}, u_{n,\beta}, \psi^u_{n,\beta}) 
        \leq& C^{\ast} \nu \|\psi^u_{n,\beta}\|^{\frac{1}{2}}\|\nabla \psi^u_{n,\beta}\|^{\frac{1}{2}} \|\nabla u_{n,\beta}\| \|\nabla \psi^u_{n,\beta}\| \\
        \leq& \frac{{ 2 C^{\ast}}^{4} \nu^{4} } {\mu^3}  \|\psi^u_{n,\beta}\|^2 \|\nabla u_{n,\beta}\|^4 + \frac{3}{8}\mu \|\nabla \psi^u_{n,\beta}\|^2, \\
        - b^*(\psi^u_{n,\beta}, T_{n,\beta}, \psi^T_{n,\beta}) 
        \leq& C^{\ast} \|\psi^u_{n,\beta}\|^{\frac{1}{2}}\|\nabla \psi^u_{n,\beta}\|^{\frac{1}{2}} \|\nabla T_{n,\beta}\| \|\nabla \psi^T_{n,\beta}\| 
        \\
        \leq&  \frac{{C^{\ast}}^{2}}{3\kappa} \|\psi_{n,\beta}^u\| \|\nabla \psi_{n,\beta}^u\| \|\nabla T_{n,\beta}\|
        + \frac{3}{4}\kappa \|\nabla \psi^T_{n,\beta}\|^2
        \\
        \leq& \frac{2}{9} {C^{\ast}}^{4} \kappa^{-2}\mu^{-1} \|\psi_{n,\beta}^u\|^2 \|\nabla T_{n,\beta}\|^4
        + \frac{1}{8}\mu \|\nabla \psi^u_{n,\beta}\|^2 \\
        &+ \frac{3}{4}\kappa \|\nabla \psi^T_{n,\beta}\|^2.
        \end{split}
        \end{align}
        By \eqref{b 1 1 1} and Young's inequality
        \begin{align} \label{b-term3} 
        \begin{split}
            \nu b(u_{n,\beta}^h, \eta^u_{n,\beta}, \psi^u_{n,\beta}) 
            \leq& C \nu \| \nabla u_{n,\beta}^h \| \| \nabla \eta^u_{n,\beta} \| \| \nabla \psi^u_{n,\beta} \| \\
            \leq& C \nu^2 \mu^{-1}\| \nabla u_{n,\beta}^h \|^{2}  \| \nabla \eta^u_{n,\beta} \|^{2} + \frac{1}{128}\mu \|\nabla \psi^u_{n,\beta}\|^2, \\
            b^*(u_{n,\beta}^h, \eta^T_{n,\beta}, \psi^T_{n,\beta})
            \leq &C \kappa^{-1}\|\nabla u_{n,\beta}^h\|^2 \|\nabla \eta^T_{n,\beta}\|^2 + \frac{1}{32}\kappa \|\nabla \psi^T_{n,\beta}\|^2.
        \end{split}
        \end{align}
        We combine \eqref{b-b inequality} - \eqref{b-term3} and use approximation of Stokes-type projection in \eqref{Stokes-type projection} again
        to derive \eqref{analytical b - numerical b-conclusion1} and \eqref{analytical b - numerical b-conclusion2}.
    \end{proof}

    \begin{lem}\label{cubic term}
        Under Assumption \ref{u_p_T_space}, the following bound holds 
        \begin{align}
        &\lambda |(|u_{n,\beta}|^2 u_{n,\beta} - |\mathcal{S}_h u_{n,\beta}|^2\mathcal{S}_h u_{n,\beta},\psi_{n,\beta}^u )|
        \label{cubic term inequality} \\
        \leq & \frac{C(\theta) \lambda^2 h^{2r+2}}{\mu} \Big[ \| u \|_{L^{\infty}(L^{4})}^{4} 
        + h^{4r+4} \big( \| u \|_{\infty,r+2} 
        + \| w \|_{\infty,r+2} + \| p \|_{\infty,r+1} \big)^{4} \Big] \times \notag \\
        &\times \big( \| u \|_{\infty,r+2} 
        + \| w \|_{\infty,r+2} + \| p \|_{\infty,r+1} \big)^{2}
        + \frac{1}{64} \mu \|\nabla \psi^u_{n,\beta}\|^2. \notag 
        \end{align}
        for $1 \leq n \leq M-1$.
    \end{lem}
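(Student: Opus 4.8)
The plan is to reduce the cubic difference pointwise by the continuity estimate \eqref{vector_norm_inequality_2} of \Cref{vector_norm_inequality_lem}, and then to spread the resulting product over four $L^{4}(D)$ factors, which is the natural space for a cubic term. Applying \eqref{vector_norm_inequality_2} with $x=u_{n,\beta}(\cdot)$, $y=\mathcal{S}_{h}u_{n,\beta}(\cdot)$ and recalling $u_{n,\beta}-\mathcal{S}_{h}u_{n,\beta}=\eta^{u}_{n,\beta}$ gives, almost everywhere in $D$,
\begin{align*}
\big|\, |u_{n,\beta}|^{2}u_{n,\beta}-|\mathcal{S}_{h}u_{n,\beta}|^{2}\mathcal{S}_{h}u_{n,\beta}\,\big|
\le C\big(|u_{n,\beta}|+|\mathcal{S}_{h}u_{n,\beta}|\big)^{2}\,|\eta^{u}_{n,\beta}|.
\end{align*}
Multiplying by $|\psi^{u}_{n,\beta}|$, integrating over $D$, and using the generalized H\"older inequality with exponents $(4,4,4,4)$ (two of the factors being $|u_{n,\beta}|+|\mathcal{S}_{h}u_{n,\beta}|$) then yields
\begin{align*}
\lambda\big|\big(|u_{n,\beta}|^{2}u_{n,\beta}-|\mathcal{S}_{h}u_{n,\beta}|^{2}\mathcal{S}_{h}u_{n,\beta},\,\psi^{u}_{n,\beta}\big)\big|
\le C\lambda\big(\|u_{n,\beta}\|_{L^{4}}+\|\mathcal{S}_{h}u_{n,\beta}\|_{L^{4}}\big)^{2}\,\|\eta^{u}_{n,\beta}\|_{L^{4}}\,\|\psi^{u}_{n,\beta}\|_{L^{4}}.
\end{align*}

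Next I would invoke the two-dimensional embedding $H^{1}_{0}(D)\hookrightarrow L^{4}(D)$ together with the Poincar\'e inequality to get $\|\psi^{u}_{n,\beta}\|_{L^{4}}\le C\|\nabla\psi^{u}_{n,\beta}\|$, and then Young's inequality to peel off the dissipative term,
\begin{align*}
C\lambda\big(\|u_{n,\beta}\|_{L^{4}}+\|\mathcal{S}_{h}u_{n,\beta}\|_{L^{4}}\big)^{2}\|\eta^{u}_{n,\beta}\|_{L^{4}}\|\nabla\psi^{u}_{n,\beta}\|
\le \frac{C\lambda^{2}}{\mu}\big(\|u_{n,\beta}\|_{L^{4}}+\|\mathcal{S}_{h}u_{n,\beta}\|_{L^{4}}\big)^{4}\|\eta^{u}_{n,\beta}\|_{L^{4}}^{2}+\frac{\mu}{64}\|\nabla\psi^{u}_{n,\beta}\|^{2}.
\end{align*}
It then remains to estimate the two data-dependent factors, for which I set $\mathcal{N}:=\|u\|_{\infty,r+2}+\|w\|_{\infty,r+2}+\|p\|_{\infty,r+1}$. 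Since the DLN weights $\beta^{(n)}_{\ell}$ are bounded uniformly in $n$ for each fixed $\theta\in[0,1]$ (because $\varepsilon_{n}\in(-1,1)$ and, for $\theta<1$, $(1+\varepsilon_{n}\theta)^{2}\ge(1-\theta)^{2}>0$), the triangle inequality gives $\|u_{n,\beta}\|_{L^{4}}\le C(\theta)\|u\|_{L^{\infty}(L^{4})}$; writing $\mathcal{S}_{h}u_{n,\beta}=u_{n,\beta}-\eta^{u}_{n,\beta}$ and using the Stokes-type projection bound \eqref{Stokes-type projection}, $\|\eta^{u}_{n,\beta}\|_{L^{4}}\le C\|\nabla\eta^{u}_{n,\beta}\|\le C(\theta)h^{r+1}\mathcal{N}$. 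Combining with $(a+b)^{4}\le C(a^{4}+b^{4})$ gives $\big(\|u_{n,\beta}\|_{L^{4}}+\|\mathcal{S}_{h}u_{n,\beta}\|_{L^{4}}\big)^{4}\le C(\theta)\big(\|u\|_{L^{\infty}(L^{4})}^{4}+h^{4r+4}\mathcal{N}^{4}\big)$ and $\|\eta^{u}_{n,\beta}\|_{L^{4}}^{2}\le C(\theta)h^{2r+2}\mathcal{N}^{2}$, and multiplying these produces exactly the first term of \eqref{cubic term inequality}. For $\theta=1$ the argument is unchanged, the weights reducing to their bounded midpoint values.

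The H\"older bookkeeping and the repeated use of \eqref{Stokes-type projection} are routine; the only genuine choice is to control the test function $\psi^{u}_{n,\beta}$ \emph{solely} through its gradient, so that the cubic contribution is absorbed into the viscous dissipation $\mu\|\nabla\psi^{u}_{n,\beta}\|^{2}$ already present on the left-hand side of \eqref{error estimate L2 inequality} rather than into the weaker term $C\lambda\|\psi^{u}_{n,\beta}\|_{L^{4}}^{4}$. This is also the reason the rate on this contribution is only $h^{2r+2}$: the projection error enters here through $\|\eta^{u}_{n,\beta}\|_{L^{4}}\le C\|\nabla\eta^{u}_{n,\beta}\|=\mathcal{O}(h^{r+1})$, i.e.\ one power of $h$ below the $L^{2}$ projection estimate.
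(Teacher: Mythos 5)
Your proposal is correct and follows essentially the same route as the paper: pointwise continuity estimate \eqref{vector_norm_inequality_2}, a four-factor $L^4$ H\"older bound (the paper reaches the identical intermediate bound $C(\|u_{n,\beta}\|_{L^4}+\|\mathcal{S}_h u_{n,\beta}\|_{L^4})^2\|\eta^u_{n,\beta}\|_{L^4}\|\psi^u_{n,\beta}\|_{L^4}$ via two nested H\"older applications with exponents $(4/3,4)$), the embedding $H^1_0\hookrightarrow L^4$ with Poincar\'e to pass to gradients, Young's inequality to absorb $\frac{\mu}{64}\|\nabla\psi^u_{n,\beta}\|^2$, and the Stokes-type projection estimate \eqref{Stokes-type projection} for $\|\nabla\eta^u_{n,\beta}\|$. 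No gaps; the bookkeeping of the $\theta$-dependence and of the $h^{4r+4}$ term matches the stated bound.
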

    \begin{proof}
        By \eqref{vector_norm_inequality_2}, H\"older's inequality, Sobolev imbedding inequality, Poincar\'e inequality and triangle inequality 
        \begin{align}\label{cubic term inequality 1}
        &|(|u_{n,\beta}|^2 u_{n,\beta} - |\mathcal{S}_h u_{n,\beta}|^2\mathcal{S}_h u_{n,\beta}, \psi_{n,\beta}^u)| \notag
        \\
        \leq
        & C \Big(
        \int_{D} (|u_{n,\beta}|^2 u_{n,\beta} -|\mathcal{S}_h u_{n,\beta}|^2 \mathcal{S}_h u_{n,\beta})^{\frac{4}{3}}\mathrm{d}x
        \Big)^\frac{3}{4}
        \Big(
        \int_{D}| \psi_{n,\beta}^u|^4 \mathrm{d}x
        \Big)^\frac{1}{4} \notag
        \\
        \leq
        &
        C \bigg[
        \Big(
        \int_{D} (|u_{n,\beta}| + |\mathcal{S}_h u_{n,\beta}| )^4 \mathrm{d}x
        \Big)^\frac{2}{3}
        \Big(
        \int_{D} |u_{n,\beta} - \mathcal{S}_h u_{n,\beta}|^4 \mathrm{d}x
        \Big)^\frac{1}{3}
        \bigg]^\frac{3}{4}
        \|\psi_{n,\beta}^u\|_{L^4}
        \\
        \leq
        & C 
        \Big(
        \int_{D} (|u_{n,\beta}| + |\mathcal{S}_h u_{n,\beta}| )^4 \mathrm{d}x
        \Big)^\frac{1}{2} \|u_{n,\beta}-\mathcal{S}_h u_{n,\beta}\|_{L^4}
        \|\psi_{n,\beta}^u\|_{L^4} \notag
        \\ 
        \leq
        & C (\|u_{n,\beta}\|_{L^4(D)}^2 +\|\mathcal{S}_h u_{n,\beta}\|_{L^{4}(D)}^2 )
        \|\nabla \eta^u_{n,\beta}\| \|\nabla \psi^u_{n,\beta} \| \notag \\
        \leq& C (\|u_{n,\beta}\|_{L^4(D)}^2 + \|\eta^{u}_{n,\beta}\|_{1}^2 ) \|\nabla \eta^u_{n,\beta}\| \|\nabla \psi^u_{n,\beta}\|. \notag 
        \end{align}
        We apply Young's inequality and approximation in \eqref{Stokes-type projection} to \eqref{cubic term inequality 1} and  
        obtain \eqref{cubic term inequality}.
    \end{proof} 

    Now we deal with the terms in truncation error $\tau_n(u,w,p,T,\psi^u_{n,\beta},\psi^T_{n,\beta})$ in \eqref{truncation}. 
    \begin{lem}\label{analytical b - exact b with t n beta}
        Under \Cref{u_p_T_space}, we have 
        \begin{align}
        \begin{split}\label{analytical b - exact b with t n beta equation 1}
        &\nu \big(
        b(u_{n,\beta}, u_{n,\beta}, \psi^u_{n,\beta}) - b(u(t_{n,\beta}), u(t_{n,\beta}), \psi^u_{n,\beta})
        \big)
        \\
        \leq&
        \frac{\mu}{128}  \| \nabla \psi^u_{n,\beta} \|^2
        + \frac{C(\theta) \nu^2}{\mu} \|u\|_{\infty,1}^2 k_{\rm{max}}^3 
        \int_{t_{n-1}}^{t_{n+1}} \| \nabla u_{tt}\|^2 \mathrm{d}t,
        \end{split}
        \\
        \begin{split}\label{analytical b - exact b with t n beta equation 2}
        &b^*(u_{n,\beta}, T_{n,\beta}, \psi^T_{n,\beta}) - b^*(u(t_{n,\beta}), T(t_{n,\beta}), \psi^T_{n,\beta})
        \\
        \leq
        &\frac{\kappa}{64}  \| \nabla \psi^T_{n,\beta} \|^2
        + \frac{C(\theta)}{\kappa}
        \|u\|_{\infty,1}^2
        k_{\rm{max}}^3 
        \int_{t_{n-1}}^{t_{n+1}}
        \| \nabla T_{tt}\|^2 \mathrm{d}t
        \\&+   \frac{C(\theta)}{\kappa}
        \|T\|_{\infty,1}^2
        k_{\rm{max}}^3 
        \int_{t_{n-1}}^{t_{n+1}}
        \| \nabla u_{tt}\|^2 \mathrm{d}t,
        \end{split}
        \end{align}
        for $1 \leq n \leq M-1$.
    \end{lem}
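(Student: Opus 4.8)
The strategy is to reduce each difference of trilinear forms to a consistency error by exploiting bilinearity in the first two slots, and then to invoke the DLN consistency estimate of \Cref{n beta int tn-1 tn+1} with $r=1$. For the velocity term, write
\begin{align*}
b(u_{n,\beta}, u_{n,\beta}, \psi^u_{n,\beta}) - b(u(t_{n,\beta}), u(t_{n,\beta}), \psi^u_{n,\beta})
= b\big(u_{n,\beta} - u(t_{n,\beta}),\, u_{n,\beta},\, \psi^u_{n,\beta}\big)
+ b\big(u(t_{n,\beta}),\, u_{n,\beta} - u(t_{n,\beta}),\, \psi^u_{n,\beta}\big),
\end{align*}
and analogously for $b^\ast$, decomposing $b^\ast(u_{n,\beta},T_{n,\beta},\cdot) - b^\ast(u(t_{n,\beta}),T(t_{n,\beta}),\cdot)$ into $b^\ast(u_{n,\beta}-u(t_{n,\beta}),T_{n,\beta},\cdot)$ plus $b^\ast(u(t_{n,\beta}),T_{n,\beta}-T(t_{n,\beta}),\cdot)$.

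Next I would apply the estimate \eqref{b 1 1 1} (and its $b^\ast$ analogue) to every resulting term, so that each is controlled by a product of three gradient norms. Since the $\beta$-coefficients in \eqref{DLNcoeff} are bounded uniformly in $\varepsilon_n\in(-1,1)$ for fixed $\theta$ and sum to one, we get $\|\nabla u_{n,\beta}\|\le C(\theta)\|u\|_{\infty,1}$, $\|\nabla u(t_{n,\beta})\|\le\|u\|_{\infty,1}$, $\|\nabla T_{n,\beta}\|\le C(\theta)\|T\|_{\infty,1}$; the same argument works in the midpoint case $\theta=1$, where $u_{n,\beta}=\tfrac12(u_{n+1}+u_n)$. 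Applying Young's inequality with the diffusion coefficients then yields, for the velocity difference, a bound of the form $\tfrac{C\nu^2}{\mu}\|u\|_{\infty,1}^2\,\|u_{n,\beta}-u(t_{n,\beta})\|_1^2 + \tfrac{\mu}{128}\|\nabla\psi^u_{n,\beta}\|^2$ (splitting the admissible $\tfrac{\mu}{128}$ between the two summands), and similarly for $b^\ast$ the remainder distributes as $\tfrac{\kappa}{64}\|\nabla\psi^T_{n,\beta}\|^2$ over the two pieces.

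It remains to estimate $\|u_{n,\beta}-u(t_{n,\beta})\|_1$ and $\|T_{n,\beta}-T(t_{n,\beta})\|_1$. Here I invoke \Cref{n beta int tn-1 tn+1} with $r=1$: for $\theta\in[0,1)$, $\|u_{n,\beta}-u(t_{n,\beta})\|_1^2 \le C(\theta)(k_n+k_{n-1})^3\int_{t_{n-1}}^{t_{n+1}}\|u_{tt}\|_1^2\,\mathrm{d}t$, and for $\theta=1$ the factor is $k_n^3$; in either case $(k_n+k_{n-1})^3\le 8k_{\max}^3$. Since the boundary data for $u$ and $T$ are time-independent, $u_{tt}$ and $T_{tt}$ vanish on $\partial D$ (resp.\ on $\Gamma_1$), so Poincar\'e's inequality gives $\|u_{tt}\|_1\le C\|\nabla u_{tt}\|$ and $\|T_{tt}\|_1\le C\|\nabla T_{tt}\|$. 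Substituting and using Assumption \ref{u_p_T_space} (which makes $u_{tt}, T_{tt}\in L^2([0,T];H^{r+2}(D))$, hence the time integrals finite) produces exactly \eqref{analytical b - exact b with t n beta equation 1} and \eqref{analytical b - exact b with t n beta equation 2}.

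The main obstacle is not a deep one: it is mostly bookkeeping of constants so that the coefficients of $\|\nabla\psi^u_{n,\beta}\|^2$ and $\|\nabla\psi^T_{n,\beta}\|^2$ land on $\tfrac{\mu}{128}$ and $\tfrac{\kappa}{64}$ respectively. The only genuine care needed is (i) justifying the $\theta$-uniform bound $\|\nabla u_{n,\beta}\|\le C(\theta)\|u\|_{\infty,1}$ from the explicit $\beta^{(n)}_\ell$ in \eqref{DLNcoeff}, and (ii) correctly routing the consistency estimate of \Cref{n beta int tn-1 tn+1} through Poincar\'e to replace $\|u_{tt}\|_1$, $\|T_{tt}\|_1$ by the gradient norms $\|\nabla u_{tt}\|$, $\|\nabla T_{tt}\|$ that appear in the claimed bounds; everything else is a direct application of \eqref{b 1 1 1} and Young's inequality.
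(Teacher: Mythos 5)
Your proposal is correct and follows essentially the same route as the paper's proof: the identical bilinear decomposition into $b(u_{n,\beta}-u(t_{n,\beta}),u_{n,\beta},\cdot)+b(u(t_{n,\beta}),u_{n,\beta}-u(t_{n,\beta}),\cdot)$ (and its $b^\ast$ analogue), followed by \eqref{b 1 1 1}, Young's inequality, and the consistency estimate of \Cref{n beta int tn-1 tn+1}. The only cosmetic difference is that the paper applies the consistency lemma directly to the gradient to get $\|\nabla(u_{n,\beta}-u(t_{n,\beta}))\|^2\le C(\theta)(k_n+k_{n-1})^3\int\|\nabla u_{tt}\|^2\,\mathrm{d}t$, whereas you route through $\|\cdot\|_1$ and then use Poincar\'e on $u_{tt}$, $T_{tt}$; both yield the stated bounds.
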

    \begin{proof}
        We reformulate the difference between the two nonlinear terms as follows:
        \begin{align}
        \begin{split}\label{b-b n beta inequality}
        &\nu \big(
        b(u_{n,\beta}, u_{n,\beta}, \psi^u_{n,\beta}) - b(u(t_{n,\beta}), u(t_{n,\beta}), \psi^u_{n,\beta})
        \big)
        \\
        =&\nu b \big(u_{n,\beta} - u(t_{n,\beta}), u_{n,\beta}, \psi^u_{n,\beta} \big) 
        + \nu  b \big(u(t_{n,\beta}), u_{n,\beta} - u(t_{n,\beta}), \psi^u_{n,\beta} \big).
        \end{split}
        \end{align}
        We apply \eqref{b 1 1 1}, Lemma \ref{n beta int tn-1 tn+1} and Young's inequality to \eqref{b-b n beta inequality}
        \begin{align} \label{b-b n beta inequality second}
        \begin{split}
        &\nu \big( b(u_{n,\beta} - u(t_{n,\beta}), u_{n,\beta}, \psi^u_{n,\beta}) \big)
        + \nu \big( b(u(t_{n,\beta}), u_{n,\beta} - u(t_{n,\beta}), \psi^u_{n,\beta}) \big)
        \\
        \leq&
        C\nu (\| \nabla u_{n,\beta}\| + \| \nabla u(t_{n,\beta})\| )  \| \nabla (u_{n,\beta} - u(t_{n,\beta}))\| \| \nabla \psi^u_{n,\beta}\| 		
        \\ 
        \leq&
        \frac{\mu}{128}  \| \nabla \psi^u_{n,\beta} \|^2
        + C \nu^2 \mu^{-1}
        \|u\|_{\infty,1}^2
        \|\nabla (u_{n,\beta} - u(t_{n,\beta}))\|^2
        \\
        \leq&
        \frac{\mu}{128}  \| \nabla \psi^u_{n,\beta} \|^2
        + C(\theta) \nu^2 \mu^{-1}
        \|u\|_{\infty,1}^2 (k_{n-1}+k_n)^3 
        \int_{t_{n-1}}^{t_{n+1}} \| \nabla u_{tt}\|^2 \mathrm{d}t,
        \end{split}
        \end{align}
        which completes the proof of \eqref{analytical b - exact b with t n beta equation 1}. 
        The estimate \eqref{analytical b - exact b with t n beta equation 2} can be established analogously by applying the bound \eqref{b 1 1 1} together with Lemma \ref{n beta int tn-1 tn+1} and Young's inequality, and is thus omitted here for brevity.
    \end{proof}

    \begin{lem} \label{truncation error estimate}
        Under Assumption \ref{u_p_T_space},
        the truncation term $|\tau_n(u,w,p,T,\psi^u_{n,\beta},\psi^T_{n,\beta}) |$ in \eqref{truncation} has following error estimate
        \begin{align} \label{truncation error estimate equation}  
        \begin{split}
        &|\tau(u,w,p,\psi^u_{n,\beta})|   \\
        &\leq 
        C(\theta) \big( \frac{\nu^2}{\mu} \| u \|_{\infty,2}^2 
        + \frac{\lambda^2}{\mu} \| u \|_{L^{\infty}(L^{4})}^{4} 
        + \frac{1}{\kappa}\| T \|_{\infty,2}^2  \big)
        k_{\rm{max}}^3 \int_{t_{n-1}}^{t_{n+1}}\|\nabla u_{tt}\|^2  \mathrm{d}t  \\
        &
        + \frac{C(\theta)}{\kappa}
        \|u\|_{\infty,2}^2
        k_{\rm{max}}^3 
        \int_{t_{n-1}}^{t_{n+1}}
        \| \nabla T_{tt}\|^2 \mathrm{d}t+
        \frac{\mu}{4} \|\nabla \psi^u_{n,\beta}\|^2 +\frac{\kappa}{32} \|\nabla \psi^T_{n,\beta}\|^2 \\
        &+ \frac{C(\theta) k_{\rm{max}}^3}{\mu} \int_{t_{n-1}}^{t_{n+1}} \!\big(
        \|u_{ttt}\|^2 \!+\! \mu^2 \|\nabla u_{tt}\|^2 \!+\! \rho^2 \|u_{tt}\|^2 \!+\! \gamma^{2} \|\nabla w_{tt}\|^2 
        \\
        &\qquad \qquad \qquad \qquad + \| p_{tt}\|^2 \!+\sigma^2 |\xi |^2\|T_{tt}\|^2 
        \big)	 \mathrm{d}t   \\
        &+ \frac{C(\theta) k_{\rm{max}}^3}{\kappa} \int_{t_{n-1}}^{t_{n+1}} \!\big(
        \|T_{ttt}\|^2 \!+\! \kappa^2 \|\nabla T_{tt}\|^2 \big)	 \mathrm{d}t.  
        \end{split}
        \end{align}
        for $1 \leq n \leq M-1$.
    \end{lem}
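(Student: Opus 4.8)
The plan is to estimate $|\tau_n(u,w,p,T,\psi^u_{n,\beta},\psi^T_{n,\beta})|$ term by term. Recall from the proof of Lemma \ref{error estimate L2 lem} that $\tau_n$ splits as $\tau_n = \mathcal{T}_{n,1}(u,w,p,T,\psi^u_{n,\beta}) + \mathcal{T}_{n,2}(T,\psi^T_{n,\beta})$, so I would treat the momentum residual (tested against $\psi^u_{n,\beta}$) and the temperature residual (tested against $\psi^T_{n,\beta}$) separately, applying Cauchy--Schwarz, Poincar\'e, and Young's inequality throughout, with the consistency Lemma \ref{n beta int tn-1 tn+1} controlling every time-discretization residual.

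First I would handle the two temporal-consistency residuals $\big(\frac{1}{\widehat k_n}u_{n,\alpha} - u_t(t_{n,\beta}),\psi^u_{n,\beta}\big)$ and $\big(\frac{1}{\widehat k_n}T_{n,\alpha} - T_t(t_{n,\beta}),\psi^T_{n,\beta}\big)$: Cauchy--Schwarz and Poincar\'e turn the test factor into $\|\nabla\psi^u_{n,\beta}\|$ (resp. $\|\nabla\psi^T_{n,\beta}\|$), then the second estimate of Lemma \ref{n beta int tn-1 tn+1} (with $r=0$) bounds the residuals by $C(\theta)(k_n+k_{n-1})^3\int_{t_{n-1}}^{t_{n+1}}\|u_{ttt}\|^2\,\mathrm{d}t$ and $C(\theta)(k_n+k_{n-1})^3\int_{t_{n-1}}^{t_{n+1}}\|T_{ttt}\|^2\,\mathrm{d}t$, and Young's inequality splits off small multiples of $\|\nabla\psi^u_{n,\beta}\|^2$, $\|\nabla\psi^T_{n,\beta}\|^2$. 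The linear diffusion/reaction/coupling residuals $\mu(\nabla(u_{n,\beta}-u(t_{n,\beta})),\nabla\psi^u_{n,\beta})$, $\gamma(\nabla(w_{n,\beta}-w(t_{n,\beta})),\nabla\psi^u_{n,\beta})$, $\rho(u_{n,\beta}-u(t_{n,\beta}),\psi^u_{n,\beta})$, $(p_{n,\beta}-p(t_{n,\beta}),\nabla\cdot\psi^u_{n,\beta})$, $\sigma\xi(T_{n,\beta}-T(t_{n,\beta}),\psi^u_{n,\beta})$ and the temperature analogue $\kappa(\nabla(T_{n,\beta}-T(t_{n,\beta})),\nabla\psi^T_{n,\beta})$ are dealt with identically, now using the first estimate of Lemma \ref{n beta int tn-1 tn+1} (taking $r=0$ or $r=1$ as needed, and using that the exact solutions vanish on $\partial D$ so $\|v\|_1\le C\|\nabla v\|$) to produce the combined $\int_{t_{n-1}}^{t_{n+1}}\big(\mu^2\|\nabla u_{tt}\|^2 + \rho^2\|u_{tt}\|^2 + \gamma^2\|\nabla w_{tt}\|^2 + \|p_{tt}\|^2 + \sigma^2|\xi|^2\|T_{tt}\|^2 + \kappa^2\|\nabla T_{tt}\|^2\big)\,\mathrm{d}t$ contributions, with $\|\nabla\cdot\psi^u_{n,\beta}\|\le\sqrt2\,\|\nabla\psi^u_{n,\beta}\|$.

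For the two transport differences $\nu\big(b(u_{n,\beta},u_{n,\beta},\psi^u_{n,\beta}) - b(u(t_{n,\beta}),u(t_{n,\beta}),\psi^u_{n,\beta})\big)$ and $b^*(u_{n,\beta},T_{n,\beta},\psi^T_{n,\beta}) - b^*(u(t_{n,\beta}),T(t_{n,\beta}),\psi^T_{n,\beta})$ I would simply invoke Lemma \ref{analytical b - exact b with t n beta}, which already delivers the $\frac{\nu^2}{\mu}\|u\|_{\infty,2}^2 k_{\max}^3\int\|\nabla u_{tt}\|^2$, $\frac{1}{\kappa}\|u\|_{\infty,2}^2 k_{\max}^3\int\|\nabla T_{tt}\|^2$ and $\frac{1}{\kappa}\|T\|_{\infty,2}^2 k_{\max}^3\int\|\nabla u_{tt}\|^2$ terms, after bounding $\|\cdot\|_{\infty,1}\le\|\cdot\|_{\infty,2}$. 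For the cubic residual $\lambda\big(|u_{n,\beta}|^2u_{n,\beta} - |u(t_{n,\beta})|^2u(t_{n,\beta}),\psi^u_{n,\beta}\big)$ I would apply the continuity estimate \eqref{vector_norm_inequality_2} pointwise, then a H\"older triple measuring $(|u_{n,\beta}|+|u(t_{n,\beta})|)^2$ in $L^2$-after-squaring (i.e. $|u_{n,\beta}|+|u(t_{n,\beta})|$ in $L^4$), $u_{n,\beta}-u(t_{n,\beta})$ in $L^4$, and $\psi^u_{n,\beta}$ in $L^4$, use the $2$D Sobolev embedding $H^1\hookrightarrow L^4$ with Poincar\'e to replace $\|u_{n,\beta}-u(t_{n,\beta})\|_{L^4}$ by $\|\nabla(u_{n,\beta}-u(t_{n,\beta}))\|$ and $\|\psi^u_{n,\beta}\|_{L^4}$ by $\|\nabla\psi^u_{n,\beta}\|$, bound $\|u_{n,\beta}\|_{L^4}+\|u(t_{n,\beta})\|_{L^4}\le C\|u\|_{L^\infty(L^4)}$, then apply Lemma \ref{n beta int tn-1 tn+1} (with $r=0$, plus the boundary vanishing) for $\|\nabla(u_{n,\beta}-u(t_{n,\beta}))\|^2\le C(\theta)(k_n+k_{n-1})^3\int\|\nabla u_{tt}\|^2$, and finish with Young's inequality; this yields the $\frac{\lambda^2}{\mu}\|u\|_{L^\infty(L^4)}^4 k_{\max}^3\int\|\nabla u_{tt}\|^2$ term.

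Finally I would sum all the partial bounds, choosing the individual Young-inequality constants so that the total coefficient of $\|\nabla\psi^u_{n,\beta}\|^2$ is at most $\frac{\mu}{4}$ and that of $\|\nabla\psi^T_{n,\beta}\|^2$ at most $\frac{\kappa}{32}$, and use $k_n+k_{n-1}\le 2k_{\max}$ to turn every $(k_n+k_{n-1})^3$ into $C k_{\max}^3$, arriving at \eqref{truncation error estimate equation}. I expect the cubic residual to be the only delicate point: the H\"older triple must be chosen so that it leaves exactly one $\|\nabla\psi^u_{n,\beta}\|$ to absorb, produces only $\|u\|_{L^\infty(L^4)}$-type norms of the exact solution (so Assumption \ref{u_p_T_space} suffices and no extra regularity is needed), and still lets the consistency lemma convert the time-interpolation error into $k_{\max}^3\int\|\nabla u_{tt}\|^2$; the remaining terms are routine Cauchy--Schwarz/Young bookkeeping once Lemmas \ref{n beta int tn-1 tn+1} and \ref{analytical b - exact b with t n beta} are available.
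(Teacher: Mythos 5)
Your proposal matches the paper's proof essentially step for step: the same term-by-term decomposition of $\tau_n$ into the momentum and temperature residuals, the same use of Cauchy--Schwarz/Poincar\'e/Young with the consistency Lemma \ref{n beta int tn-1 tn+1} for the temporal and linear residuals, the same invocation of Lemma \ref{analytical b - exact b with t n beta} for the transport differences, and the same continuity-plus-H\"older-plus-Sobolev treatment of the cubic term (which is exactly the paper's "similar argument to \eqref{cubic term inequality 1}"). The argument is correct and no further comparison is needed.
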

    \begin{proof}
        By similar argument to \eqref{cubic term inequality 1}, 
        \begin{align} \label{cubic term truncation inequality 2}
        \begin{split}
            &\lambda|(|u_{n,\beta}|^2 u_{n,\beta} - |u(t_{n,\beta})|^2u(t_{n,\beta}), \psi^u_{n,\beta})|
            \\
            \leq
            & C \lambda \big( \| u_{n,\beta} \|_{L^{4}}^2 + \| u(t_{n,\beta}) \|_{L^{4}}^2  \big) \big\| \nabla \big( u_{n,\beta} - u(t_{n,\beta}) \big) \big\| \| \nabla \psi^u_{n,\beta} \| \\
            \leq& C(\theta) \frac{\lambda^2}{\mu} \big( \| u_{n,\beta} \|_{L^{4}}^4 + \| u(t_{n,\beta}) \|_{L^{4}}^4  \big) 
            (k_{n-1}+k_{n})^3 \int_{t_{n-1}}^{t_{n+1}}\|\nabla u_{tt}\|^2  \mathrm{d}t \\
            &+ \frac{\mu}{128} \| \nabla \psi^u_{n,\beta}\|^2. 
        \end{split}
        \end{align}
        We utilize Cauchy-Schwarz inequality, Poincar\'e inequality, Young's inequality, along with Lemma \ref{n beta int tn-1 tn+1}
        \begin{align}
            \Big( \frac{1}{\widehat{k}_n} u_{n,\alpha} - u_t(t_{n,\beta}), \psi_{n,\beta}^u \Big) 
            \leq& C \mu^{-1} \Big\| \frac{1}{\widehat{k}_n} u_{n,\alpha} - u_t(t_{n,\beta}) \Big\|^2 + \frac{\mu}{128} \| \nabla \psi_{n,\beta}^u\|^2
            \label{cubic term truncation inequality 3} \\
            \leq&
            \frac{C(\theta)}{\mu} (k_{n-1}+k_{n})^3 \int_{t_{n-1}}^{t_{n+1}}\|u_{ttt}\|^2  \mathrm{d}t
            + \frac{\mu}{128} \| \nabla \psi_{n,\beta}^u\|^2,
            \notag \\
            (\frac{1}{\widehat{k}_n} T_{n,\alpha} - T_t(t_{n,\beta}), \psi_{n,\beta}^T) 
            \leq&
            \frac{C(\theta)}{\kappa} (k_{n-1}+k_{n})^3 \int_{t_{n-1}}^{t_{n+1}}\|T_{ttt}\|^2  \mathrm{d}t
            +\frac{\kappa}{64} \| \nabla \psi_{n,\beta}^T\|^2, \notag
            \\
            \mu (\nabla (u_{n,\beta} -u(t_{n,\beta})), \nabla \psi_{n,\beta}^u) 
            \leq& 
            C(\theta) \mu (k_{n-1}+k_{n})^3 \int_{t_{n-1}}^{t_{n+1}}\|\nabla u_{tt}\|^2  \mathrm{d}t
            +\frac{\mu}{128}\| \nabla \psi_{n,\beta}^u\|^2,
            \notag 
            \\
            \kappa (\nabla (T_{n,\beta} -T(t_{n,\beta})), \nabla \psi_{n,\beta}^T)
            \leq &
            C(\theta) \kappa (k_{n-1}+k_{n})^3 \int_{t_{n-1}}^{t_{n+1}}\| \nabla T_{tt}\|^2  \mathrm{d}t
            +\frac{\kappa}{64}\| \nabla \psi_{n,\beta}^T\|^2, \notag
            \\
            \rho (u_{n,\beta}-u(t_{n,\beta}), \psi_{n,\beta}^u)
            \leq &
            C(\theta) \frac{\rho^2}{\mu} (k_{n-1}+k_{n})^3 \int_{t_{n-1}}^{t_{n+1}}\|u_{tt}\|^2  \mathrm{d}t
            + \frac{\mu}{128} \| \nabla \psi_{n,\beta}^u\|^2,
            \notag \\
            \gamma(\nabla (w_{n,\beta} - w(t_{n,\beta})), \nabla\psi_{n,\beta}^u)
            \leq & 
            C(\theta) \frac{\gamma^2}{\mu} (k_{n-1}+k_{n})^3 \int_{t_{n-1}}^{t_{n+1}}\|\nabla w_{tt}\|^2  \mathrm{d}t
            +\frac{\mu}{128}  \| \nabla \psi_{n,\beta}^u\|^2,
            \notag \\
            (p_{n,\beta} - p(t_{n,\beta}), \nabla \cdot \psi_{n,\beta}^u)
            \leq & 
            \frac{C(\theta)}{\mu} (k_{n-1}+k_{n})^3 \int_{t_{n-1}}^{t_{n+1}}\| p_{tt}\|^2  \mathrm{d}t
            +\frac{\mu}{128} \| \nabla \psi_{n,\beta}^u\|^2,
            \notag \\
            \sigma \xi(T_{n,\beta} - T(t_{n,\beta}),\psi_{n,\beta}^u)
            \leq &
            \frac{C(\theta)\sigma^2 |\xi |^2 }{\mu}(k_{n-1}+k_{n})^3 \int_{t_{n-1}}^{t_{n+1}}\|T_{tt}\|^2  \mathrm{d}t
            +\frac{\mu}{128} \|\nabla \psi_{n,\beta}^u\|^2. \notag 
        \end{align}
        We combine \eqref{cubic term truncation inequality 2}, \eqref{cubic term truncation inequality 3} and Lemma \ref{analytical b - exact b with t n beta} to have \eqref{truncation error estimate equation}.
    \end{proof}

    Now we prove Theorem \ref{Intermediate conclusion}.
    \begin{proof}
        We sum \eqref{formulation 1} over $n$ from $1$ to $m-1$, combine Lemma \ref{derivation term} - \ref{truncation error estimate} and use \eqref{Stability of DLN inequality} in \Cref{Stability of DLN method}
        \begin{align*}
            &\frac{1+\theta}{4} \big( \|\psi^u_m\|^2 + \|\psi^T_m\|^2 \big)
            +\frac{ \widehat{k}_n}{8} \sum_{n=1}^{m-1} \big( \mu \| \nabla \psi^u_{n,\beta}\|^2 + \kappa \| \nabla \psi^T_{n,\beta}\|^2 \big)
            +\gamma \sum_{n=1}^{m-1}\widehat{k}_n\| \psi^w_{n,\beta}\|^2 
            \\
            &\leq  C_{\beta}^{(m-1)} \widehat{k}_{m-1}   \Big\{ \Big(|\rho| +
            \frac{ 2{C^{\ast}}^{4} \nu^{4}}{\mu^{3}}  \|\nabla u_{m-1,\beta}\|^4
            +\frac{2 {C^{\ast}}^{4} }{9 \kappa^2 \mu} \|\nabla T_{m-1,\beta} \|^4
            \Big) \times
            \\
            & \times \big( \|\psi^u_{m}\|^2 + \|\psi^u_{m-1}\|^2 + \|\psi^u_{m-2}\|^2 \big)  
            + \frac{C_P^2 \sigma^2 |\xi|^2 }{\mu}
            \big( \|\psi^T_{m}\|^2 + \|\psi^T_{m-1}\|^2 + \|\psi^T_{m-2}\|^2 \big)  \Big\}
            \\
            &+ \sum_{n=1}^{m-2} \Big( |\rho| + \frac{ 2{C^{\ast}}^{4} \nu^{4}}{\mu^{3}}  \|\nabla u_{n,\beta}\|^4
            +\frac{2 {C^{\ast}}^{4} }{9 \kappa^2 \mu} \|\nabla T_{n,\beta} \|^4\Big)  \widehat{k}_n \|\psi^u_{n,\beta}\|^2 
            + \frac{C_P^2 \sigma^2 |\xi|^2 }{\mu} \sum_{n=1}^{m-2}   \widehat{k}_n \|\psi^T_{n,\beta}\|^2  \\
            &+ \frac{ C(\theta) t^{\ast} \rho^2 h^{2r+4}}{\mu} \Big[ \big( \| u \|_{\infty,r+2} +  \| w \|_{\infty,r+2} + \| p \|_{\infty,r+1} \big)^{2} + \| T \|_{\infty,r+2}^2 \Big] \\
            &+\frac{C(\theta) h^{2r+4}}{\mu} \big( \| u_t \|_{2,r+2}^2 + \| w_t \|_{2,r+2}^2 + \| p_t \|_{2,r+1}^2  \big)
            + \frac{C(\theta) h^{2r+4}}{\kappa} \| T_t \|_{2,r+2}^2 \\
            &+ \frac{ C(\theta) t^{\ast} \sigma^2 |\xi |^2 h^{2r+4}}{\mu}  \| T \|_{\infty,r+2} ^{2} 
            + \frac{C(\theta, \mathcal{E}_1, \mathcal{E}_0, t^{\ast}) h^{2r+2}}{\kappa} \| T \|_{\infty,r+2}^{2}  \\
            &+ \frac{C(\theta) t^{\ast} \nu^{2} h^{2r+4}}{\mu} \| u \|_{\infty,2}^{2} 
            \big( \| u \|_{\infty,r+2} + \| w \|_{\infty,r+2} + \| p \|_{\infty,r+1} \big)^{2}  \\
            &+ \!\frac{C  (\theta, \mathcal{E}_1, \mathcal{E}_0, t^{\ast}) \nu^{2} h^{2r+2}}{\mu} 
            \big( \| u \|_{\infty,r+2} + \| w \|_{\infty,r+2} + \| p \|_{\infty,r+1} \big)^{2} \\
            &+ \frac{C(\theta) t^{\ast} h^{2r+4}}{\kappa} \| T \|_{\infty,2}^{2} 
            \big( \| u \|_{\infty,r+2} + \| w \|_{\infty,r+2} + \| p \|_{\infty,r+1} \big)^{2} \\
            &+ \frac{C(\theta) t^{\ast}  \lambda^2 h^{2r+2}}{\mu} \Big[ \| u \|_{L^{\infty}(L^{4})}^{4} 
            + h^{4r+4} \big( \| u \|_{\infty,r+2} + \| w \|_{\infty,r+2} + \| p \|_{\infty,r+1} \big)^{4} \Big] \times \\
            &\times \big( \| u \|_{\infty,r+2} + \| w \|_{\infty,r+2} + \| p \|_{\infty,r+1} \big)^{2} \\
            &+ C(\theta) k_{\rm{max}}^4 \big( \frac{\nu^2}{\mu} \| u \|_{\infty,2}^2 + \frac{\lambda^2}{\mu} \| u \|_{L^{\infty}(L^{4})}^{4} + \frac{1}{\kappa}\| T \|_{\infty,2}^2  \big)  \|\nabla u_{tt}\|_{2,0}^2  \\
            &+ \frac{C(\theta) k_{\rm{max}}^4}{\kappa} \|u\|_{\infty,2}^2 \| \nabla T_{tt}\|_{2,0}^2
            + \frac{C(\theta) k_{\rm{max}}^4}{\kappa} \big( \|T_{ttt}\|_{2,0}^2 \!+\! \kappa^2 \|\nabla T_{tt}\|_{2,0}^2 \big) \\
            &+ \frac{C(\theta) k_{\rm{max}}^4}{\mu} \big( \|u_{ttt}\|_{2,0}^2 +  \mu^2 \|\nabla u_{tt}\|_{2,0}^2
            + \rho^2 \|u_{tt}\|_{2,0}^2 \!+\! \gamma^{2} \|\nabla w_{tt}\|_{2,0}^2 \\
            &\qquad \qquad \qquad + \| p_{tt}\|_{2,0}^2 + \sigma^2 |\xi |^2 \|T_{tt}\|_{2,0}^2 \big).
        \end{align*}
        We assume that $\|\psi^u_1\|^2$, $\|\psi^u_0\|^2$, $\|\psi^T_1\|^2$ and   $\|\psi^T_0\|^2$ are of order $\mathcal{O} (h^{2r+2}, k_{\rm{max}}^4)$ and apply discrete Gr\"onwall inequality \cite[p.369]{MR1043610} along with time step restriction \eqref{time_condition_eq2} in 
        \Cref{time_condition_2} to the above inequality 
        \begin{align} \label{error estimate psi gamma inequality}
        \begin{split}
            &\big( \|\psi^u_m\|^2 +\|\psi^T_m\|^2 \big)
            + \sum_{n=1}^{m-1} \widehat{k}_n \big( \|\nabla \psi^u_{n,\beta}\|^2 + \|\nabla \psi^T_{n,\beta}\|^2 \big)
            + \sum_{n=1}^{m-1} \widehat{k}_n \| \psi^w_{n,\beta}\|^2 \\
            &\leq \mathcal{O}(h^{2r+2} + k_{\max}^4).
        \end{split}
        \end{align}
        By triangle inequality, \eqref{Ritz projection}, \eqref{Stokes-type projection} and \eqref{error estimate psi gamma inequality}
        \begin{align}\label{bound on the velocity error}
        \begin{split}
            &\max_{0 \leq n \leq M}  \| e_n^u \|^2  
            \leq \max_{0 \leq n \leq M} \|\eta^u_n\|^2 + \max_{0 \leq n \leq M} \|\psi^u_n\|^2 \leq \mathcal{O}(h^{2r+2}+ k_{\max}^4), \\
            &\max_{0 \leq n \leq M}  \|e^T_m\|^2 
            \leq \max_{0 \leq n \leq M} \|\eta^u_T\|^2 + \max_{0 \leq n \leq M} \|\psi^T_n\|^2 \leq \mathcal{O}(h^{2r+2}+ k_{\max}^4), \\
        \end{split}
        \end{align}
        Similarly, 
        \begin{align}\label{nabla u-u t n beta inequality}
            &\sum_{n=1}^{M-1} \widehat{k}_n \big( \|\nabla e_{n,\beta}^u \|^2 + \|\nabla e_{n,\beta}^T \|^2 \big)
            + \sum_{n=1}^{M-1} \widehat{k}_n \| e_{n,\beta}^{w} \|^2 \\
            \leq& \sum_{n=1}^{M-1} \widehat{k}_n \big( \|\nabla \psi^u_{n,\beta}\|^2 
            + \|\nabla \eta^u_{n,\beta}\|^2 \big)
            + \sum_{n=1}^{M-1} \widehat{k}_n \big( \|\nabla \psi^T_{n,\beta}\|^2 
            + \|\nabla \eta^T_{n,\beta}\|^2 \big) \notag \\
            &+ \sum_{n=1}^{M-1} \widehat{k}_n \big( \|\nabla \psi^w_{n,\beta}\|^2 
            + \|\nabla \eta^w_{n,\beta}\|^2 \big) \notag \\
            \leq& \mathcal{O}(h^{2r+2} + k_{\max}^4). \notag 
        \end{align}
        We combine \eqref{bound on the velocity error} and \eqref{nabla u-u t n beta inequality} to achieve \eqref{proof of max u-u^h T-T^h error}.
    \end{proof}

    \subsection{Error estimate for velocity and temperature in $H^1$-norm} \ 
    To carry out the error estimate for velocity $u$ in $H^1$-norm, we need the following extra restrictions about the uniform mesh diameter and time step size
    \begin{align}
    \label{time-diameter-condition}
    k_{\max}^{3} \leq h, \quad h^{2r+2} \leq k_{\min} = \min_{0 \leq n \leq M-1} \{ k_{n} \},
    \quad \frac{k_{\max}}{k_{\min}} \leq C
    \end{align}
    for some $C >0$. 

    \begin{theorem}\label{Intermediate conclusion 2}
        Suppose Assumptions \ref{u_p_T_space}, \ref{time_condition_2} and 
        conditions \eqref{time-diameter-condition} hold. 
        If the time step size and uniform mesh diameter are sufficiently small such that for all $1 \leq n \leq M-1$
        \begin{align}
        \label{time-condition-H1}
        C(\theta) \Big[ \frac{\nu^2 }{h} \| \nabla e_{n,\beta}^u \|^{2} +\frac{1}{h} \| \nabla e_{n,\beta}^T \|^{2} 
        + \lambda^2 \big( \|u_{n,\beta}\|_{1}^4 + \| \nabla e_{n,\beta}^u \|_{1}^4 \big) \Big]
        \widehat{k}_{n} \leq \frac{\mu(1+\theta)}{4}, 
        \end{align}
        for some $C(\theta)>0$, then Scheme \ref{fully discrete formulations scheme} has the following error estimates 
        \begin{equation} \label{proof of max nabla u-u^h T-T^h error}
        \max_{0 \leq n \leq M}  \|e_{n}^{u} \|_1^{2}+\max_{0 \leq n \leq M}  \|e_{n}^{T} \|_1^{2}
        + \max_{0 \leq n \leq M}  \|e_{n}^{w} \|^{2}
        + \sum_{n=1}^{M-1} \widehat{k}_n \|\widehat{k}_n^{-1} e^u_{n,\alpha}\|^2
        \leq
        \mathcal{O}(h^{2r+2}  + k_{\max}^4).
        \end{equation}
    \end{theorem}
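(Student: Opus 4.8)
The plan is to test the error equations against the \emph{discrete time derivative} rather than against the weighted midpoint value used in the $L^2$ analysis; this is what makes the $H^1$-norm appear on the left. Since $u_n^h$ and $\mathcal{S}_h u(t_n)$ are both discretely divergence free, $\psi^u_{n,\alpha}\in V_h$, so choosing $v^h=\psi^u_{n,\alpha}$ in \eqref{error equation 1} eliminates the pressure term and leaves $\widehat{k}_n^{-1}\|\psi^u_{n,\alpha}\|^2$ together with $\mu(\nabla\psi^u_{n,\beta},\nabla\psi^u_{n,\alpha})$ and $\gamma(\nabla\psi^w_{n,\beta},\nabla\psi^u_{n,\alpha})$; I similarly take $\vartheta^h=\psi^T_{n,\alpha}$ in \eqref{error equation 5}. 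To dispose of the coupling term I form the $\alpha$-combination of \eqref{fully discrete formulations 2} and of the matching Stokes-projection identity (legitimate because $w=-\Delta u$ holds pointwise in $t$), which gives $(\psi^w_{n,\alpha},\varphi^h)-(\phi^h_{n,\alpha}-\mathcal{S}_h\phi_{n,\alpha},\nabla\cdot\varphi^h)=(\nabla\psi^u_{n,\alpha},\nabla\varphi^h)$; taking $\varphi^h=\psi^w_{n,\beta}$ and using $\psi^w_{n,\beta}\in V_h$ turns $\gamma(\nabla\psi^w_{n,\beta},\nabla\psi^u_{n,\alpha})$ into the $L^2$-type pairing $\gamma(\psi^w_{n,\alpha},\psi^w_{n,\beta})$.

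Adding the three identities and invoking the $G$-stability identity of \Cref{G-stable-lemma} three times --- on $\mu(\nabla\psi^u_{n,\beta},\nabla\psi^u_{n,\alpha})$, on $\gamma(\psi^w_{n,\alpha},\psi^w_{n,\beta})$, and on $\kappa(\nabla\psi^T_{n,\beta},\nabla\psi^T_{n,\alpha})$ --- produces, after summation over $n$, a telescoping left-hand side whose surviving top term dominates $\tfrac{1+\theta}{4}\big(\mu\|\nabla\psi^u_m\|^2+\gamma\|\psi^w_m\|^2+\kappa\|\nabla\psi^T_m\|^2\big)$, plus the nonnegative $a^{(n)}_\ell$-squares, plus $\sum_n\widehat{k}_n\|\widehat{k}_n^{-1}\psi^u_{n,\alpha}\|^2+\sum_n\widehat{k}_n\|\widehat{k}_n^{-1}\psi^T_{n,\alpha}\|^2$. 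Via the triangle inequality together with \eqref{Ritz projection} and \eqref{Stokes-type projection} (whose gradient part supplies $h\|\nabla\eta^u\|,h\|\nabla\eta^w\|$ at the correct order, while $\mathcal{S}_h,\mathcal{R}_h$ commute with $\partial_t$ so that $\widehat{k}_n^{-1}\eta^u_{n,\alpha}$ behaves like $\partial_t\eta^u$) this is exactly the combination $\max_n(\|e^u_n\|_1^2+\|e^T_n\|_1^2)+\max_n\|e^w_n\|^2+\sum_n\widehat{k}_n\|\widehat{k}_n^{-1}e^u_{n,\alpha}\|^2$ appearing in \eqref{proof of max nabla u-u^h T-T^h error}.

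The bulk of the work is the right-hand side, and here the $L^2$ estimate already proved in \Cref{Intermediate conclusion} is the engine: it gives $\max_n(\|e^u_n\|^2+\|e^T_n\|^2)=\mathcal{O}(h^{2r+2}+k_{\max}^4)$ and, crucially, $\sum_n\widehat{k}_n\big(\|\nabla e^u_{n,\beta}\|^2+\|\nabla e^T_{n,\beta}\|^2+\|e^w_{n,\beta}\|^2\big)=\mathcal{O}(h^{2r+2}+k_{\max}^4)$, while \Cref{Stability of DLN method} bounds $\sum_n\widehat{k}_n\|\nabla u^h_{n,\beta}\|^2$. The projection, consistency, and truncation contributions are bounded just as in \Cref{derivation term} through \Cref{truncation error estimate}, using \eqref{Ritz projection}, \eqref{Stokes-type projection}, the consistency \Cref{n beta int tn-1 tn+1}, and those trilinear bounds among \eqref{b 0 2 1}--\eqref{b 1 2 0} that place the test function in $L^2$ without a derivative (so no inverse inequality is needed there); these land either in the $\mathcal{O}(h^{2r+2}+k_{\max}^4)$ bucket or in $\tfrac{\epsilon}{\widehat{k}_n}\|\psi^u_{n,\alpha}\|^2$, to be absorbed, and the Landau term is handled as in \Cref{cubic term} via \eqref{vector_norm_inequality_2} and H\"older's inequality. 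The genuinely new feature relative to the $L^2$ proof is the nonlinear term $\nu b(u^h_{n,\beta},\psi^u_{n,\beta},\psi^u_{n,\alpha})$ (and its temperature analogue $b^*(u^h_{n,\beta},\psi^T_{n,\beta},\psi^T_{n,\alpha})$), which vanished by skew-symmetry when the last slot was $\psi^u_{n,\beta}$ but no longer does; here I split $u^h_{n,\beta}=u_{n,\beta}+e^u_{n,\beta}$, estimating the exact-solution part with \eqref{b 2 1 0}/\eqref{b 1 2 0} and the error part with the inverse inequality \eqref{inverse-ineq}.

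The main obstacle is controlling the fully error-quadratic and error-cubic pieces --- the $e^u_{n,\beta}$-part of $\nu b(u^h_{n,\beta},\psi^u_{n,\beta},\psi^u_{n,\alpha})$, the term $\nu b(\psi^u_{n,\beta},\psi^u_{n,\beta},\psi^u_{n,\alpha})$ arising from $b(\eta^u-\psi^u,u,\cdot)$-type splittings, the corresponding $b^*$ terms, and the error-squared part of the Landau term --- because the test slot $\psi^u_{n,\alpha}$ carries no viscous dissipation (only $\widehat{k}_n^{-1}\|\psi^u_{n,\alpha}\|^2$ is available). Moving the gradient off $\psi^u_{n,\alpha}$ by the inverse inequality \eqref{inverse-ineq} costs a factor $h^{-1}$, and the residual coefficients take exactly the form $\widehat{k}_n h^{-1}\nu^2\|\nabla e^u_{n,\beta}\|^2$, $\widehat{k}_n h^{-1}\|\nabla e^T_{n,\beta}\|^2$, and $\widehat{k}_n\lambda^2\big(\|u_{n,\beta}\|_1^4+\|\nabla e^u_{n,\beta}\|_1^4\big)$ that appear in \eqref{time-condition-H1}; that hypothesis is precisely the smallness making them absorbable into $\widehat{k}_n^{-1}\|\psi^u_{n,\alpha}\|^2$ (and the analogous $\psi^T$ term), while \eqref{time-diameter-condition} keeps the leftover $h^{-2}\sum_n\|\nabla\psi^u_{n,\beta}\|^2$-type factors at order $\mathcal{O}(h^{2r+2}+k_{\max}^4)$ by virtue of the summed $L^2$ bound. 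I would run the argument, possibly with a short induction on $m$ with hypothesis $\|\nabla\psi^u_n\|^2+\|\nabla\psi^T_n\|^2+\|\psi^w_n\|^2=\mathcal{O}(h^{2r+2}+k_{\max}^4)$ for $n\le m$, under which all absorptions go through for $h$ and $k_{\max}$ small; a final application of the discrete Gr\"onwall inequality \cite[p.369]{MR1043610}, together with the triangle inequality and the projection estimates \eqref{Ritz projection}, \eqref{Stokes-type projection}, then closes the step and yields \eqref{proof of max nabla u-u^h T-T^h error}.
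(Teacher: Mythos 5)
Your proposal is correct and follows essentially the same route as the paper: testing with the discrete time increment $\psi^u_{n,\alpha}$ (the paper scales by $\widehat{k}_n^{-1}$), using the $\alpha$-combination of the auxiliary equation with test function $\psi^w_{n,\beta}$ to convert $\gamma(\nabla\psi^w_{n,\beta},\nabla\psi^u_{n,\alpha})$ into $\gamma(\psi^w_{n,\alpha},\psi^w_{n,\beta})$, applying $G$-stability to the three resulting pairings, handling the error-quadratic trilinear and Landau terms via the inverse inequality so that the residual coefficients are exactly those absorbed by \eqref{time-condition-H1}, and closing with the $L^2$ bounds of \Cref{Intermediate conclusion} and the discrete Gr\"onwall inequality. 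The only cosmetic difference is your suggested induction on $m$, which the paper avoids since \eqref{time-condition-H1} is phrased in terms of $e^u_{n,\beta}$, already controlled by \Cref{Intermediate conclusion}.
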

    \begin{remark}
        With the help of error estimates in \eqref{proof of max u-u^h T-T^h error} and conditions in \eqref{time-diameter-condition}, the restriction in \eqref{time-condition-H1} is available since 
        \begin{align*}
        &\frac{\widehat{k}_n}{h} (\| \nabla e_{n,\beta}^u \|^{2}+ \| \nabla e_{n,\beta}^T \|^{2})
        = \mathcal{O} (h^{2r+1} + k_{\max}), \\
        &\widehat{k}_{n} \| \nabla e_{n,\beta}^u \|^{4} 
        \leq \frac{\big(\widehat{k}_{n} \| \nabla e_{n,\beta}^u \|^{2} \big)^{2}}{k_{\min}}
        = \mathcal{O} (k_{\max}).
        \end{align*}
    \end{remark}
    \begin{proof}
        By similar argument to \eqref{error equation 1} - \eqref{error equation 4}, we derive 
        \begin{align}
        \begin{split}\label{error equation 1 n alpha}
        &\frac{1}{\widehat{k}_n}(\psi^u_{n,\alpha}, v^h) + \mu (\nabla \psi^u_{n,\beta}, \nabla v^h) 
        +\gamma (\nabla \psi^w_{n,\beta},\nabla v^h) + \rho (e^u_{n,\beta},v^h) 
        \\
        =& \frac{1}{\widehat{k}_n}(\eta^u_{n,\alpha}, v^h) 
        +\nu b(u_{n,\beta}, u_{n,\beta}, v^h)
        -\nu b(u_{n,\beta}^h, u_{n,\beta}^h, v^h) 
        +  (\psi^p_{n,\beta}, \nabla \cdot v^h)
        \\
        &+ \lambda (|u_{n,\beta}|^2 u_{n,\beta}  - |u_{n,\beta}^h|^2 u_{n,\beta}^h ,v^h)
        +\! \sigma \xi (e^T_{n,\beta},v^h) - \mathcal{T}_{n,1}(u,w,p,T, v^h)
        \end{split}
        \\
        &\gamma (\psi^w_{n,\alpha}, \varphi^h) 
        - \gamma \big( \phi_{n,\alpha}^h - \mathcal{S}_h (\phi_{n,\alpha}),\nabla \cdot \varphi^h \big)
        - \gamma (\nabla \psi^u_{n,\alpha}, \nabla \varphi^h)
        =0, \label{error equation 2 n alpha}
        \\
        &\frac{1}{\widehat{k}_n} ( \nabla \cdot \psi^u_{n,\alpha} , q^h)=0.
        \label{error equation 3 n alpha}
        \\
        &\frac{\gamma}{\widehat{k}_n}  (\nabla \cdot \psi^w_{n,\beta} , \zeta^h)=0,
        \label{error equation 4 n alpha}
        \\
        \begin{split} \label{error equation 5 n alpha}
            &\frac{1}{\widehat{k}_n} (\psi^T_{n,\alpha}, \vartheta^h) 
            + \kappa (\nabla \psi^T_{n,\beta}, \nabla \vartheta^h) \\
            =& \Big(\frac{\eta^T_{n,\alpha}}{\widehat{k}_n}, \vartheta^h \Big)
            + b^*(u_{n,\beta}, T_{n,\beta}, \vartheta^h)
            - b^*(u_{n,\beta}^h, T_{n,\beta}^h, \vartheta^h)
            - \mathcal{T}_{n,2}(T,\vartheta^h).
        \end{split}
        \end{align}
        We set \( v^h = \widehat{k}_n^{-1} \psi^u_{n,\alpha} \) in \eqref{error equation 1 n alpha}, \( \varphi^h = \widehat{k}_n^{-1} \psi^w_{n,\beta} \) in \eqref{error equation 2 n alpha}, \( q^h = \psi^p_{n,\beta} \) in \eqref{error equation 3 n alpha}, \( \zeta^h = \phi_{n,\alpha}^h - \mathcal{S}_h (\phi_{n,\alpha}) \) in \eqref{error equation 4 n alpha} and \( \vartheta^h = \widehat{k}_n^{-1} \psi^T_{n,\alpha} \) in \eqref{error equation 5 n alpha}. We then combine the resulting equations and sum the corresponding inequalities from \( n = 1 \) to \( m-1 \), $1 \leq m \leq M$,. By applying the definition of the \( G \)-norm in \eqref{G-norm}, we deduce the following estimate:
        \begin{align}\label{error estimate nabla L2 inequality}
            &\frac{\mu}{4} (1+\theta) \|\nabla \psi^u_m\|^2 
            + \frac{\mu}{4} (1-\theta)  \|\nabla \psi^u_{m-1}\|^2 
            + \sum_{n=1}^{m-1} \widehat{k}_n \|\widehat{k}_n^{-1} \psi^u_{n,\alpha}\|^2
            \\
            &+\frac{\kappa}{4} (1+\theta) \|\nabla \psi^T_m\|^2 
            + \frac{\kappa}{4} (1-\theta)  \|\nabla \psi^T_{m-1}\|^2 
            + \sum_{n=1}^{m-1} \widehat{k}_n \|\widehat{k}_n^{-1} \psi^T_{n,\alpha}\|^2
            \notag \\
            &+\frac{\gamma}{4} (1+\theta) \| \psi^w_M\|^2 + \frac{\gamma}{4} (1-\theta)  \| \psi^w_{m-1}\|^2 
            \notag \\
            &\leq			
            \sum_{n=1}^{m-1}\widehat{k}_n (\widehat{k}_n^{-1} \eta^u_{n,\alpha},
            \widehat{k}_n^{-1} \psi^u_{n,\alpha})
            + \sum_{n=1}^{m-1}\widehat{k}_n (\widehat{k}_n^{-1} \eta^T_{n,\alpha},
            \widehat{k}_n^{-1} \psi^T_{n,\alpha})  \notag \\
            &- \sum_{n=1}^{m-1}	\widehat{k}_n \rho (e^u_{n,\beta}, \widehat{k}_n^{-1} \psi^u_{n,\alpha})
            + \sum_{n=1}^{m-1}	\widehat{k}_n \sigma \xi (e^T_{n,\beta}, \widehat{k}_n^{-1} \psi^u_{n,\alpha})
            \notag \\
            &+\sum_{n=1}^{m-1} \widehat{k}_n  \nu
            \big(b(u_{n,\beta}, u_{n,\beta},\widehat{k}_n^{-1} \psi^u_{n,\alpha})
            -b(u_{n,\beta}^h, u_{n,\beta}^h, \widehat{k}_n^{-1} \psi^u_{n,\alpha}) 
            \big) 
            \notag \\
            &+\sum_{n=1}^{m-1} \widehat{k}_n  
            \big(b^*(u_{n,\beta}, T_{n,\beta},\widehat{k}_n^{-1} \psi^T_{n,\alpha})
            -b^*(u_{n,\beta}^h, T_{n,\beta}^h, \widehat{k}_n^{-1} \psi^T_{n,\alpha}) \big) 
            \notag \\
            &+ \sum_{n=1}^{m-1}	\widehat{k}_n \lambda (|u_{n,\beta}|^2 u_{n,\beta} - |u^h_{n,\beta}|^2u^h_{n,\beta}, \widehat{k}_n^{-1} \psi^u_{n,\alpha})
            \notag \\
            &- \sum_{n=1}^{m-1}\widehat{k}_n \tau_{n}(u,w,p, \widehat{k}_n^{-1} \psi^u_{n,\alpha},\widehat{k}_n^{-1} \psi^T_{n,\alpha}) \!+\! \frac{\mu}{4}(1+\theta)\|\nabla \psi^u_1\|^2 + \frac{\mu}{4}(1-\theta)\|\nabla \psi^u_{0}\|^2
            \notag
            \\
            &+ \frac{\kappa}{4}(1+\theta)\|\nabla \psi^T_1\|^2 + \frac{\kappa}{4}(1-\theta)\|\nabla \psi^T_{0}\|^2 
            + \frac{\gamma}{4}(1+\theta)\|\psi^w_1\|^2 + \frac{\gamma}{4}(1-\theta)\|\psi^w_{0}\|^2. \notag 
        \end{align} 
        By similar argument to \eqref{derivation-term-eq1}-\eqref{derivation-term-eq3}
        \begin{align} \label{error-H1-term1}
        \begin{split}
            &(\widehat{k}_n^{-1} \eta^u_{n,\alpha}, \widehat{k}_n^{-1} \psi^u_{n,\alpha})   \\
            \leq&
            C(\theta) \frac{h^{2r+4}}{\widehat{k}_n} \int_{t_{n-1}}^{t_{n+1}} \big( \| u_t \|_{r+2}^2 + \| w_t \|_{r+2}^2 + \| p_t \|_{r+1}^2\big) \, \mathrm{d}t
            + \frac{1}{16} \| \widehat{k}_n^{-1} \psi^u_{n,\alpha}\|^2, \\
            &(\widehat{k}_n^{-1} \eta^T_{n,\alpha}, \widehat{k}_n^{-1} \psi^T_{n,\alpha})   
            \leq C(\theta) \frac{h^{2r+4}}{\widehat{k}_n} \int_{t_{n-1}}^{t_{n+1}} \| T_t \|_{r+2}^2  \mathrm{d}t
            + \frac{1}{16} \| \widehat{k}_n^{-1} \psi^T_{n,\alpha}\|^2. 
        \end{split}
        \end{align}
        By Cauchy-Schwarz inequality and Poincar\'e inequality 
        \begin{align} \label{u_error-H1-term2}
        \begin{split}
            &(e^u_{n,\beta}, \widehat{k}_n^{-1} \psi^u_{n,\alpha})
            \leq  C \| \nabla e_{n,\beta}^u \|^{2} + 	\frac{1}{32} \| \widehat{k}_n^{-1} \psi^u_{n,\alpha}\|^2,
            \\
            &(e^T_{n,\beta}, \widehat{k}_n^{-1} \psi^T_{n,\alpha})
            \leq
            C \| \nabla e_{n,\beta}^T \|^{2}  + 	\frac{1}{16} \| \widehat{k}_n^{-1} \psi^T_{n,\alpha}\|^2,  \\
            &\sigma \xi (e^T_{n,\beta}, \widehat{k}_n^{-1} \psi^u_{n,\alpha})
            \leq C \sigma^2 |\xi|^2 \| \nabla e_{n,\beta}^T \|^{2}  + \frac{1}{32} \| \widehat{k}_n^{-1} \psi^T_{n,\alpha}\|^2. 
        \end{split}
        \end{align}
        By algebraic calculation, we have
        \begin{align}
            &\nu b(u_{n,\beta}, u_{n,\beta},\widehat{k}_n^{-1} \psi^u_{n,\alpha})
            -\nu b(u_{n,\beta}^h, u_{n,\beta}^h, \widehat{k}_n^{-1} \psi^u_{n,\alpha}) 
            \label{b-terms-H1} \\
            =& - \nu b(e_{n,\beta}^u, u_{n,\beta}, \widehat{k}_n^{-1} \psi^u_{n,\alpha})
            - \nu b(u_{n,\beta}^h, e_{n,\beta}^u, \widehat{k}_n^{-1} \psi^u_{n,\alpha})
            \notag \\
            =& - \nu b(e_{n,\beta}^u, u_{n,\beta}, \widehat{k}_n^{-1} \psi^u_{n,\alpha})
            - \nu b(\psi_{n,\beta}^u, e_{n,\beta}^u, \widehat{k}_n^{-1} \psi^u_{n,\alpha})
            + \nu b(\eta_{n,\beta}^u, e_{n,\beta}^u, \widehat{k}_n^{-1} \psi^u_{n,\alpha}) \notag \\
            &- \nu b(u_{n,\beta}, e_{n,\beta}^u, \widehat{k}_n^{-1} \psi^u_{n,\alpha}). \notag 
        \end{align}  
        By \eqref{b 2 1 0}, \eqref{b 1 2 0}, Poincar\'e inequality and Young's inequality 
        \begin{align}
            &- \nu b(e_{n,\beta}^u, u_{n,\beta}, \widehat{k}_n^{-1} \psi^u_{n,\alpha})
            - \nu b(u_{n,\beta}, e_{n,\beta}^u, \widehat{k}_n^{-1} \psi^u_{n,\alpha}) 
            \label{b-terms-H1-eq1} \\
            &\leq C \nu^{2} \| \nabla e_{n,\beta}^u \|^{2} \| u_{n,\beta} \|_{2}^{2}
            + \frac{1}{32} \| \widehat{k}_n^{-1} \psi^u_{n,\alpha}\|^2. \notag 
        \end{align}
        By \eqref{b 1 1 0 1}, inverse inequality in \eqref{inverse-ineq}, Poincar\'e inequality and Young's inequality 
        \begin{align}
            &- \nu b(\psi_{n,\beta}^u, e_{n,\beta}^u, \widehat{k}_n^{-1} \psi^u_{n,\alpha})
            + \nu b(\eta_{n,\beta}^u, e_{n,\beta}^u, \widehat{k}_n^{-1} \psi^u_{n,\alpha})  
            \label{b-terms-H1-eq2} \\
            \leq& C \nu h^{-1/2}\big( \| \nabla \psi_{n,\beta}^u \| + \| \eta_{n,\beta}^u \|_{1} )
            \| \nabla e_{n,\beta}^u \| 
            \| \widehat{k}_n^{-1} \psi^u_{n,\alpha} \| \notag \\
            \leq& C \nu^{2} h^{-1}\big( \| \nabla \psi_{n,\beta}^u \|^{2} + \| \eta_{n,\beta}^u \|_{1}^{2} ) \| \nabla e_{n,\beta}^u \|^{2} 
            + \frac{1}{32} \| \widehat{k}_n^{-1} \psi^u_{n,\alpha}\|^2. \notag 
        \end{align}
        We combine \eqref{b-terms-H1} - \eqref{b-terms-H1-eq2} and use approximation 
        in \eqref{Stokes-type projection} to obtain
        \begin{align}
            &\nu b(u_{n,\beta}, u_{n,\beta},\widehat{k}_n^{-1} \psi^u_{n,\alpha})
            -\nu b(u_{n,\beta}^h, u_{n,\beta}^h, \widehat{k}_n^{-1} \psi^u_{n,\alpha}) 
            \label{b-terms-H1-conclusion} \\
            \leq& C \nu^{2} \| \nabla e_{n,\beta}^u \|^{2} \| u_{n,\beta} \|_{2}^{2}
            + \frac{C \nu^2}{h} \big( \| \nabla \psi_{n,\beta}^u \|^{2} + \| \eta_{n,\beta}^u \|_{1}^{2} ) \| \nabla e_{n,\beta}^u \|^{2} + \frac{1}{16} \| \widehat{k}_n^{-1} \psi^u_{n,\alpha}\|^2 \notag  \\
            \leq& C \nu^2 h^{-1} \| \nabla e_{n,\beta}^u \|^{2} \| \nabla \psi_{n,\beta}^u \|^{2} 
            + \frac{1}{16} \| \widehat{k}_n^{-1} \psi^u_{n,\alpha}\|^2 \notag  \\
            &+ C \nu^2 \Big[ \| u \|_{\infty,2}^{2} + h^{2r+1} \big( \| u \|_{\infty,r+2}^{2} 
            + \| w \|_{\infty,r+2}^{2} + \| p \|_{\infty,r+1}^{2} \big) \Big] \| \nabla e_{n,\beta}^u \|^{2}. \notag 
        \end{align}
        An analogous estimate for the term involving \( b^*(\cdot,\cdot,\cdot) \) follows in the same manner:
        \begin{align*}
        \begin{split}
            &b^*(u_{n,\beta}, T_{n,\beta},\widehat{k}_n^{-1} \psi^T_{n,\alpha})
            -b^*(u_{n,\beta}^h, T_{n,\beta}^h, \widehat{k}_n^{-1} \psi^T_{n,\alpha}) 
            \\
            =& - b^*(e_{n,\beta}^u, T_{n,\beta},\widehat{k}_n^{-1} \psi^T_{n,\alpha}) 
            - b^*(\psi_{n,\beta}^u - \eta_{n,\beta}^{u}, e_{n,\beta}^T,\widehat{k}_n^{-1} \psi^T_{n,\alpha}) 
            -  b^*( u_{n,\beta}, e_{n,\beta}^T,\widehat{k}_n^{-1} \psi^T_{n,\alpha}) 
            \\
            \leq& C  \| \nabla e_{n,\beta}^u \|^{2} \| T_{n,\beta} \|_{2}^{2}
            + C \| \nabla e_{n,\beta}^T \|^{2} \| u_{n,\beta} \|_{2}^{2}
            + \frac{C }{h} \big( \| \nabla \psi_{n,\beta}^u \|^{2} + \| \eta_{n,\beta}^u \|_{1}^{2} ) \| \nabla e_{n,\beta}^T \|^{2} \\
            &+ \frac{1}{16} \| \widehat{k}_n^{-1} \psi^T_{n,\alpha}\|^2 \notag  \\
            \leq& \frac{C}{h} \| \nabla e_{n,\beta}^T \|^{2} \| \nabla \psi_{n,\beta}^u \|^{2} 
            + \frac{1}{16} \| \widehat{k}_n^{-1} \psi^T_{n,\alpha}\|^2 \notag  + C(\theta) \big( \| u \|_{\infty,2}^{2} \| \nabla e_{n,\beta}^T \|^{2} \!+\! \| T \|_{\infty,2}^{2} \| \nabla e_{n,\beta}^u \|^{2} \big)
            \\
            &+ C \Big[ h^{2r+1} \big( \| u \|_{\infty,r+2}^{2} 
            + \| w \|_{\infty,r+2}^{2} + \| p \|_{\infty,r+1}^{2} \big) \Big] \| \nabla e_{n,\beta}^T \|^{2}
        \end{split}
        \end{align*}
        By Cauchy-Schwarz inequality, continuity property in \eqref{vector_norm_inequality_2},
        Sobolev embedding inequality, Poincar\'e inequality and approximation 
        in \eqref{Stokes-type projection}
        \begin{align} 
        &\lambda |(|u_{n,\beta}|^2 u_{n,\beta} - | u_{n,\beta}^h|^2 u_{n,\beta}^h, \widehat{k}_n^{-1} \psi^u_{n,\alpha})| \notag \\
        \leq
        & 
        \lambda \Big(
        \int_{D} (|u_{n,\beta}|^2 u_{n,\beta} -|u_{n,\beta}^h|^2 u_{n,\beta}^h)^{2}\mathrm{d}x
        \Big)^\frac{1}{2} \| \widehat{k}_n^{-1} \psi^u_{n,\alpha} \|  \notag
        \\
        \leq& 
        C \lambda \Big( \int_{D} (|u_{n,\beta}| + |u_{n,\beta}^h| )^4 |u_{n,\beta} - u_{n,\beta}^h|^2 \mathrm{d}x
        \Big)^\frac{1}{2} \| \widehat{k}_n^{-1} \psi^u_{n,\alpha} \|  \notag \\
        \leq
        &
        C \lambda \Big(
        \int_{D} (|u_{n,\beta}| + |u_{n,\beta}^h| )^8 \mathrm{d}x
        \Big)^\frac{1}{4}
        \Big(
        \int_{D} |u_{n,\beta} - u_{n,\beta}^h|^4 \mathrm{d}x
        \Big)^\frac{1}{4} 
        \| \widehat{k}_n^{-1} \psi^u_{n,\alpha} \|
        \label{cubic-term-H1}    \\
        \leq
        & C \lambda 
        \Big( \| u_{n,\beta} \|_{L^8(D)}^2 + \| e_{n,\beta}^u \|_{L^8(D)}^2 \Big) 
        \|\nabla e_{n,\beta}^u \| \| \widehat{k}_n^{-1} \psi^u_{n,\alpha} \| \notag
        \\ 
        \leq
        & C \lambda (\|u_{n,\beta}\|_{1}^2 + \| \nabla e_{n,\beta}^u \|^2 )
        \big( \|\nabla \psi_{n,\beta}^{u} \| + \|\nabla \eta_{n,\beta}^{u} \| \big)
        \| \widehat{k}_n^{-1} \psi^u_{n,\alpha} \| \notag \\
        \leq& C \lambda^2 (\|u_{n,\beta}\|_{1}^4 + \| \nabla e_{n,\beta}^u \|^4 )
        \big( \|\nabla \psi_{n,\beta}^{u} \|^{2} + \|\nabla \eta_{n,\beta}^{u} \|^{2} \big)
        + \frac{1}{16} \| \widehat{k}_n^{-1} \psi^u_{n,\alpha}\|^2 \notag \\
        \leq& C \lambda^2 (\|u_{n,\beta}\|_{1}^4 + \| \nabla e_{n,\beta}^u \|^4 ) 
        \|\nabla \psi_{n,\beta}^{u} \|^{2} + \frac{1}{16} \| \widehat{k}_n^{-1} \psi^u_{n,\alpha}\|^2 \notag \\
        &+ C \lambda^2 h^{2r+2} (\|u_{n,\beta}\|_{1}^4 + \| \nabla e_{n,\beta}^u \|^4 ) 
        \big( \| u \|_{\infty,r+2}^{2} + \| w \|_{\infty,r+2}^{2} + \| p \|_{\infty,r+1}^{2} \big).
        \notag 
        \end{align}
        Now we address terms in $\tau_{n}(u,w,p, \widehat{k}_n^{-1} \psi^u_{n,\alpha},\widehat{k}_n^{-1} \psi^T_{n,\alpha})$. 
        By similar argument to \eqref{cubic-term-H1}
        \begin{align} \label{tau-H1-eq1} 
        \begin{split}
            &\lambda \big| \big( |u_{n,\beta}|^2 u_{n,\beta} - | u(t_{n,\beta}) |^2 u(t_{n,\beta}), \widehat{k}_n^{-1} \psi^u_{n,\alpha} \big) \big|  \\
            \leq& C \lambda^2 (\|u_{n,\beta}\|_{1}^4 + \| u(t_{n,\beta}) \|_{1}^4 )
            \| u_{n,\beta} - u(t_{n,\beta}) \|_{1}^{2}
            + \frac{1}{32} \| \widehat{k}_n^{-1} \psi^u_{n,\alpha}\|^2 \\
            \leq& C(\theta) \lambda^2 \| u \|_{\infty,1}^{4} (k_{n} + k_{n-1})^{3} 
            \int_{t_{n-1}}^{t_{n+1}} \| u_{tt} \|_{1}^{2} dt 
            + \frac{1}{32} \| \widehat{k}_n^{-1} \psi^u_{n,\alpha}\|^2.
        \end{split}
        \end{align}
        By \eqref{b 2 1 0}, \eqref{b 1 2 0}, Young's inequality and Lemma \ref{n beta int tn-1 tn+1}
        \begin{align} \label{tau-H1-eq2}
        \begin{split}
            &\nu \big(b(u_{n,\beta}, u_{n,\beta}, \widehat{k}_n^{-1} \psi^u_{n,\alpha})
            - b(u(t_{n,\beta}), u(t_{n,\beta}), \widehat{k}_n^{-1} \psi^u_{n,\alpha}) \big)
            \\
            \leq&
            C\nu (\| u_{n,\beta} \|_{2} + \| u(t_{n,\beta}) \|_{2} )  \| u_{n,\beta} - u(t_{n,\beta})\|_{1} \| \widehat{k}_n^{-1} \psi^u_{n,\alpha} \| 		
            \\ 
            \leq&
            C(\theta) \nu^{2} \| u \|_{\infty,2}^{2} (k_{n} + k_{n-1})^{3}  
            \int_{t_{n-1}}^{t_{n+1}} \| u_{tt} \|_{1}^{2} dt
            + \frac{1}{64} \| \widehat{k}_n^{-1} \psi^u_{n,\alpha}\|^2. 
            \end{split}
            \\
            \begin{split}
            & b^{\ast}(u_{n,\beta}, T_{n,\beta}, \widehat{k}_n^{-1} \psi^u_{n,\alpha})
            - b^{\ast}(u(t_{n,\beta}), T(t_{n,\beta}), \widehat{k}_n^{-1} \psi^u_{n,\alpha}) 
            \\
            \leq&
            C (\| T_{n,\beta} \|_{2}\| u_{n,\beta} - u(t_{n,\beta})\|_{1} + \| u(t_{n,\beta}) \|_{2} \| T_{n,\beta} - T(t_{n,\beta})\|_{1})  \| \widehat{k}_n^{-1} \psi^u_{n,\alpha} \| 		
            \\ 
            \leq&
            C(\theta)  \| T \|_{\infty,2}^{2} (k_{n} + k_{n-1})^{3}  
            \int_{t_{n-1}}^{t_{n+1}} \| u_{tt} \|_{1}^{2} dt
            + \frac{1}{64} \| \widehat{k}_n^{-1} \psi^u_{n,\alpha}\|^2
            \\
            &+C(\theta)  \| u \|_{\infty,2}^{2} (k_{n} + k_{n-1})^{3}  
            \int_{t_{n-1}}^{t_{n+1}} \| T_{tt} \|_{1}^{2} dt. 
        \end{split}
        \end{align}
        We utilize Cauchy-Schwarz, inequality, Gauss divergence theorem, Poincar\'e inequality, Young's inequalities, along with \Cref{n beta int tn-1 tn+1}
        \begin{align}
            &\Big( \frac{u_{n,\alpha}}{\widehat{k}_n} - u_t(t_{n,\beta}), \widehat{k}_n^{-1} \psi^u_{n,\alpha} \Big) 
            \leq
            C(\theta) k_{\max}^3 \int_{t_{n-1}}^{t_{n+1}} \|u_{ttt}\|^2  \mathrm{d}t
            \!+\! \frac{1}{32} \| \widehat{k}_n^{-1} \psi^u_{n,\alpha} \|^2,
            \label{tau-H1-eq3}  \\
            &\Big( \frac{T_{n,\alpha}}{\widehat{k}_n} - T_t(t_{n,\beta}), \widehat{k}_n^{-1} \psi^T_{n,\alpha} \Big) 
            \leq
            C(\theta) k_{\max}^3 \int_{t_{n-1}}^{t_{n+1}} \|T_{ttt}\|^2  \mathrm{d}t
            + \frac{1}{32} \| \widehat{k}_n^{-1} \psi^T_{n,\alpha} \|^2,
            \notag \\
            &\mu (\nabla (u_{n,\beta} -u(t_{n,\beta})), \nabla \widehat{k}_n^{-1} \psi^u_{n,\alpha} ) 
            \leq 
            C(\theta) \mu^{2} k_{\max}^3 \int_{t_{n-1}}^{t_{n+1}}\|\Delta u_{tt}\|^2  \mathrm{d}t
            +\frac{1}{32}\| \widehat{k}_n^{-1} \psi^u_{n,\alpha} \|^2,
            \notag \\
            & \kappa (\nabla (T_{n,\beta} -T(t_{n,\beta})), \nabla \widehat{k}_n^{-1} \psi^T_{n,\alpha} ) 
            \leq 
            C(\theta) \kappa^2 k_{\max}^3 \int_{t_{n-1}}^{t_{n+1}}\|\Delta T_{tt}\|^2  \mathrm{d}t
            +\frac{1}{32}\| \widehat{k}_n^{-1} \psi^T_{n,\alpha} \|^2,
            \notag \\
            &\rho (u_{n,\beta}-u(t_{n,\beta}), \widehat{k}_n^{-1} \psi^u_{n,\alpha})
            \leq 
            C(\theta) \rho^2 k_{\max}^3 \int_{t_{n-1}}^{t_{n+1}}\|u_{tt}\|^2  \mathrm{d}t
            + \frac{1}{32} \| \widehat{k}_n^{-1} \psi^u_{n,\alpha}\|^2,
            \notag \\
            &\gamma(\nabla (w_{n,\beta} - w(t_{n,\beta})), \nabla \widehat{k}_n^{-1} \psi^u_{n,\alpha})
            \leq  
            C(\theta) \gamma^2 k_{\max}^3 \int_{t_{n-1}}^{t_{n+1}}\|\Delta w_{tt}\|^2  \mathrm{d}t
            +\frac{1}{32}  \| \widehat{k}_n^{-1} \psi^u_{n,\alpha} \|^2,
            \notag \\
            &(p_{n,\beta} - p(t_{n,\beta}), \nabla \cdot \widehat{k}_n^{-1} \psi^u_{n,\alpha})
            \leq  
            C(\theta) k_{\max}^3 \int_{t_{n-1}}^{t_{n+1}}\|\nabla p_{tt}\|^2  \mathrm{d}t
            +\frac{1}{32} \|\widehat{k}_n^{-1} \psi^u_{n,\alpha} \|^2,
            \notag \\
            &\sigma \xi(T{n,\beta} - T(t_{n,\beta}),\widehat{k}_n^{-1} \psi^u_{n,\alpha})
            \leq   
            C(\theta) \sigma^2 |\xi |^2 k_{\max}^3 \int_{t_{n-1}}^{t_{n+1}}\|T_{tt}\|^2  \mathrm{d}t
            +\frac{1}{32} \|\widehat{k}_n^{-1} \psi^u_{n,\alpha}\|^2. \notag 
        \end{align}
        We combine \eqref{tau-H1-eq1} - \eqref{tau-H1-eq3} to have 
        \begin{align} 
            &\sum_{n=1}^{m-1} \widehat{k}_n  |\tau(u,w,p,\widehat{k}_n^{-1} \psi^u_{n,\alpha})|
            \label{tau-H1-conclusion} \\
            &\leq C(\theta) \big( \lambda^2 \| u \|_{\infty,1}^{4} + \nu^{2} \| u \|_{\infty,2}^{2} +\| T \|_{\infty,2}^{2} \big) k_{\max}^4 \| u_{tt} \|_{2,1}^{2}+ C(\theta)   \| u \|_{\infty,2}^{2}  k_{\max}^4 \| T_{tt} \|_{2,1}^{2}\notag
            \\
            &+ \frac{1}{4} \|\widehat{k}_n^{-1} \psi^u_{n,\alpha}\|^2 + \frac{1}{16} \|\widehat{k}_n^{-1} \psi^T_{n,\alpha}\|^2 
            + \!C(\theta) k_{\max}^4 \big( \|u_{ttt}\|_{2,0}^2 +\|T_{ttt}\|_{2,0}^2 \notag
            + \mu^{2} \|\Delta u_{tt}\|_{2,0}^2 \notag \\
            &+ \kappa^2 \|\Delta T_{tt}\|_{2,0}^2+ \rho^2 \|u_{tt}\|_{2,0}^2 + \gamma^2 \|\Delta w_{tt}\|_{2,0}^2
            + \|\nabla p_{tt}\|_{2,0}^2 +\sigma^2 |\xi|^2 \|T_{tt}\|_{2,0}^2 \big). \notag 
        \end{align}
        By \eqref{error-H1-term1}, \eqref{u_error-H1-term2}, \eqref{b-terms-H1-conclusion}, \eqref{cubic-term-H1} and \eqref{tau-H1-conclusion}, 
        \eqref{error estimate nabla L2 inequality} becomes 
        \begin{align}
            &\frac{\mu}{4} (1+\theta) \|\nabla \psi^u_m\|^2 +\frac{\kappa}{4} (1+\theta) \|\nabla \psi^T_m\|^2 
            +\frac{\gamma}{4} (1+\theta) \| \psi^w_m\|^2 \label{error estimate H1 conclusion}  \\
            &+ \sum_{n=1}^{m-1} \frac{\widehat{k}_n}{2} \|\widehat{k}_n^{-1} \psi^u_{n,\alpha}\|^2
            + \sum_{n=1}^{m-1} \frac{\widehat{k}_n}{2} \|\widehat{k}_n^{-1} \psi^T_{n,\alpha}\|^2
            \notag \\
            &\leq  C(\theta) \Big[ \frac{\nu^2 \| \nabla e_{m-1,\beta}^u \|^{2} + \| \nabla e_{m-1,\beta}^T \|^{2} }{h} 
            + \lambda^2 \big( \|u_{m-1,\beta}\|_{1}^4 + \| \nabla e_{m-1,\beta}^u \|_{1}^4 \big) \Big]
            \widehat{k}_{m-1} \| \nabla \psi_{m}^u \|^2  \notag \\
            &+ C(\theta) \Big[ \frac{\nu^2 \| \nabla e_{m-1,\beta}^u \|^{2} + \| \nabla e_{m-1,\beta}^T \|^{2} }{h} 
            \!+\! \lambda^2 \big( \|u_{m-1,\beta}\|_{1}^4 + \| \nabla e_{m-1,\beta}^u \|_{1}^4 \big) \Big] \times \notag \\ & \times \widehat{k}_{m-1}
            \big( \| \nabla \psi_{m-1}^u \|^2 \!+\! \| \nabla \psi_{m-2}^u \|^2 \big) \notag \\
            &+C(\theta) \sum_{n=1}^{m-2} \Big[ \frac{\nu^2 \| \nabla e_{n,\beta}^u \|^{2} +\| \nabla e_{n,\beta}^T \|^{2}  }{h} + \lambda^2 \big( \|u_{n,\beta}\|_{1}^4 + \| \nabla e_{n,\beta}^u \|_{1}^4 \big) \Big]
            \widehat{k}_{n} \| \nabla \psi_{n,\beta}^u \|^2		\notag \\
            &+C(\theta) h^{2r+4} \big( \| u_t \|_{2,r+2}^2 + \| w_t \|_{2,r+2}^2 
            + \| p_t \|_{2,r+1}^2 + \| T_t \|_{2,r+2}^2 \big)  \notag \\
            &+ C(\theta) (1 + \sigma^2 |\xi|^2 + \| u \|_{\infty,2}^{2} ) \sum_{n=1}^{m-1} \widehat{k}_{n} \| \nabla e_{n,\beta}^{T} \|^{2}  
            + C(\theta) \big( 1+ \| T \|_{\infty,2}^{2} \big) \sum_{n=1}^{m-1} \widehat{k}_{n} \|\nabla e_{n,\beta}^{u} \|^{2}
            \notag \\
            &+ C \nu^2 \Big[ \| u \|_{\infty,2}^{2} + h^{2r+1} \big( \| u \|_{\infty,r+2}^{2} 
            + \| w \|_{\infty,r+2}^{2} + \| p \|_{\infty,r+1}^{2} \big) \Big] 
            \sum_{n=1}^{m-1} \widehat{k}_{n} \|\nabla e_{n,\beta}^{u} \|^{2}
            \notag \\
            &+ C h^{2r+1} \big( \| u \|_{\infty,r+2}^{2}  + \| w \|_{\infty,r+2}^{2} + \| p \|_{\infty,r+1}^{2} \big) 
            \sum_{n=1}^{m-1} \widehat{k}_{n} \|\nabla e_{n,\beta}^{T} \|^{2}
            \notag \\
            &+ C \lambda^2 h^{2r+2} \big( \| u \|_{\infty,r+2}^{2} + \| w \|_{\infty,r+2}^{2} + \| p \|_{\infty,r+1}^{2} \big) 
            \sum_{n=1}^{m-1} \widehat{k}_{n} (\|u_{n,\beta}\|_{1}^4 + \| \nabla e^u_{n,\beta} \|^4 )  \notag  \\
            &+C(\theta) \big( \lambda^2 \| u \|_{\infty,1}^{4} + \nu^{2} \| u \|_{\infty,2}^{2} +\| T \|_{\infty,2}^{2} \big) k_{\max}^4 \| u_{tt} \|_{2,1}^{2}+ C(\theta)   \| u \|_{\infty,2}^{2}  k_{\max}^4 \| T_{tt} \|_{2,1}^{2}\notag
            \\
            &+ \!C(\theta) k_{\max}^4 \big( \|u_{ttt}\|_{2,0}^2 +\|T_{ttt}\|_{2,0}^2 \notag
            + \mu^{2} \|\Delta u_{tt}\|_{2,0}^2 + \kappa^2 \|\Delta T_{tt}\|_{2,0}^2+ \rho^2 \|u_{tt}\|_{2,0}^2 
            \notag \\
            &\qquad \qquad \qquad + \gamma^2 \|\Delta w_{tt}\|_{2,0}^2
            + \|\nabla p_{tt}\|_{2,0}^2 +\sigma^2 |\xi|^2 \|T_{tt}\|_{2,0}^2 \big). \notag 
        \end{align} 
        We make use of discrete Gr\"onwall inequality \cite[p.369]{MR1043610} along with \eqref{proof of max u-u^h T-T^h error} in Theorem \ref{Intermediate conclusion}
        and restrictions in \eqref{time_condition_eq2}, \eqref{time-condition-H1} to obtain
        \begin{align} \label{psi H1 conclusion}
            &\|\nabla \psi^u_m\|^2 +\|\nabla \psi^T_m\|^2 
            +\| \psi^w_m\|^2
            + \sum_{n=1}^{m-1} \frac{\widehat{k}_n}{2} \|\widehat{k}_n^{-1} \psi^u_{n,\alpha}\|^2
            + \sum_{n=1}^{m-1} \frac{\widehat{k}_n}{2} \|\widehat{k}_n^{-1} \psi^T_{n,\alpha}\|^2 \\
            &\leq \mathcal{O} (h^{2r+2}+ k_{\max}^{4}), \notag 
        \end{align}
        which implies \eqref{proof of max nabla u-u^h T-T^h error} by triangle inequality and approximation in \eqref{Stokes-type projection}.
    \end{proof}

    \subsection{Error estimate for pressure} \ 
    \begin{lemma} \label{Intermediate conclusion 3}
        Suppose Assumptions \ref{u_p_T_space}, \ref{time_condition_2} and 
        conditions in \eqref{time-diameter-condition}, \eqref{time-condition-H1} hold. 
        then \Cref{fully discrete formulations scheme} has the following estimate
        \begin{align}
        \label{lemma-p-conclusion}
        &\sum_{n=1}^{m-1}  \mu \widehat{k}_n \|e^w_{n,\beta}\|^2 
        + \frac{\gamma}{2} \sum_{n=1}^{m-1} \widehat{k}_n \|\nabla e^w_{n,\beta}\|^2 
        \leq \mathcal{O} (h^{2r+2} + k_{\max}^{4}).
        \end{align}
    \end{lemma}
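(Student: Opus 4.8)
The plan is to recover the missing gradient control on the auxiliary variable $w$ by testing the $\beta$-averaged momentum error equation with $v^h=\psi^w_{n,\beta}$, which is precisely the test function that turns the coupling term $\gamma(\nabla\psi^w_{n,\beta},\nabla v^h)$ into $\gamma\|\nabla\psi^w_{n,\beta}\|^2$. Starting from \eqref{error equation 1} with $v^h=\psi^w_{n,\beta}$, the pressure coupling $(\psi^p_{n,\beta},\nabla\cdot\psi^w_{n,\beta})$ vanishes because $\psi^w_{n,\beta}$ is discretely divergence-free against $Q_h$ by \eqref{error equation 4} and $\psi^p_{n,\beta}\in Q_h$. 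To identify the $\mu\|e^w_{n,\beta}\|^2$ contribution appearing in \eqref{lemma-p-conclusion}, I would use \eqref{error equation 2} with $\varphi^h=\psi^w_{n,\beta}$ and $\zeta^h=\phi^h_{n,\beta}-\mathcal{S}_h\phi_{n,\beta}\in Q_h$, which (again by \eqref{error equation 4}) yields the identity $(\nabla\psi^u_{n,\beta},\nabla\psi^w_{n,\beta})=\|\psi^w_{n,\beta}\|^2$; substituting this replaces the $\mu(\nabla\psi^u_{n,\beta},\nabla\psi^w_{n,\beta})$ term by $\mu\|\psi^w_{n,\beta}\|^2$. After multiplying by $\widehat{k}_n$ and summing over $n=1,\dots,m-1$, the left-hand side becomes $\sum_{n=1}^{m-1}\widehat{k}_n\big(\mu\|\psi^w_{n,\beta}\|^2+\gamma\|\nabla\psi^w_{n,\beta}\|^2\big)$, and it will remain to pass from $\psi^w$ to $e^w$ by the triangle inequality together with the projection bound \eqref{Stokes-type projection} for $\eta^w$.

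The next step is to estimate every right-hand side term so that it is either absorbed into $\mu\|\psi^w_{n,\beta}\|^2+\gamma\|\nabla\psi^w_{n,\beta}\|^2$ (via Young's inequality, using the Poincar\'e inequality to trade $\|\psi^w_{n,\beta}\|$ for $\|\nabla\psi^w_{n,\beta}\|$ when needed) or controlled by quantities already shown to be $\mathcal{O}(h^{2r+2}+k_{\max}^4)$. The linear terms $\widehat{k}_n(\widehat{k}_n^{-1}\psi^u_{n,\alpha},\psi^w_{n,\beta})$, $\widehat{k}_n(\widehat{k}_n^{-1}\eta^u_{n,\alpha},\psi^w_{n,\beta})$, $\widehat{k}_n\rho(e^u_{n,\beta},\psi^w_{n,\beta})$ and $\widehat{k}_n\sigma\xi(e^T_{n,\beta},\psi^w_{n,\beta})$ are handled with $\sum\widehat{k}_n\|\widehat{k}_n^{-1}\psi^u_{n,\alpha}\|^2$, $\sum\widehat{k}_n\|\nabla e^u_{n,\beta}\|^2$ and $\sum\widehat{k}_n\|\nabla e^T_{n,\beta}\|^2$ taken from \eqref{proof of max u-u^h T-T^h error} and \eqref{proof of max nabla u-u^h T-T^h error} plus \eqref{Stokes-type projection}. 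For the convective difference I would exploit the skew-symmetry of $b$, express $b(u_{n,\beta},u_{n,\beta},\psi^w_{n,\beta})-b(u^h_{n,\beta},u^h_{n,\beta},\psi^w_{n,\beta})$ through $e^u_{n,\beta}$, apply \eqref{b 1 1 1}, and use the $H^1$-boundedness of $u^h_{n,\beta}$ (a consequence of $\max_n\|e^u_n\|_1$ in \eqref{proof of max nabla u-u^h T-T^h error}) together with $\sum\widehat{k}_n\|\nabla e^u_{n,\beta}\|^2=\mathcal{O}(h^{2r+2}+k_{\max}^4)$; the cubic difference $\lambda(|u_{n,\beta}|^2u_{n,\beta}-|u^h_{n,\beta}|^2u^h_{n,\beta},\psi^w_{n,\beta})$ is treated as in \eqref{cubic term inequality 1} via \eqref{vector_norm_inequality_2}, H\"older's inequality and Sobolev embedding.

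The truncation contribution $\mathcal{T}_{n,1}(u,w,p,T,\psi^w_{n,\beta})$ is bounded term by term exactly as in Lemmas \ref{analytical b - exact b with t n beta} and \ref{truncation error estimate}, using the consistency estimates of Lemma \ref{n beta int tn-1 tn+1}, so that each piece, once multiplied by $\widehat{k}_n$ and summed, is $\mathcal{O}(k_{\max}^4)$ times a norm of the exact solution. The one place requiring attention is the pressure-consistency term $(p_{n,\beta}-p(t_{n,\beta}),\nabla\cdot\psi^w_{n,\beta})$: unlike $(\psi^p_{n,\beta},\nabla\cdot\psi^w_{n,\beta})$ this does not vanish, since $p(t_{n,\beta})\notin Q_h$, so it must be kept and bounded by $\|p_{n,\beta}-p(t_{n,\beta})\|\,\|\nabla\psi^w_{n,\beta}\|$, and then, after Young's inequality and Lemma \ref{n beta int tn-1 tn+1}, it contributes $\mathcal{O}(k_{\max}^4\|p_{tt}\|_{2,0}^2)$.

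Finally, choosing the Young parameters small relative to $\mu$ and $\gamma$ absorbs all residual $\|\psi^w_{n,\beta}\|$- and $\|\nabla\psi^w_{n,\beta}\|$-terms into the left-hand side, which gives $\sum_{n=1}^{m-1}\widehat{k}_n\big(\tfrac{\mu}{2}\|\psi^w_{n,\beta}\|^2+\tfrac{\gamma}{2}\|\nabla\psi^w_{n,\beta}\|^2\big)\le\mathcal{O}(h^{2r+2}+k_{\max}^4)$; combining this with $e^w_{n,\beta}=\psi^w_{n,\beta}-\eta^w_{n,\beta}$, the bound $\sum\widehat{k}_n\|\nabla\eta^w_{n,\beta}\|^2\le t^{\ast}Ch^{2r+2}\|w\|_{\infty,r+2}^2$ from \eqref{Stokes-type projection}, and the Poincar\'e inequality yields \eqref{lemma-p-conclusion}. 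I expect the main obstacle to be the bookkeeping in the right-hand side: one must split every factor multiplying $\|\nabla\psi^w_{n,\beta}\|$ so that the complementary factor is either uniformly bounded or an already-controlled $\widehat{k}_n$-weighted sum, and in particular one must \emph{not} invoke the inverse inequality on $\psi^w$ — an $h^{-1}$ there would spoil the convergence rate. It is precisely the previously established $H^1$ velocity estimate \eqref{proof of max nabla u-u^h T-T^h error} (available because the stronger mesh/time conditions \eqref{time-diameter-condition}--\eqref{time-condition-H1} are assumed here) that makes the $\psi^u_{n,\alpha}$ and convective terms go through without such an inverse-inequality loss.
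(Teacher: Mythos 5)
Your proposal is correct and follows essentially the same route as the paper: test the $\beta$-averaged momentum error equation with $\psi^w_{n,\beta}$, use the second error equation (with $\varphi^h=\psi^w_{n,\beta}$ and $\zeta^h=\phi^h_{n,\beta}-\mathcal{S}_h\phi_{n,\beta}$) to convert $\mu(\nabla\psi^u_{n,\beta},\nabla\psi^w_{n,\beta})$ into $\mu\|\psi^w_{n,\beta}\|^2$, kill the pressure coupling by discrete divergence-freeness, absorb the right-hand side using the previously established $L^2$ and $H^1$ error bounds together with Young's inequality, and finish by the triangle inequality and the Stokes-type projection estimate. The handling of the individual right-hand-side terms (linear, convective, cubic, truncation) also matches the paper's treatment, so no further comparison is needed.
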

    \begin{proof}
        We set \( v^h = \psi^w_{n,\beta} \) in \eqref{error equation 1}, \( \varphi^h = \psi^w_{n,\beta} \) in \eqref{error equation 2}, \( \zeta^h = \psi^p_{n,\beta}\) in \eqref{error equation 3}, and \(\zeta^h=\phi_{n,\beta}^h - \mathcal{S}_h ( \phi_{n,\beta} ) \) in \eqref{error equation 3}
        \begin{align}
            &\frac{1}{\widehat{k}_n}(e^u_{n,\alpha}, \psi^w_{n,\beta}) 
            + \mu (\nabla \psi^u_{n,\beta}, \nabla \psi^w_{n,\beta}) 
            +\gamma (\nabla \psi^w_{n,\beta},\nabla \psi^w_{n,\beta})
            + \rho (e^u_{n,\beta}, \psi^w_{n,\beta}) 
            \label{error equation 1_p} \\
            =& \nu b(u_{n,\beta}, u_{n,\beta}, \psi^w_{n,\beta})
            \!-\!\nu b(u_{n,\beta}^h, u_{n,\beta}^h, \psi^w_{n,\beta})
            \!+\! (\psi^p_{n,\beta}, \nabla \cdot\psi^w_{n,\beta})
            \!+\! \sigma\xi (e^T_{n,\beta}, \psi^w_{n,\beta})
            \notag  \\
            &+ \! \lambda (|u_{n,\beta}|^2 u_{n,\beta}  - |u_{n,\beta}^h|^2 u_{n,\beta}^h,\psi^w_{n,\beta}) 
            - \mathcal{T}_{n,1}(u,w,p,T, \psi^w_{n,\beta}), \notag \\
            &\mu (\psi^w_{n,\beta}, \psi^w_{n,\beta}) 
            - \mu \big( \phi_{n,\beta}^h - \mathcal{S}_h (\phi_{n,\beta}),\nabla \cdot \psi^w_{n,\beta} \big)
            - \mu (\nabla \psi^u_{n,\beta}, \nabla \psi^w_{n,\beta})
            =0, \label{error equation 2_p}
            \\
            &(\nabla \cdot \psi^w_{n,\beta} , \psi^p_{n,\beta})=0,
            \label{error equation 3_p}
            \\
            &\mu \big( \nabla \cdot \psi^w_{n,\beta} , \phi_{n,\beta}^h - \mathcal{S}_h (\phi_{n,\beta}) \big)=0,
            \label{error equation 4_p}
        \end{align}
        We  add \eqref{error equation 1_p} - \eqref{error equation 4_p} together and sum the resulting equation over $n$ from $1$ to $m-1$ 
        \begin{align} \label{lemma-p-eq2}
            &\sum_{n=1}^{m-1}  \mu \widehat{k}_n \|\psi^w_{n,\beta}\|^2 
            + \sum_{n=1}^{m-1}  \gamma \widehat{k}_n \|\nabla \psi^w_{n,\beta}\|^2 
            \\
            \leq&
            \sum_{n=1}^{m-1} \widehat{k}_n | (\widehat{k}_n^{-1} e^u_{n,\alpha},\psi^w_{n,\beta})|
            +\sum_{n=1}^{m-1}\widehat{k}_n |\rho| |(e^u_{n,\beta},\psi^w_{n,\beta})|
            +\sum_{n=1}^{m-1}\widehat{k}_n \sigma |\xi| |(e^T_{n,\beta},\psi^w_{n,\beta})| \notag \\
            &
            +\sum_{n=1}^{m-1} \nu\widehat{k}_n | b(u_{n,\beta},u_{n,\beta},\psi^w_{n,\beta})
            - b(u^h_{n,\beta},u^h_{n,\beta},\psi^w_{n,\beta})| \notag \\
            &
            +\sum_{n=1}^{m-1} \widehat{k}_n \lambda 
            |(|u_{n,\beta}|^2 u_{n,\beta} - |u^h_{n,\beta}|^2u^h_{n,\beta},\psi^w_{n,\beta})|
            + \sum_{n=1}^{m-1} \widehat{k}_n |\mathcal{T}_{n,1}(u,w,p,T, \psi^w_{n,\beta})|. \notag 
        \end{align}
        By Cauchy-Schwarz inequality, Poincar\'e inequality and Young's inequality 
        \begin{align} \label{lemma-p-eq3}
            \begin{split}
            &\sum_{n=1}^{m-1} \widehat{k}_n (\widehat{k}_n^{-1} e^u_{n,\alpha},\psi^w_{n,\beta})
            + \sum_{n=1}^{m-1}\widehat{k}_n \rho (e^u_{n,\beta},\psi^w_{n,\beta}) 
            +\sum_{n=1}^{m-1}\widehat{k}_n \sigma\xi(e^T_{n,\beta},\psi^w_{n,\beta})\\
            \leq& \frac{C}{\gamma} \sum_{n=1}^{m-1} \widehat{k}_n \| \widehat{k}_n^{-1} e^u_{n,\alpha} \|^{2} 
            + \frac{C |\rho|}{\gamma} \sum_{n=1}^{m-1} \widehat{k}_n \| \nabla e^u_{n,\beta} \|^2
            + \frac{C \sigma^2 |\xi |^2}{\gamma} \sum_{n=1}^{m-1} \widehat{k}_n \| \nabla e^T_{n,\beta} \|^2
            \\
            &+ \frac{\gamma}{8} \sum_{n=1}^{m-1} \widehat{k}_n  \| \nabla \psi^w_{n,\beta}\|^2 
        \end{split}
        \end{align}
        By similar argument to \eqref{cubic term inequality 1}
        \begin{align} \label{lemma-p-eq4}
            \begin{split}
            &\sum_{n=1}^{m-1}	\widehat{k}_n \lambda (|u_{n,\beta}|^2 u_{n,\beta} - |u^h_{n,\beta}|^2u^h_{n,\beta}, \psi^w_{n,\beta}) 
            \\
            \leq& C \lambda \sum_{n=1}^{m-1} \widehat{k}_n \big( \| u_{n,\beta} \|_{L^{4}}^{2} 
            + \| e_{n,\beta}^{u} \|_{1}^{2} \big) \| \nabla e_{n,\beta}^{u} \| 
            \| \nabla \psi^w_{n,\beta} \|
            \\ 
            \leq& \frac{C \lambda^2}{\gamma} \sum_{n=1}^{m-1}	\widehat{k}_n 
            \big( \|u_{n,\beta}\|_{L^4}^4 + \| e_{n,\beta}^{u} \|_{1}^{4} \big)
            \| \nabla e_{n,\beta}^u\|^2
            +\frac{\gamma}{8} \sum_{n=1}^{m-1} \widehat{k}_n  \|\nabla \psi^w_{n,\beta}\|^2 \\
            \leq& \frac{C(\theta) \lambda^2}{\gamma} \big( \| u \|_{L^{\infty}(L^4)}^{4} 
            +\max_{0 \leq n \leq M}  \| e_n^u \|_1^{4} \big)
            \sum_{n=1}^{m-1}	\widehat{k}_n \| \nabla e_{n,\beta}^u\|^2
            + \frac{\gamma}{8} \sum_{n=1}^{m-1} \widehat{k}_n  \|\nabla \psi^w_{n,\beta}\|^2.
            \end{split}
        \end{align}
        By \eqref{b 1 1 1} and Young's inequality
        \begin{align}
            &\sum_{n=1}^{m-1} \nu\widehat{k}_n( b(u_{n,\beta},u_{n,\beta},\psi^w_{n,\beta})
            - b(u^h_{n,\beta},u^h_{n,\beta},\psi^w_{n,\beta}))
            \label{lemma-p-eq5} \\
            =& \sum_{n=1}^{m-1}  \nu \widehat{k}_n \big( - b(e_{n,\beta}^{u},u_{n,\beta},\psi^w_{n,\beta}) 
            - b(e_{n,\beta}^u,e_{n,\beta}^{u},\psi^w_{n,\beta}) 
            - b(u_{n,\beta},e_{n,\beta}^{u},\psi^w_{n,\beta})  \big) 
            \notag \\
            \leq&
            \sum_{n=1}^{m-1}C \widehat{k}_n  \nu(\|\nabla u_{n,\beta}\| + \|\nabla e_{n,\beta}^u \|)\|\nabla e^u_{n,\beta}\|\|\nabla \psi^w_{n,\beta}\| 
            \notag \\
            \leq&
            \frac{C(\theta) \nu^2}{\gamma} \big( \| \nabla u \|_{\infty,0}^2 + \max_{0 \leq n \leq M}  \| e_n^u \|_1^2 \big) \sum_{n=1}^{m-1} \widehat{k}_n \|  \nabla e_{n,\beta}^u\|^2
            +\frac{\gamma}{8} \sum_{n=1}^{m-1} \widehat{k}_n  \|\nabla \psi^w_{n,\beta}\|^2. 
            \notag
        \end{align}
        By similar argument to \eqref{truncation error estimate equation}
        \begin{align} 
            &\sum_{n=1}^{m-1} \widehat{k}_n  | \mathcal{T}_{n,1}(u,w,p,T, \psi^w_{n,\beta})|
            \label{lemma-p-eq6} \\
            \leq&
            \frac{\gamma}{8} \sum_{n=1}^{m-1} \widehat{k}_n  \| \nabla \psi^w_{n,\beta}\|^2
            + \frac{C(\theta)}{\gamma} \big( \nu^2 \| u \|_{\infty,1} + \lambda^2 \| u \|_{L^{\infty}(L^{4})}^{4} \big) k_{\rm{max}}^4 \| \nabla u_{tt} \|_{2,0}^{2}
            \notag \\
            +& \frac{C(\theta) k_{\rm{max}}^4}{\gamma} \big( \|u_{ttt}\|_{2,0}^2 
            + \mu^2 \|\nabla u_{tt}\|_{2,0}^2 + \rho^2 \|u_{tt}\|_{2,0}^2 
            + \gamma^{2} \|\nabla w_{tt}\|_{2,0}^2 + \| p_{tt}\|_{2,0}^2 \notag \\
            &\qquad \qquad \quad + \sigma^2 |\xi |^2\|T_{tt}\|_{2,0}^2 \big). \notag 
        \end{align}
        By \eqref{lemma-p-eq2} - \eqref{lemma-p-eq6} and error estimates in \eqref{proof of max u-u^h T-T^h error}, \eqref{proof of max nabla u-u^h T-T^h error}, we have 
        \begin{align} \label{psiw-conclusion}
        &\sum_{n=1}^{m-1}  \mu \widehat{k}_n \|\psi^w_{n,\beta}\|^2 
        + \frac{\gamma}{2} \sum_{n=1}^{m-1} \widehat{k}_n \|\nabla \psi^w_{n,\beta}\|^2 
        \leq \mathcal{O} (h^{2r+2} + k_{\max}^{4}).
        \end{align}
        By triangle inequality, the approximation in \eqref{Stokes-type projection} and \eqref{psiw-conclusion}, we derive
        \eqref{lemma-p-conclusion}. 
    \end{proof}

    \begin{theorem}\label{Intermediate conclusion 4}
        Suppose Assumptions \ref{u_p_T_space}, \ref{time_condition_2} and 
        conditions \eqref{time-diameter-condition}, \eqref{time-condition-H1} hold. 
        \Cref{fully discrete formulations scheme} has the following error estimate for pressure 
        \begin{equation} \label{nabla p-p^h error}
        \sum_{n=1}^{M-1} \widehat{k}_n
        (\|p(t_{n,\beta})- p_{n,\beta}^h\|^2)
        \leq \mathcal{O}(h^{2r+2} + k_{\max}^4).
        \end{equation}
    \end{theorem}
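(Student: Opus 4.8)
The plan is to recover the discrete pressure from the velocity momentum error equation via the discrete inf-sup ($LBB^{h}$) condition \eqref{LBB}, and then to absorb every term on the right-hand side with the velocity, temperature and auxiliary-variable estimates already established in \Cref{Intermediate conclusion}, \Cref{Intermediate conclusion 2} and \Cref{Intermediate conclusion 3}. First I would write $p(t_{n,\beta}) - p_{n,\beta}^{h} = \eta^{p}_{n,\beta} - \psi^{p}_{n,\beta}$ and dispose of the projection part immediately: by \eqref{Stokes-type projection},
\begin{align*}
\sum_{n=1}^{M-1}\widehat{k}_{n}\|\eta^{p}_{n,\beta}\|^{2}
\le C t^{\ast} h^{2r+2}\big(\|u\|_{\infty,r+2}+\|w\|_{\infty,r+2}+\|p\|_{\infty,r+1}\big)^{2}
= \mathcal{O}(h^{2r+2}).
\end{align*}
It then remains to bound $\sum_{n=1}^{M-1}\widehat{k}_{n}\|\psi^{p}_{n,\beta}\|^{2}$. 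For each fixed $n$, \eqref{LBB} gives $\|\psi^{p}_{n,\beta}\| \le C\sup_{v^{h}\in X_{h}\setminus\{0\}}(\nabla\cdot v^{h},\psi^{p}_{n,\beta})/\|\nabla v^{h}\|$, so I would solve \eqref{error equation 1} for $(\psi^{p}_{n,\beta},\nabla\cdot v^{h})$ and estimate each of the remaining terms after dividing by $\|\nabla v^{h}\|$.

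Using Poincar\'e's inequality to convert $\|v^{h}\|$ into $\|\nabla v^{h}\|$, the individual contributions to $\|\psi^{p}_{n,\beta}\|$ are: $C\|\widehat{k}_{n}^{-1}\psi^{u}_{n,\alpha}\|$ from the discrete time-derivative term; $\mu\|\nabla\psi^{u}_{n,\beta}\| + \gamma\|\nabla\psi^{w}_{n,\beta}\|$ from the dissipation terms; $C\|\widehat{k}_{n}^{-1}\eta^{u}_{n,\alpha}\| + C|\rho|\big(\|\nabla\psi^{u}_{n,\beta}\|+\|\eta^{u}_{n,\beta}\|\big) + C\sigma|\xi|\big(\|\psi^{T}_{n,\beta}\|+\|\eta^{T}_{n,\beta}\|\big)$ from the projection residual, linear damping and buoyancy; and from the nonlinearity, after writing $b(u_{n,\beta},u_{n,\beta},v^{h})-b(u^{h}_{n,\beta},u^{h}_{n,\beta},v^{h}) = -b(e^{u}_{n,\beta},u_{n,\beta},v^{h}) - b(u^{h}_{n,\beta},e^{u}_{n,\beta},v^{h})$ and applying \eqref{b 1 1 1}, a bound $C\nu\big(\|\nabla u_{n,\beta}\|+\|\nabla u^{h}_{n,\beta}\|\big)\|\nabla e^{u}_{n,\beta}\|$, where $\|\nabla u^{h}_{n,\beta}\|$ is uniformly bounded by virtue of the $H^{1}$ velocity estimate \eqref{proof of max nabla u-u^h T-T^h error} and \Cref{u_p_T_space}. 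The cubic difference, treated with \eqref{vector_norm_inequality_2}, H\"older's inequality and the Sobolev embedding as in \eqref{cubic term inequality 1}, contributes $C\lambda\big(\|u_{n,\beta}\|_{1}^{2}+\|u^{h}_{n,\beta}\|_{1}^{2}\big)\|\nabla e^{u}_{n,\beta}\|$. Finally the consistency residual $\mathcal{T}_{n,1}(u,w,p,T,v^{h})$ is handled exactly as in \Cref{truncation error estimate}, with $v^{h}$ in place of $\psi^{u}_{n,\beta}$, producing terms of the form $k_{\max}^{3/2}\big(\int_{t_{n-1}}^{t_{n+1}}(\|u_{ttt}\|^{2}+\mu^{2}\|\nabla u_{tt}\|^{2}+\cdots+\|p_{tt}\|^{2}+\sigma^{2}|\xi|^{2}\|T_{tt}\|^{2})\,\mathrm{d}t\big)^{1/2}$.

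Squaring the resulting bound on $\|\psi^{p}_{n,\beta}\|$, multiplying by $\widehat{k}_{n}$ and summing over $n=1,\dots,M-1$, I would then invoke: $\sum\widehat{k}_{n}\|\widehat{k}_{n}^{-1}\psi^{u}_{n,\alpha}\|^{2}=\mathcal{O}(h^{2r+2}+k_{\max}^{4})$ from \eqref{proof of max nabla u-u^h T-T^h error}/\eqref{psi H1 conclusion}; $\sum\widehat{k}_{n}\|\nabla\psi^{u}_{n,\beta}\|^{2}$ and $\sum\widehat{k}_{n}(\|\nabla e^{u}_{n,\beta}\|^{2}+\|\nabla e^{T}_{n,\beta}\|^{2}+\|\psi^{T}_{n,\beta}\|^{2})=\mathcal{O}(h^{2r+2}+k_{\max}^{4})$ from \eqref{proof of max u-u^h T-T^h error} and \eqref{error estimate psi gamma inequality}; $\sum\widehat{k}_{n}\|\nabla\psi^{w}_{n,\beta}\|^{2}=\mathcal{O}(h^{2r+2}+k_{\max}^{4})$ from \eqref{psiw-conclusion}; $\sum\widehat{k}_{n}(\|\eta^{u}_{n,\beta}\|^{2}+\|\eta^{T}_{n,\beta}\|^{2})\le C t^{\ast}h^{2r+4}(\cdots)$ and $\sum\widehat{k}_{n}\|\widehat{k}_{n}^{-1}\eta^{u}_{n,\alpha}\|^{2}\le C h^{2r+4}(\|u_{t}\|_{2,r+2}^{2}+\|w_{t}\|_{2,r+2}^{2}+\|p_{t}\|_{2,r+1}^{2})$ from \eqref{Stokes-type projection} and the step-ratio condition in \eqref{time-diameter-condition}; while the $\mathcal{T}_{n,1}$ contribution telescopes to $\mathcal{O}(k_{\max}^{4})$ against the Bochner norms in \Cref{u_p_T_space}. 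Adding back the $\eta^{p}$ part gives \eqref{nabla p-p^h error}. The main obstacle is controlling the discrete-time-derivative term $\widehat{k}_{n}^{-1}\psi^{u}_{n,\alpha}$: its $\widehat{k}_{n}$-weighted $\ell^{2}$ sum has the right order only because of \Cref{Intermediate conclusion 2}, which itself needs the mesh/step conditions \eqref{time-diameter-condition}--\eqref{time-condition-H1}; likewise the nonlinear and cubic terms need the uniform $H^{1}$ bound on $u^{h}_{n,\beta}$ coming from \eqref{proof of max nabla u-u^h T-T^h error}. Beyond securing those two inputs, the argument is a standard inf-sup pressure-recovery computation, so the remaining work is bookkeeping of the already-proven rates.
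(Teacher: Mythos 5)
Your proposal is correct and follows essentially the same route as the paper's proof: express $(\psi^p_{n,\beta},\nabla\cdot v^h)$ from the momentum error equation, recover $\|\psi^p_{n,\beta}\|$ via the $LBB^h$ condition \eqref{LBB}, and absorb every right-hand-side term using the estimates already established in Theorems \ref{Intermediate conclusion}, \ref{Intermediate conclusion 2} and Lemma \ref{Intermediate conclusion 3} together with the Stokes-projection approximation and the consistency bounds. The only cosmetic difference is that you split into $\psi$ and $\eta$ components where the paper works with $e^u$, $e^w$ directly, which changes nothing of substance.
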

    \begin{proof}
        By \eqref{error equation 1}, definition of Stokes-type projection $\mathcal{S}_h p_{n,\beta}$ in \eqref{Stokes-type-projection-defination}, Poincar\'e inequality and similar argument to \eqref{lemma-p-eq4} - \eqref{lemma-p-eq6} 
        \begin{align}\label{p - Shp n beta}
            &(\psi^p_{n,\beta}, \nabla \cdot v^h) 
            \\
            =&
            (\frac{1}{\widehat{k}_n} e^u_{n,\alpha}, v^h) 
            +\mu (\nabla e^u_{n,\beta}, \nabla v^h) 
            +\gamma (\nabla e^w_{n,\beta}, \nabla v^h)
            +\rho(e^u_{n,\beta},v^h)\notag
            \\
            &+\nu b(u_{n,\beta}^h, u_{n,\beta}^h, v^h) - \nu b(u_{n,\beta}, u_{n,\beta}, v^h)
            -\lambda(|u_{n,\beta}|^2 u_{n,\beta} - |u^h_{n,\beta}|^2u^h_{n,\beta},v^h)\notag
            \\
            &+(p_{n,\beta} -\mathcal{S}_h p_{n,\beta}, \nabla \cdot v^h)
            - \sigma \xi (e^T_{n,\beta}, v^h)
            + \mathcal{T}_{n,1}(u,w,p,T, v^h) \notag
            \\
            \leq& C(\theta) \| \nabla v^h \| \Big\{ \| {\widehat{k}_n}^{-1} e^u_{n,\alpha} \| 
            + \mu \| \nabla e^u_{n,\beta} \| 
            + \gamma \| \nabla e^w_{n,\beta} \| 
            + |\rho| \| \nabla e^u_{n,\beta} \|  
            + \| \eta_{n,\beta}^p \| \notag \\
            &+ \nu \big( \| \nabla u \|_{\infty,0} + \| |e^u| \|_{\infty,1} \big) 
            \|\nabla e^u_{n,\beta}\|  
            + \lambda \big( \| u \|_{L^{\infty}(L^4)}^{2}
            + \max_{0 \leq n \leq M} \| e_\| |e^u| \|_{\infty,1}^2 \big) \| \nabla e_{n,\beta}^{u} \| 
            \notag \\
            &+ \sigma |\xi| \|\nabla e_{n,\beta}^T\|+ \big( \nu \| u \|_{\infty,2} + \lambda \| u \|_{L^{\infty}(L^{4})}^{2} \big)
            \Big( k_{\rm{max}}^3 \int_{t_{n-1}}^{t_{n+1}}\|\nabla u_{tt}\|^2  \mathrm{d}t \Big)^{\frac{1}{2}}   \notag \\
            &+ \Big[ \!k_{\rm{max}}^3 \!\!\! \int_{t_{n-1}}^{t_{n+1}} \!\!\!\big(
            \|u_{ttt}\|^2 \!+\! \mu^2 \|\nabla u_{tt}\|^2 \!+\! \rho^2 \|u_{tt}\|^2 \!+\! \gamma^{2} \|\nabla w_{tt}\|^2 \!+\! \| p_{tt}\|^2 \!+\! \sigma^2|\xi|^2\|T_{tt}\|^2
            \big)	 \mathrm{d}t \! \Big]^{\frac{1}{2}} \Big\}. \notag 
        \end{align}
        where $\| |e^u| \|_{\infty,1} = \max_{0 \leq n \leq M}\| e_{n}^{u} \|_{1}$.
        By the $LBB^h$ condition in \eqref{LBB}, 
        \begin{align}\label{p-LBB}
            \|\psi_{n,\beta}^p\| \leq C \sup_{v^h \in X_h \setminus \{0\}} 
            \frac{(\nabla \cdot v^h, \psi_{n,\beta}^p)}{\|\nabla v^h\|}.
        \end{align}
        Hence we combine \eqref{p - Shp n beta} and \eqref{p-LBB} to have 
        \begin{align}
            &\sum_{n=1}^{M-1}\widehat{k}_n \| \psi_{n,\beta}^p \|^{2}  \\
            &\leq \!C(\theta) \!\sum_{n=1}^{M-1} \!\widehat{k}_n 
            \big( \| {\widehat{k}_n}^{-1} e^u_{n,\alpha} \|^{2} 
            \!+\! \mu^{2} \| \nabla e^u_{n,\beta} \|^{2} 
            \!+\! \gamma^{2} \| \nabla e^w_{n,\beta} \|^{2} \!+\! \rho^{2} \| \nabla e^u_{n,\beta} \|^{2}
            \!+\! \sigma^2 |\xi|^2 \| \nabla e^T_{n,\beta} \|^{2}  \big)  \notag \\
            &+ C(\theta) T h^{2r+2} \big( \| u \|_{\infty,r+2} + \| w \|_{\infty,r+2} 
            + \| p \|_{\infty,r+1} \big)^{2} \notag \\
            &+ C(\theta) \Big[\nu^2 \big( \| \nabla u \|_{\infty,0}^2 + \| | e^{u}| \|_{\infty,1}^2 \big)
            + \lambda^2 \big( \| u_{n,\beta} \|_{L^{\infty}(L^4)}^{4}
            + \| |e^{u}| \|_{\infty,1}^{4} \big) \Big]
            \sum_{n=1}^{M-1}\widehat{k}_n \|\nabla e^u_{n,\beta}\|^2 \notag \\
            &+ C(\theta)\big( \nu^2 \| u \|_{\infty,2}^2 + \lambda^2 \| u \|_{L^{\infty}(L^{4})}^{4} \big)
            k_{\rm{max}}^4 \|\nabla u_{tt}\|_{2,0}^2 \notag \\
            &+ C(\theta) k_{\rm{max}}^4 \big(  \|u_{ttt}\|_{2,0}^2 
            \!+\! \mu^2 \|\nabla u_{tt}\|_{2,0}^2 \!+\! \rho^2 \|u_{tt}\|_{2,0}^2 
            \!+\! \gamma^{2} \|\nabla w_{tt}\|_{2,0}^2 \!+\! \| p_{tt}\|_{2,0}^2 
            \!+\! \sigma^2 |\xi |^2 \|T_{tt}\|_{2,0}^2  \big). \notag 
        \end{align}
        By error estimates in \eqref{proof of max u-u^h T-T^h error}, \eqref{proof of max nabla u-u^h T-T^h error}, \eqref{lemma-p-conclusion},  triangle inequality, the approximation of 
        $\mathcal{S}_h p_{n,\beta}$ in \eqref{Stokes-type projection} 
        and Lemma \ref{n beta int tn-1 tn+1}, we achieve \eqref{nabla p-p^h error}.
    \end{proof}

    \section{Numerical results}  
    \label{sec:sec5}
    In this section, we present several numerical experiments to validate the theoretical analysis and assess the performance of \Cref{fully discrete formulations scheme} with the parameter $\theta = 0.3$.
    We utilize Taylor-Hood $\mathbb{P}2/\mathbb{P}1$ finite element space for spatial discretization among all the experiments. 
    We begin by conducting convergence tests to verify both the spatial and temporal accuracy 
    of \Cref{fully discrete formulations scheme} and then examine the self-organizing dynamics of active fluids in a two-dimensional domain, using random initial conditions \cite{MR4736040}. 
    This investigation aims to assess the robustness of \Cref{fully discrete formulations scheme} and the effectiveness of the time-adaptive strategy guided by the minimal dissipation criterion.

    \subsection{Convergence test}
    To validate the convergence rate of Scheme \ref{fully discrete formulations scheme} 
    in both space and time, we construct the test problem on the unit square domain \( D = [0,1] \times [0,1] \) with the following exact solution
    \begin{align*}
    &\begin{bmatrix}
    u_{1} \\ u_{2}
    \end{bmatrix} = 
    \begin{bmatrix}
    ( -\cos(2\pi x+\pi)-1)\sin(2 \pi y) \exp(2t) \\
    -\sin(2\pi x) \cos(2\pi y) \exp(2t)
    \end{bmatrix},  \\
    &\begin{bmatrix}
    w_{1} \\ w_{2}
    \end{bmatrix} = 
    \begin{bmatrix}
    (-3x^2 + 3y^2 + 8\pi^2 \sin(2\pi y) \cos(2\pi x) - 4\pi^2 \sin(2\pi y)) \exp(2t) \\
    (6xy - 8\pi^2\sin(2\pi x)\cos(2\pi y))\exp(2t)
    \end{bmatrix},  \\
    &\ \ \phi = (x^3-3xy^2)\exp(2t), \quad  p = \sin(3\pi^2x)\cos(3\pi^2 y)\exp(-t)
    \\
    &\ \ T = \big( ( -\cos(2\pi x+\pi)-1)\sin(2 \pi y)-\sin(2\pi x) \cos(2\pi y) \big)\exp(2t).
    \end{align*}
    We set the physical parameters to \( \mu = 1 \), \( \gamma = 1 \), \( \nu = 1 \), \( \rho = 1 \), \( \lambda = 1 \), \(\kappa=1\), \(\sigma=1\)  and $\xi = (0, 1)^T$and simulate the problem on the time interval $[0,1]$.
    The exact solution decides the source function and boundary conditions.

    We set the constant time step size \( \Delta t = \frac{1}{4}, \frac{1}{8}, \frac{1}{16}, \frac{1}{32} \) and fix 
    the uniform mesh diameter \( h = \frac{1}{128} \) to verify the convergence rate in time. 
    Meanwhile we adjust \( h = \frac{1}{8}, \frac{1}{16}, \frac{1}{32}, \frac{1}{64} \) and keep
    \( \Delta t = 1 \times 10^{-5} \) to confirm the convergence rate in space. 

    As shown in Table~\ref{tab:L2-errors and convergence rates in time}, the observed temporal convergence rates for the velocity \( u \), auxiliary variable \(w\), \(\phi\), pressure \( p \) and the temperature $T$ are consistent with the expected second-order accuracy. 
    Similarly, Table~\ref{tab:L2-errors and convergence rates in space} and 
    demonstrate that the spatial convergence rates for the velocity component \( u \), auxiliary variable \( w \) and temperature $T$ are both third-order in the \( L^2 \) and second-order in the \( H^1 \) norms, while the convergence rates for \(\phi\) and pressure \( p \) are both second-order in the \( L^2 \) norm and first-order in the \( H^1 \) norm.

    \begin{table}
        \centering
        \caption{$L^2$-errors and convergence rates in time}
        \begin{tabular}{lllllllllll}
            \hline
            $1 / \Delta t$ & $\|u - u^h\|$ & Rate & $\|w - w^h\|$ & Rate & $\|\phi- \phi^h\|$ & Rate & $\|p - p^h \|$ & Rate & $\|T- T^h\|$ & Rate \\
            \hline
            $4$   & 3.02E-01 & —         & 1.59E+01& —         & 5.16E-01& —         & 1.59E+02& —      & 2.12E+00& —   \\
            $8$   & 5.73E-02 &2.3970 &3.02E+00& 2.3970    & 9.79E-02& 2.3968 & 3.00E+01 &2.4071 & 4.58E-01&2.2113   \\
            $16$ &1.18E-02 & 2.2764  & 6.23E-01 &2.2764    & 2.03E-02 & 2.2728   &6.18E+00  &2.2798   & 1.02E-01& 2.1614 \\
            $32$ & 2.62E-03&2.1745 & 1.38E-01& 2.1744    & 4.62E-03& 2.1331  & 1.37E+00& 2.1769  & 2.40E-02& 2.0938  \\
            \hline
        \end{tabular}
        \label{tab:L2-errors and convergence rates in time}
    \end{table}

    \begin{table}
        \caption{$L^2$-errors and convergence rates in space}
        \label{tab:L2_errors_convergence_space}
        \centering
        \begin{tabular}{lllllllllll}
            \hline
            $1 / h$ & $\|u - u^h\|$ & Rate & $\|w - w^h\|$ & Rate & $\|\phi - \phi^h\|$ & Rate & $\|p - p^h\|$ & Rate & $\|T- T^h\|$ & Rate \\
            \hline
            $8$   & 6.41E-03 & —         & 4.54E-01 & —         & 2.63E-02 & —         & 1.84E+00& —      & 8.27E-03& —   \\
            $16$   & 8.12E-04 & 2.9807 & 6.07E-02& 2.9028   & 2.26E-03& 3.5440 & 2.79E-01 & 2.7226 & 1.16E-03& 2.8361   \\
            $32$ & 1.02E-04 & 2.9930  & 7.73E-03 & 2.9739   & 3.21E-04 & 2.8138  &4.67E-02  &2.5797   & 1.49E-04& 2.9543 \\
            $64$ & 1.28E-05&2.9981 & 9.71E-04 & 2.9926   & 8.27E-05& 1.9560   & 9.74E-03 & 2.2611  & 1.88E-05& 2.9902  \\
            \hline
        \end{tabular}
        \label{tab:L2-errors and convergence rates in space}
    \end{table}

    \begin{table}
        \centering
        \caption{$H^1$-errors and convergence rates in space}
        \label{tab:H1-errors-space}
        \begin{tabular}{lllllllllll}
            \hline
            $1 / h$ & $\|u - u^h\|_{1}$ & Rate & $\|w - w^h\|_{1}$ & Rate 
            & $\|\phi - \phi^h\|_{1}$ & Rate & $\|p - p^h\|_{1}$ & Rate & $\|T - T^h\|_{1}$ & Rate \\
            \hline
            $8$   & 3.80E-01 & —         & 2.92E+01 & —         & 5.30E-01& —         & 3.78E+01& —      & 5.66E-01& —   \\
            $16$   & 9.78E-02& 1.9566 & 7.53E+00& 1.9558    & 1.64E-01& 1.6955 & 2.00E+01 & 0.9181  &1.43E-01&1.9832    \\
            $32$ & 2.47E-02 &1.9881   & 1.90E+00 & 1.9883    & 7.71E-02&1.0863   &1.06E+01  &0.9215    & 3.60E-02& 1.9901 \\
            $64$ &6.18E-03&1.9969  & 4.75E-01& 1.9970   & 3.83E-02& 1.0093   & 4.96E+00 & 1.0887   &9.02E-03& 1.9971   \\
            \hline
        \end{tabular}
        \label{tab:H1-errors and convergence rates in space}
    \end{table}

    \subsection{Thermal-driven active fluid system simulation}
    \label{subsec:self-organization}
    To investigate the long-term stability of \Cref{fully discrete formulations scheme}, we perform a numerical experiment simulating the spatial thermal-driven self-organization of bacterial active fluid on a unit square domain \cite{LSMW21_Nature}.
    The setup of the experiment is illustrated in \Cref{Initial-boundary values for thermal driven cavity flow}. 
    The left and right boundaries of the domain \( D = [0, 1]\times [0, 1]\) are maintained at fixed temperatures, \(T_1 = 0.5\) and \(T_0 = -0.5\), respectively, while the top and bottom boundaries are adiabatic, enforcing the condition \(\partial_n T = 0\).
    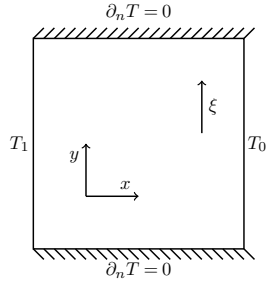
\begin{figure}
        \centering
        \scalebox{0.7}{ 
            \begin{tikzpicture}	
            
            \draw[thick] (0, 0) rectangle (4, 4);
            
            \foreach \i in {0, 0.2, ..., 4} {
                \draw[thick] (\i, 4) -- ++(0.2, 0.2);
            }
            
            \foreach \i in {0, 0.2, ..., 4} {
                \draw[thick] (\i, 0) -- ++(0.2, -0.2);
            }
            
            \draw[->, thick] (3.2, 2.2) -- (3.2, 3.2) node[midway, right] {$\xi$};
            \draw[->, thick] (1, 1) -- (2, 1) node[near end, above] {$x$};
            \draw[->, thick] (1, 1) -- (1, 2) node[near end, left] {$y$};
            
            \node at (4.2, 2) {$\ T_0$};
            \node at (-0.2, 2) {$T_1 \ $};
            \node at (2, 4.5) {$\partial_n T = 0$};
            \node at (2, -0.4) {$\partial_n T = 0$};
            
            \end{tikzpicture}
        }
        \caption{Initial-boundary values for thermal driven cavity flow}
        \label{Initial-boundary values for thermal driven cavity flow}
    \end{figure}
    The velocity boundary condition is no-slip, i.e., \( u|_{\partial D} = w|_{\partial D} = 0 \), at all four edges of the domain. 
    The initial condition for the velocity field is given by:
    \[
    u_0(x,y) = \left( \text{rand}(x,y), \text{rand}(x,y) \right), \qquad (x,y) \in D,
    \]
    where 'rand' denotes a uniform random number generator over the interval \([-1, 1]\).
    The physical parameters used in the simulation are: \(\mu = 0.045\), \(\nu = 0.003\), \(\beta = 0.5\), \(\alpha = -0.81\), \(\gamma = \mu^3\), \(\xi = (0, 1)^\mathrm{T}\), \(\sigma = 1\) and \(\kappa = 1\). 

    The simulation is carried out over the time interval \([0, 1]\) with a uniform time step size of \( \Delta t = 1/100 \). After an initial period of self-adjustment, the phase space trajectory of the bacterial active fluid is expected to rotate, either clockwise or counterclockwise, at a constant angle, forming a unidirectional vortex flow that eventually reaches a steady state.
    Figure~\ref{fig:velocity_glyph_temperature_comparison} shows the evolution of the velocity vector field and the velocity field over time at selected instances. 
    The velocity field appears disordered at \(t = 0\), but by \(t = 0.15\), small vortices begin to form. 
    As time progresses to \( t = 0.30 \), the vortices become more pronounced and organized, displaying a clear flow pattern. 
    Eventually, at \( t = 1.00 \), the system stabilizes into a circular, coherent structure, indicating that \Cref{fully discrete formulations scheme} achieves long-time stability. 
    As time goes on, the temperature field becomes more structured and develops clear gradients.
    \begin{figure}[htbp] 
        \centering
        \begin{minipage}[t]{0.24\linewidth}
            \centering
            \includegraphics[width=3.2cm]{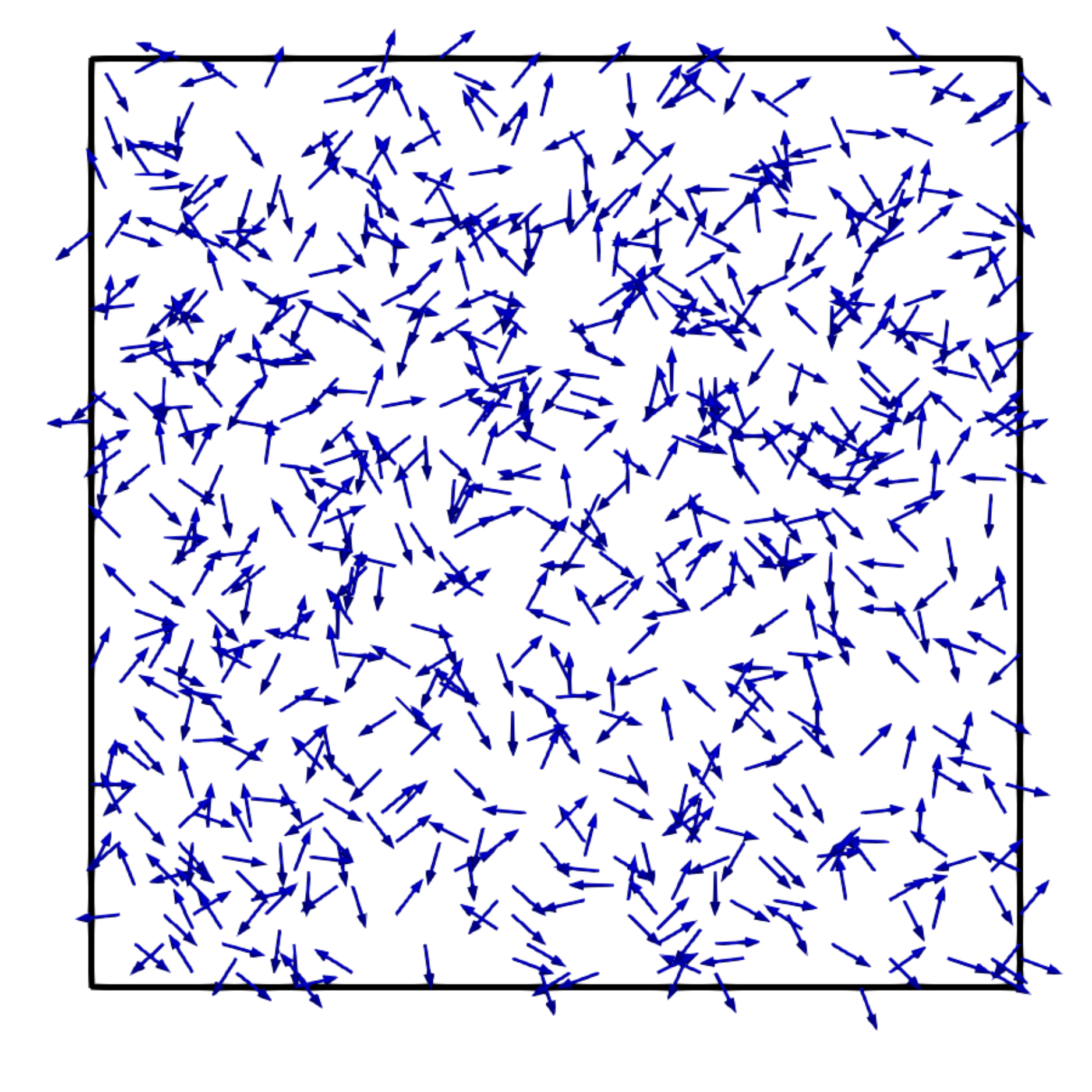} 
            \caption*{$t=0$}
        \end{minipage}
        \begin{minipage}[t]{0.24\linewidth}
            \centering
            \includegraphics[width=3.2cm]{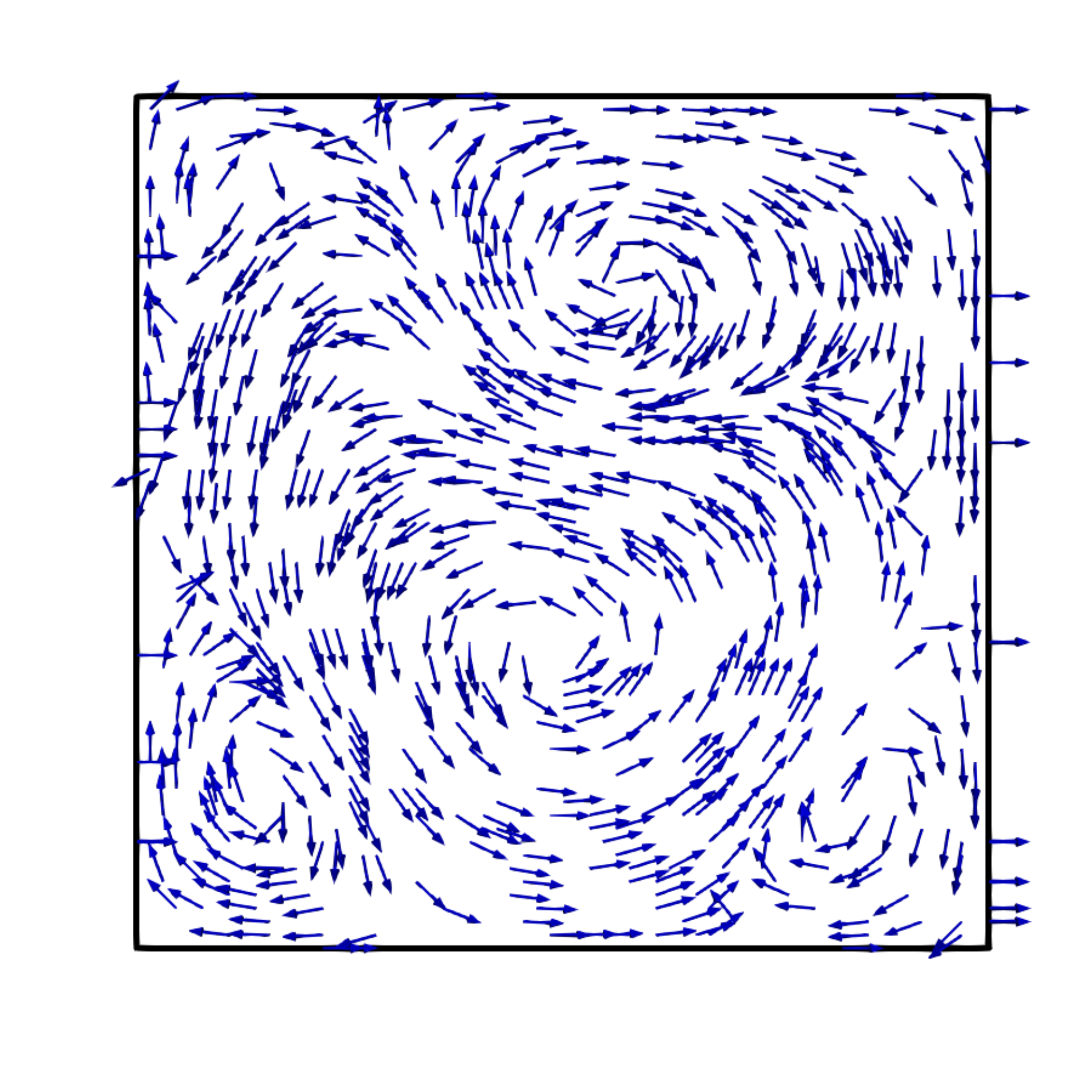} 
            \caption*{$t=0.15$}
        \end{minipage}
        \begin{minipage}[t]{0.24\linewidth}
            \centering
            \includegraphics[width=3.2cm]{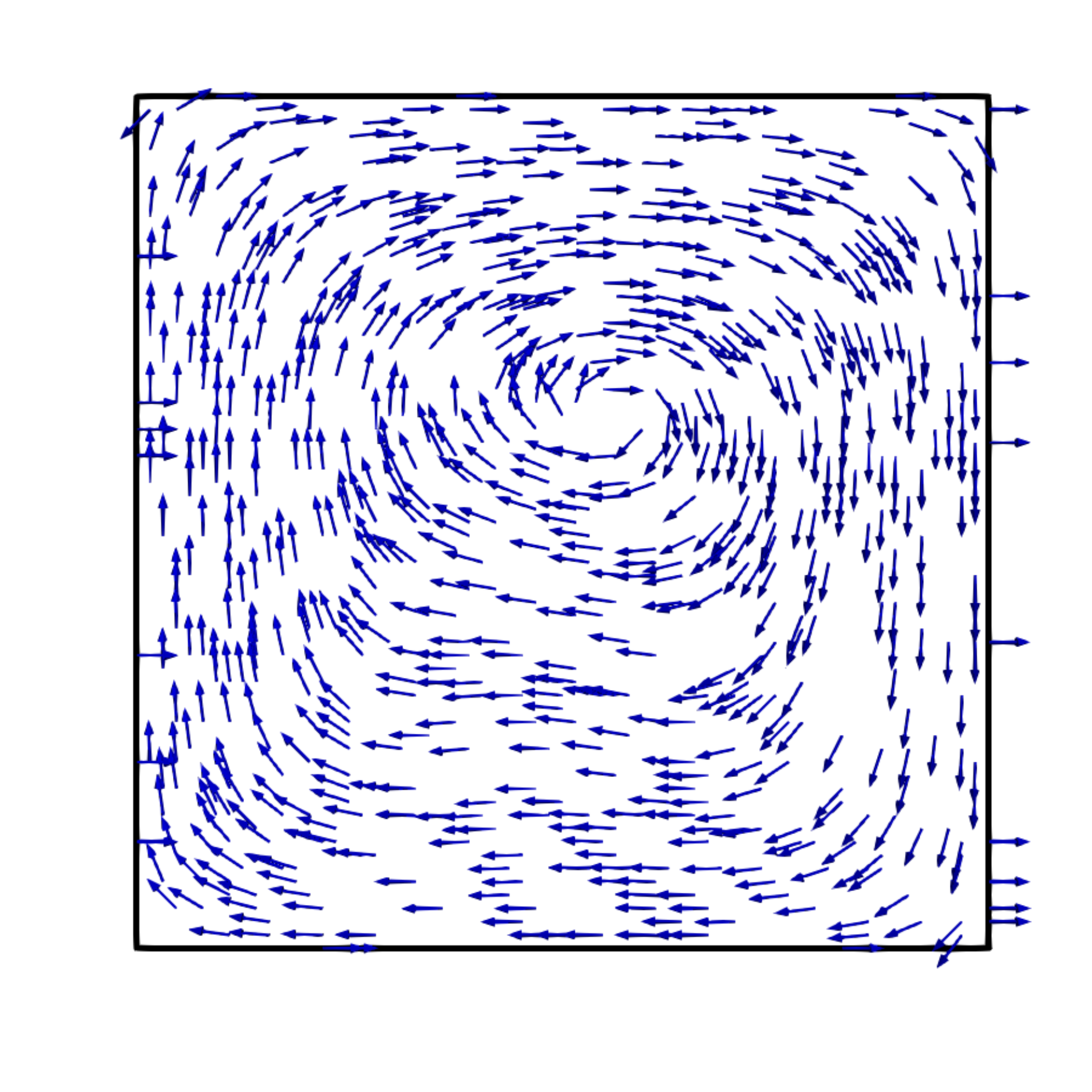} 
            \caption*{$t=0.30$}
        \end{minipage} 
        \begin{minipage}[t]{0.24\linewidth}
            \centering
            \includegraphics[width=3.2cm]{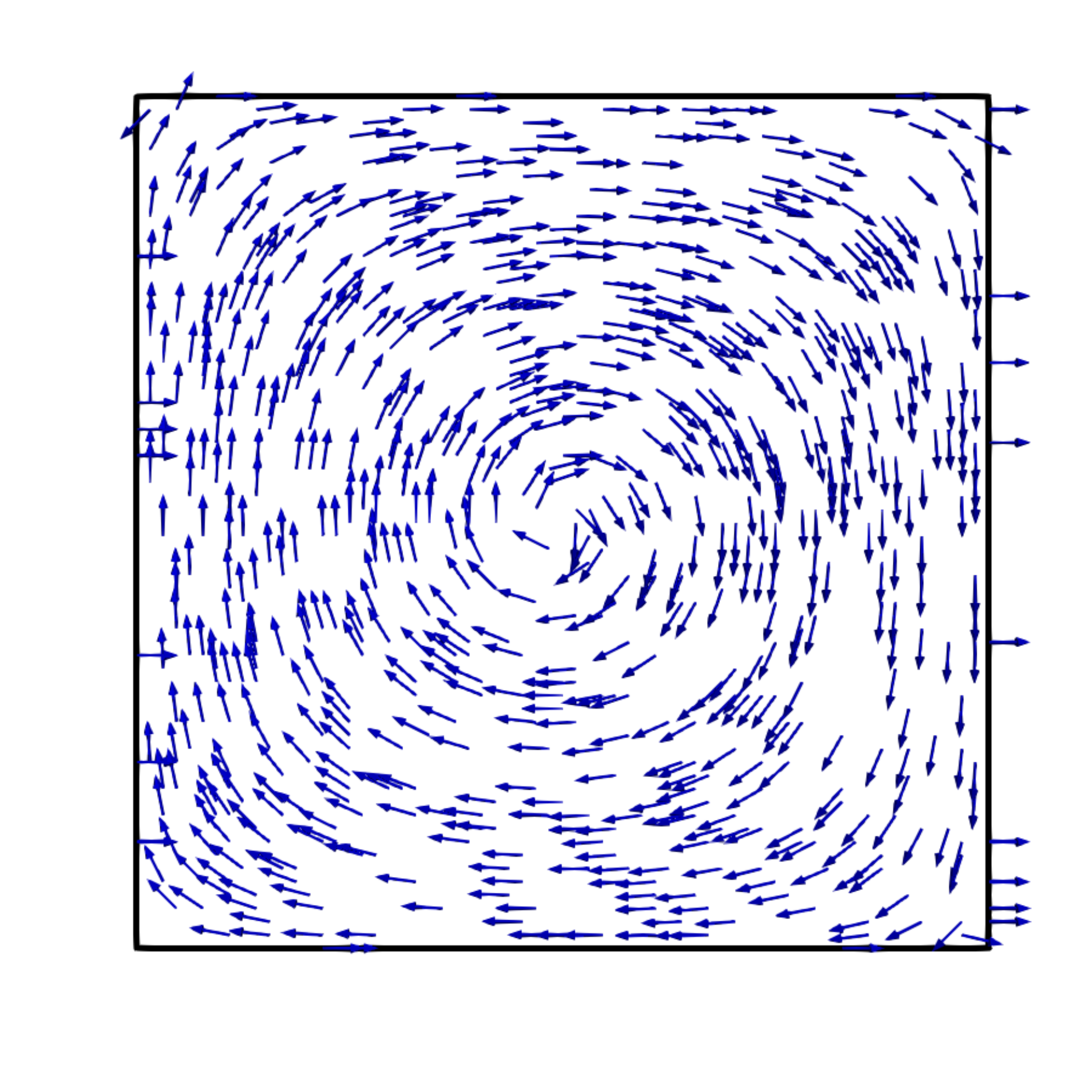} 
            \caption*{$t=1.00$}
        \end{minipage} \\
        \vspace{-0.1cm}
        \begin{minipage}[t]{0.24\linewidth}
            \centering
            \includegraphics[width=3.1cm]{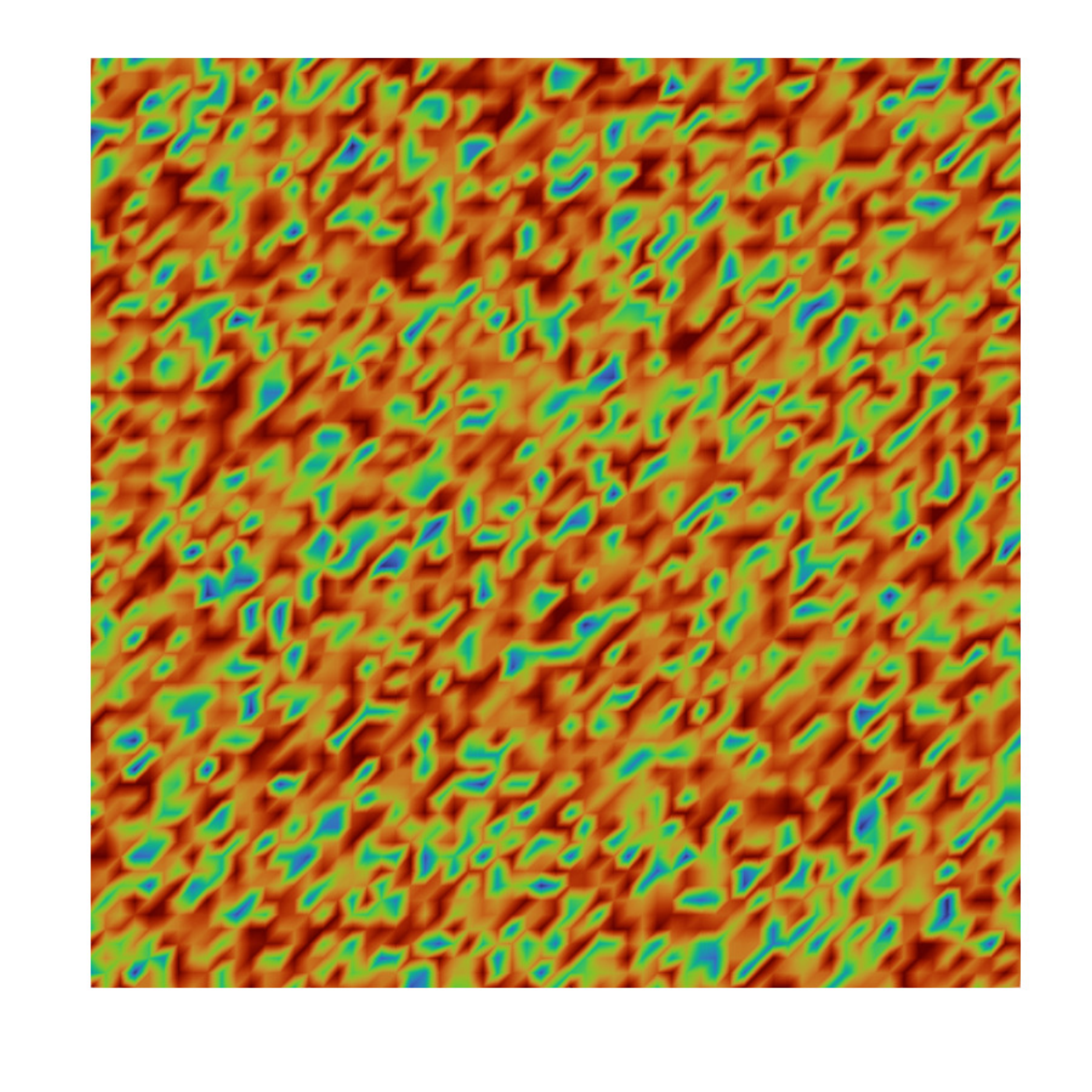} 
            \caption*{$t=0$}
        \end{minipage}
        \begin{minipage}[t]{0.24\linewidth}
            \centering
            \includegraphics[width=3.1cm]{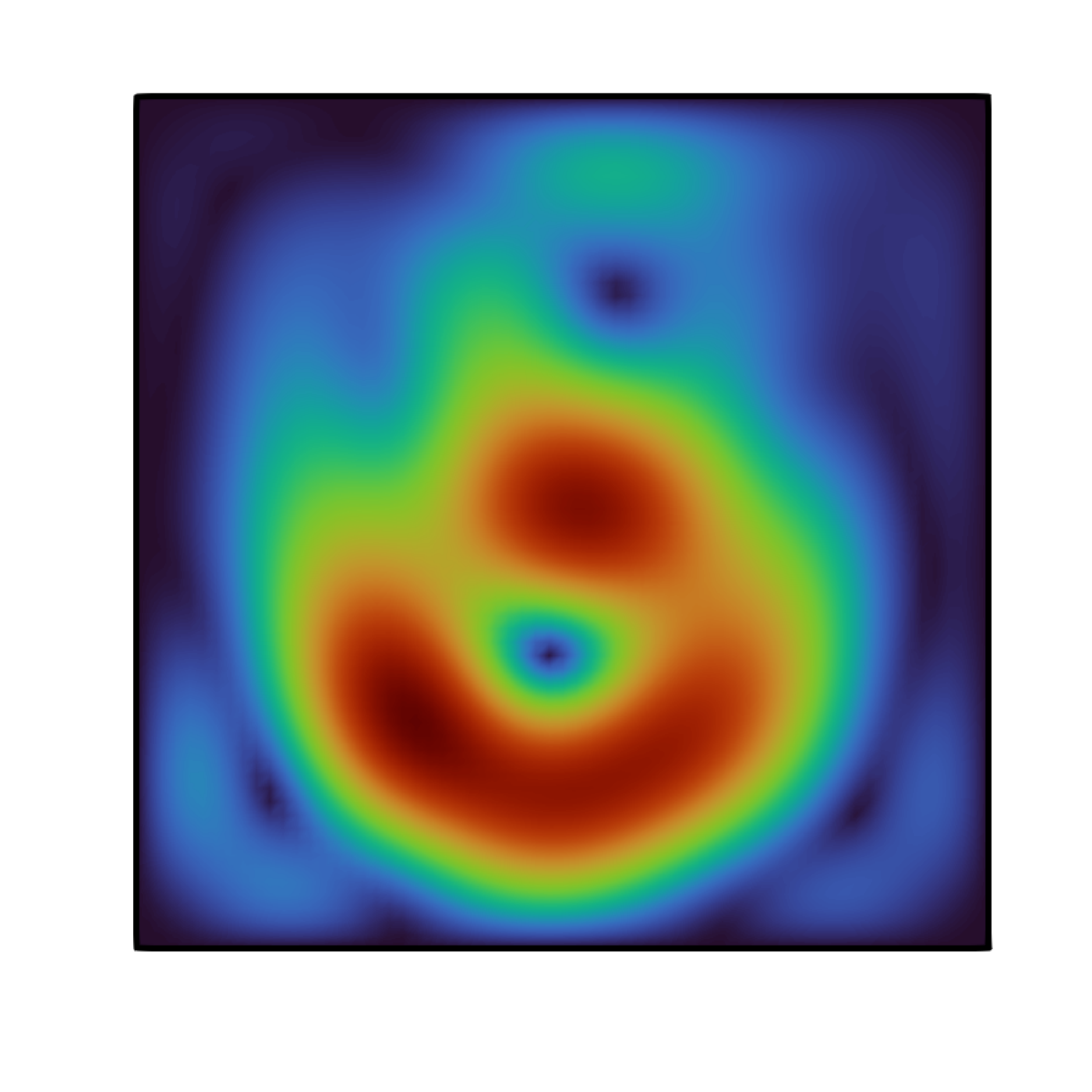} 
            \caption*{$t=0.15$}
        \end{minipage}
        \begin{minipage}[t]{0.24\linewidth}
            \centering
            \includegraphics[width=3.1cm]{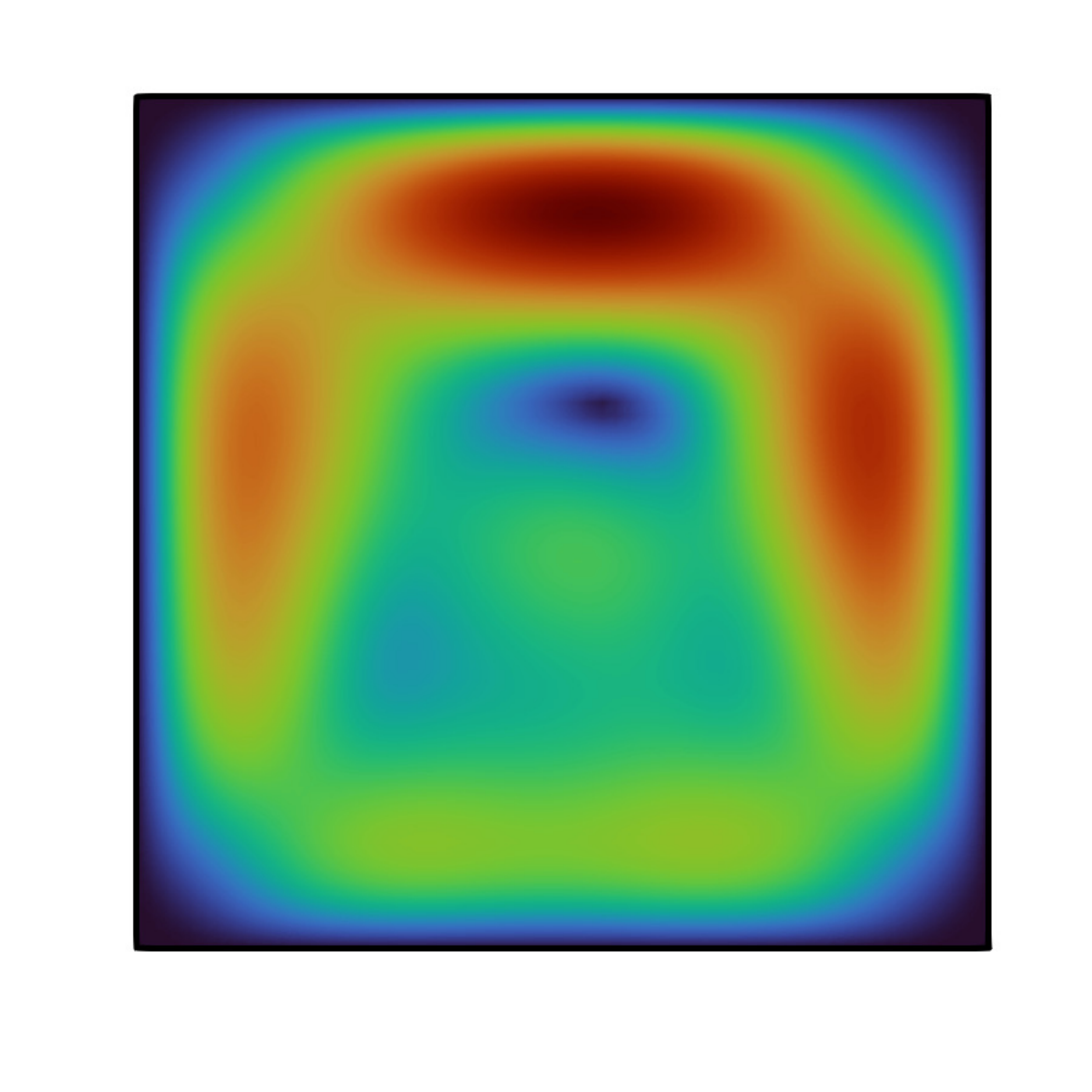} 
            \caption*{$t=0.30$}
        \end{minipage} 
        \begin{minipage}[t]{0.24\linewidth}
            \centering
            \includegraphics[width=3.1cm]{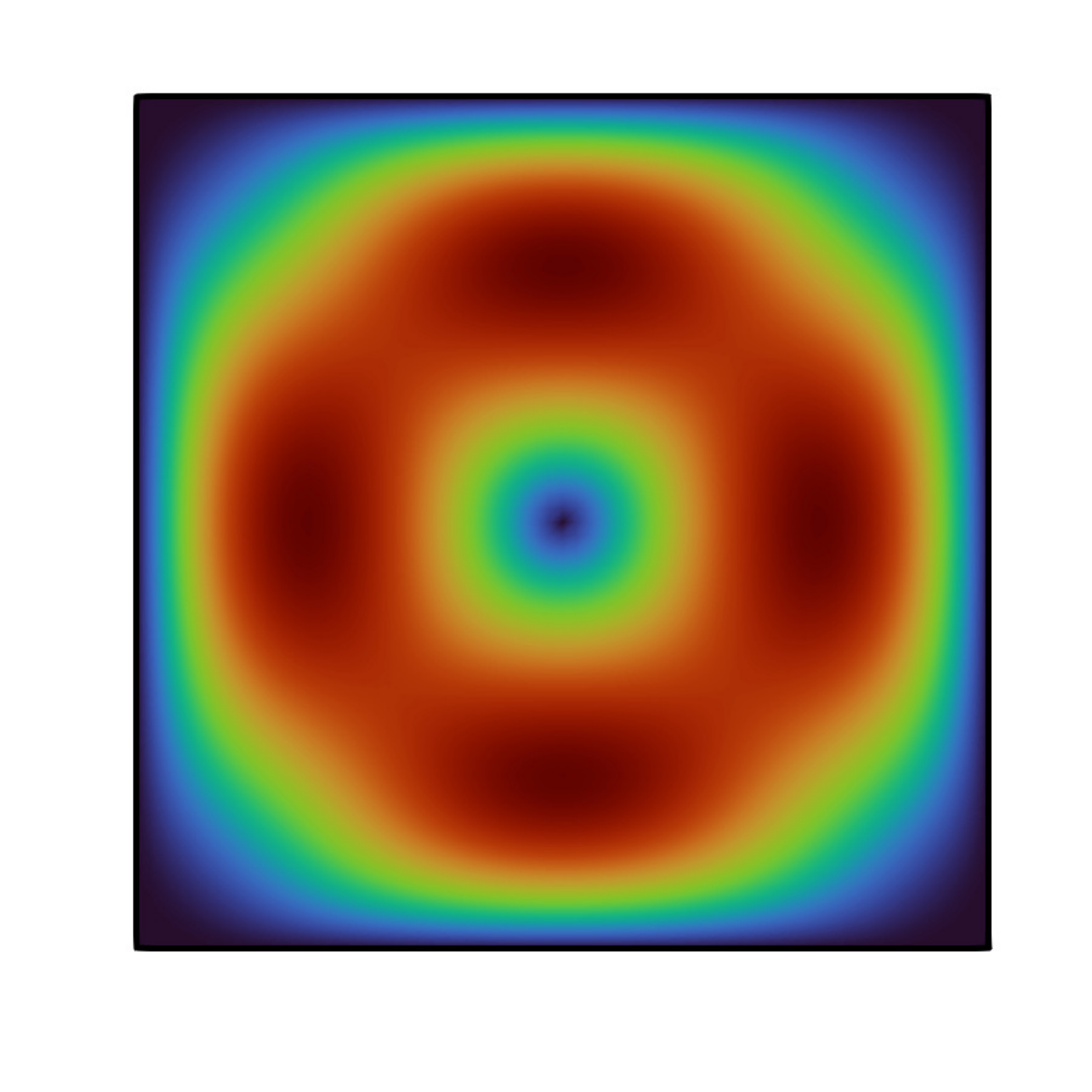} 
            \caption*{$t=1.00$}
        \end{minipage}\\
        \vspace{-0.1cm}
        \begin{minipage}[t]{0.24\linewidth}
            \centering
            \includegraphics[width=3.1cm]{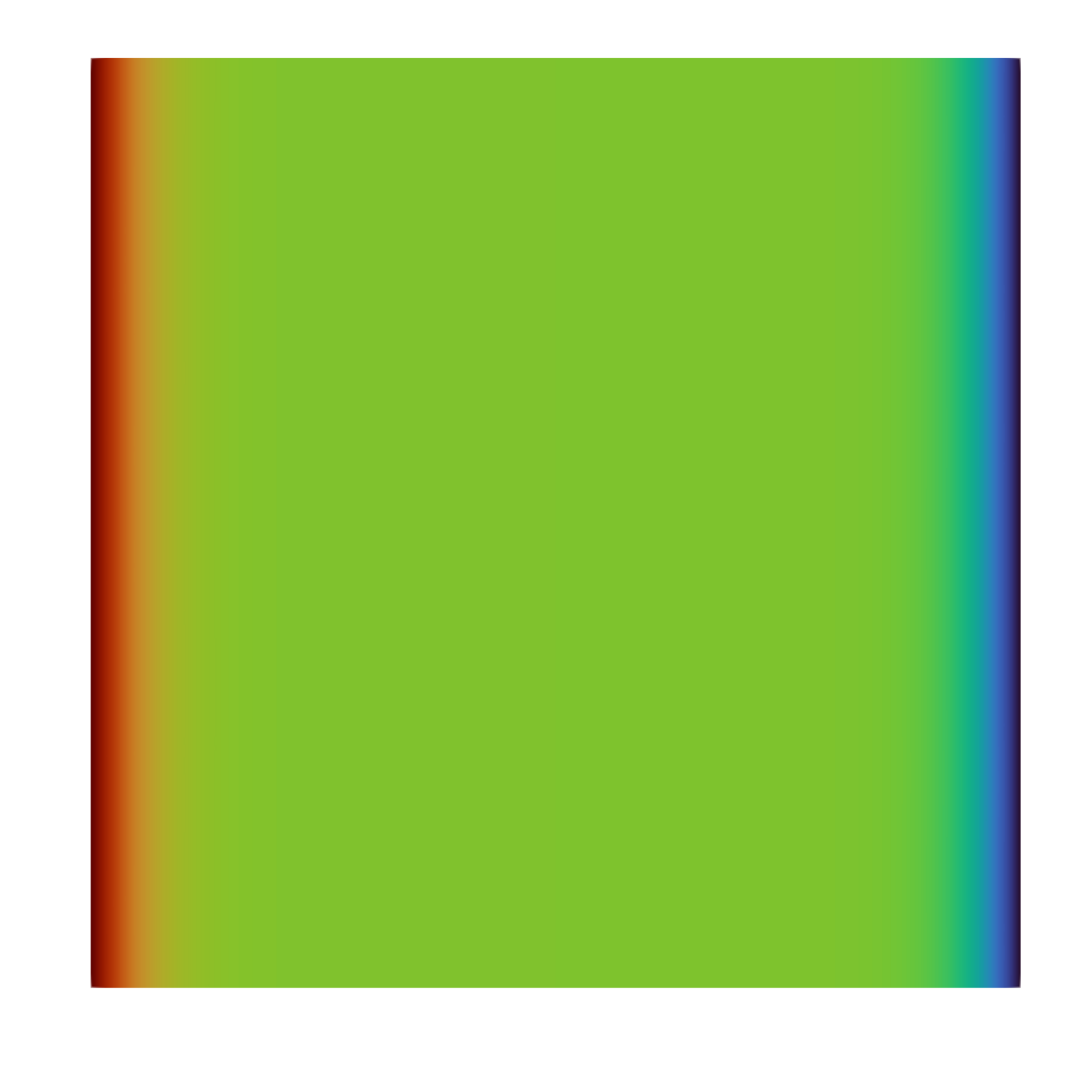} 
            \caption*{$t=0$}
        \end{minipage}
        \begin{minipage}[t]{0.24\linewidth}
            \centering
            \includegraphics[width=3.1cm]{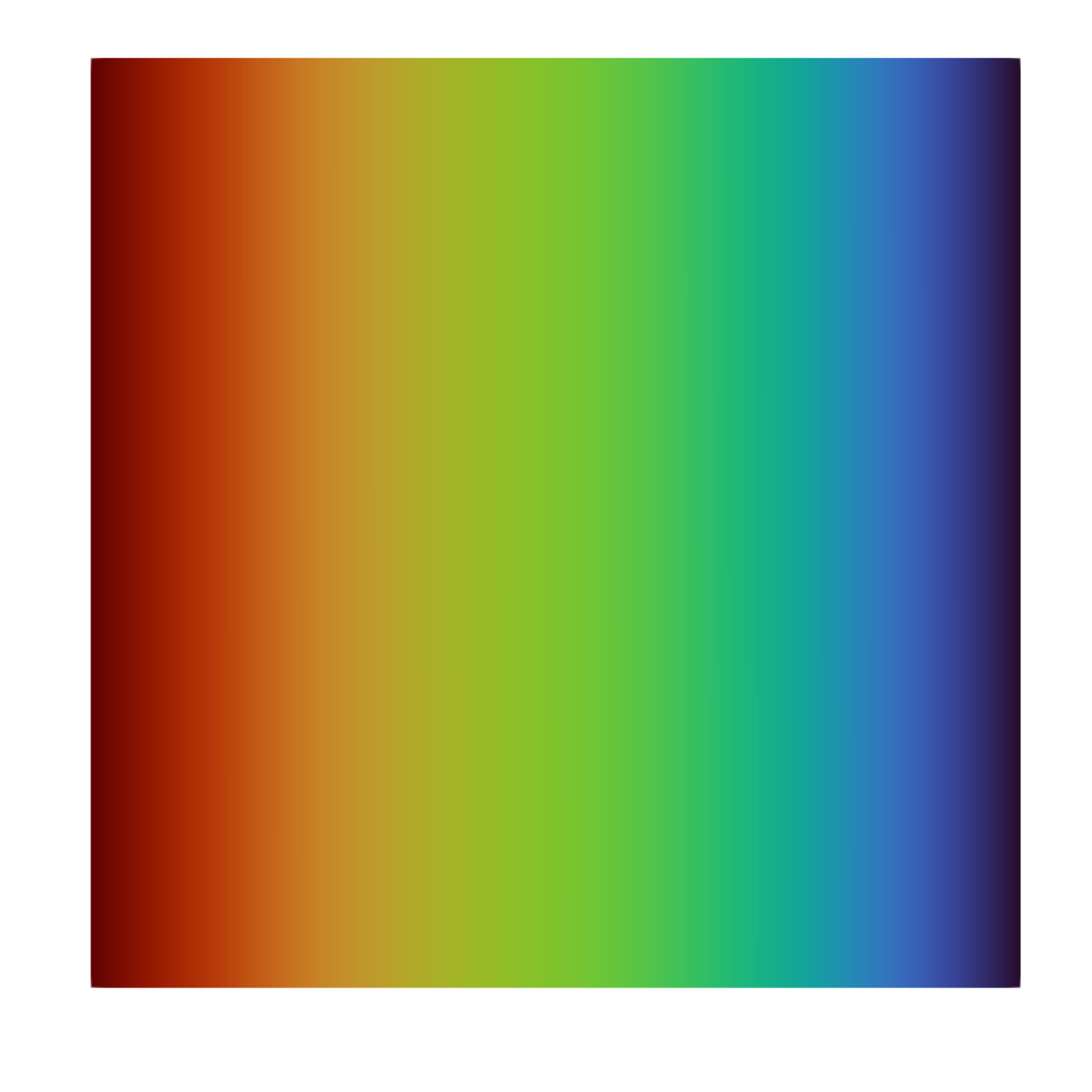} 
            \caption*{$t=0.15$}
        \end{minipage}
        \begin{minipage}[t]{0.24\linewidth}
            \centering
            \includegraphics[width=3.1cm]{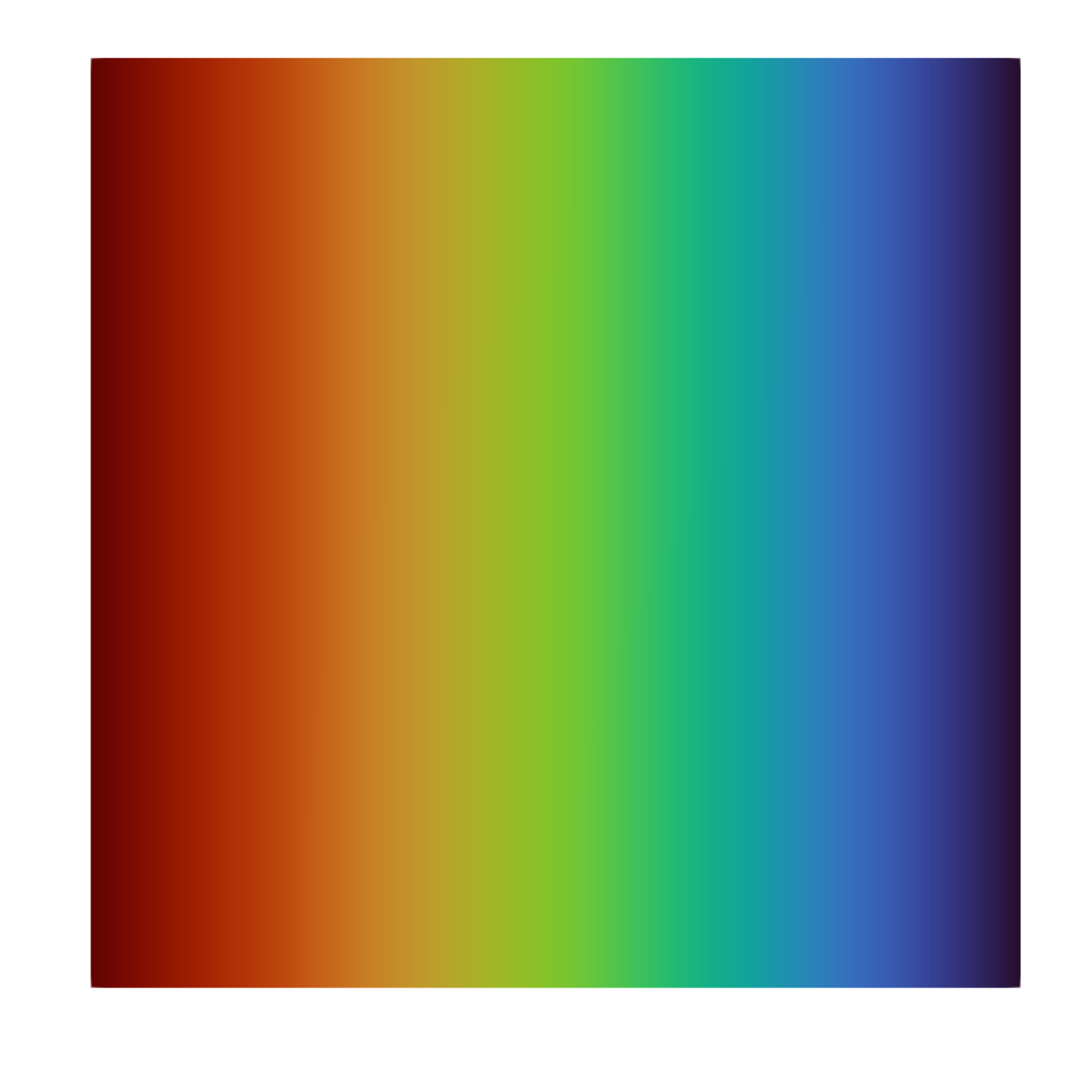} 
            \caption*{$t=0.30$}
        \end{minipage} 
        \begin{minipage}[t]{0.24\linewidth}
            \centering
            \includegraphics[width=3.1cm]{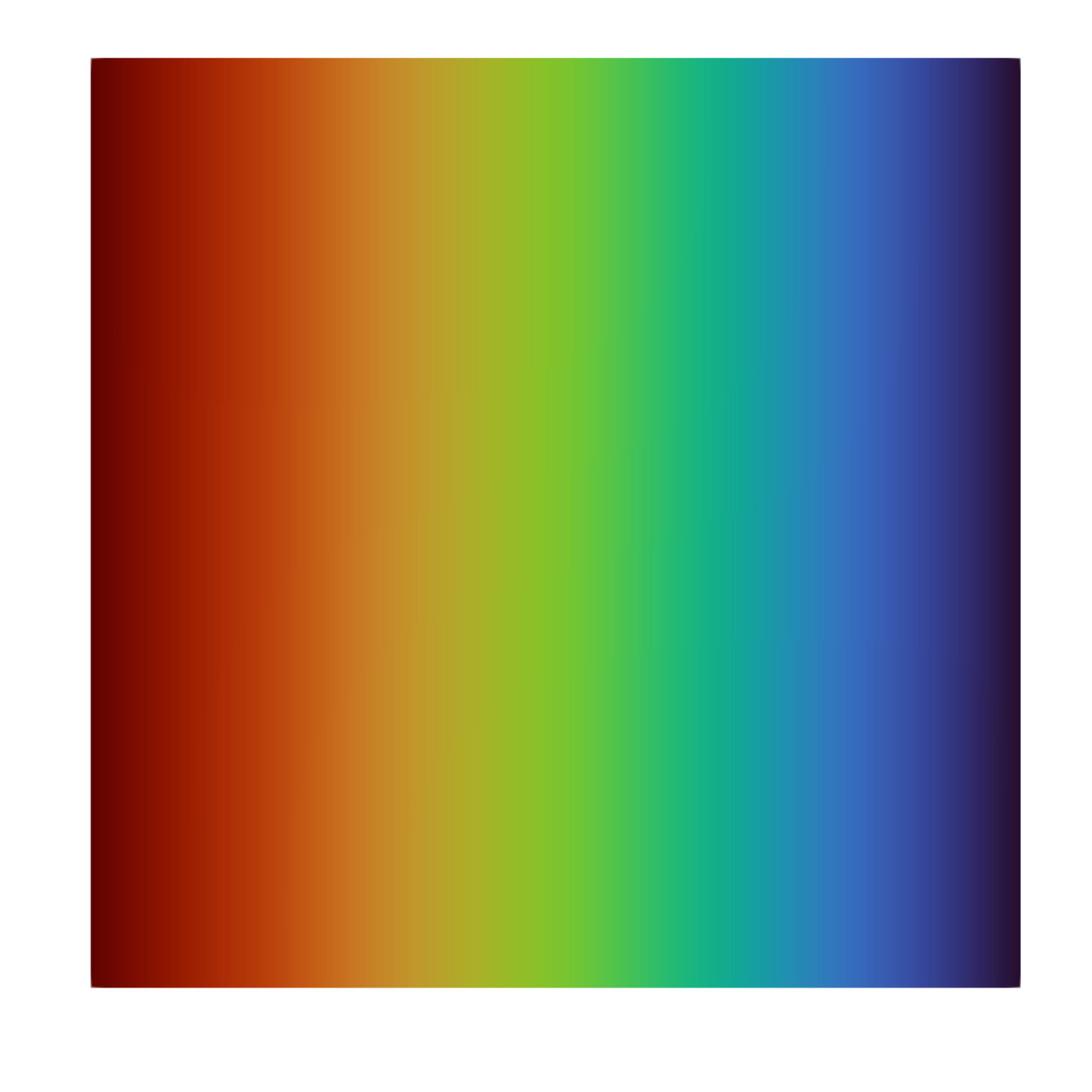} 
            \caption*{$t=1.00$}
        \end{minipage}
        \vspace{-0.5cm}
        \caption{Time evolution of the vector field of velocity (top row), the velocity field (middle row), and the temperature field (bottom row) at selected time instances.}
        \label{fig:velocity_glyph_temperature_comparison}
    \end{figure}

    \subsection{Time adaptive test}
    To relieve the conflict between accuracy and computational cost as well as utilize fine properties of the DLN method under non-uniform time grids, we design a time-adaptive approach for \Cref{fully discrete formulations scheme} based on the minimum dissipation criterion proposed by Capuano, Sanderse, De Angelis, and Coppola \cite{capuano2017minimum}. 
    At each time step, we compute the numerical dissipation (ND) rate for both velocity $u$ and temperature variable $T$, the viscosity-induced dissipation (VD) for $u$, the temperature-induced dissipation (TD) for $w$
    \begin{align*}
    &\text{ND: }\ \ 
    \epsilon_{n}^{u, ND} = \frac{1}{\widehat{k}_n} \| u_{n,\alpha}^{h} \|^2, 
    \quad
    \epsilon_{n}^{T, ND} = \frac{1}{\widehat{k}_n} \| T_{n,\alpha}^{h} \|^2, 
    \\
    &\text{VD: }\ \ 
    \epsilon_{n}^{u, VD} = \mu \| \nabla u_{n,\beta}^h \|^2,
    \quad
    \text{TD: }\ \ 
    \epsilon_{n}^{T, VD} = \kappa \| \nabla T_{n,\beta}^h \|^2,
    \end{align*}
    and the ratios of ND over VD and TD: $\chi_u = \epsilon_{ND}^{u} / \epsilon_{VD}^{u}$, $\chi_w = \epsilon_{ND}^{T} / \epsilon_{VD}^{T}$.
    We adjust the next time step $k_{n+1}$ by 
    \begin{equation}
    \label{time-controller}
    \begin{split}
    &k_{n+1} = 
    \begin{cases} 
    \min \{2 k_n, k_{\max} \}, & \text{if}~  \max \{|\chi_u|,|\chi_w|\} \leq \delta, \\
    \max \{\frac{1}{2}k_n, k_{\min} \}, & \text{if}~ \max \{|\chi_u|,|\chi_w|\} > \delta,
    \end{cases}
    \end{split}
    \end{equation}
    for the required tolerance $\delta >0$.
    We observe from \eqref{time-controller} that the strategy allows a larger time step ($k_{n+1} = 2k_{n}$) for the next operation if the ratios are below the required tolerance; otherwise, decrease the next time step by half. 
    Meanwhile, we require the time step between $k_{\min}$ and $k_{\max}$ for accuracy and efficiency.
    The complete adaptive procedure incorporating the above time step selection strategy is summarized in \Cref{Adaptive_procedure}.
    \begin{algorithm}
        \caption{Adaptive procedure for \Cref{fully discrete formulations scheme}.}
        \label{Adaptive_procedure}
        \begin{algorithmic}
            \State Set $u_0^h = \mathcal{S}_h u_0, T_0^h = \mathcal{R}_h T_0$ and compute $u_1^h$, $w_1^h$, $\phi_1^h$, $p_1^h$, $T_1^h$ using a Crank--Nicolson scheme;
            \For{$n = 1,2, \dots, M-1$}
            \State Solve active fluid equations \eqref{fully discrete formulations 1}–\eqref{fully discrete formulations 5} by  \Cref{fully discrete formulations scheme}.;
            \State Compute numerical and viscous indicators $\epsilon^{u, N\!D}_{n+1}$, $\epsilon^{u, V\!D}_{n+1}$,  $\epsilon^{T, N\!D}_{n+1}$, $\epsilon^{T, V\!D}_{n+1}$;
            \State Calculate $\chi_u = \epsilon^{u, N\!D}_{n+1}/\epsilon^{u, V\!D}_{n+1}$, $\chi_T = \epsilon^{T, N\!D}_{n+1}/\epsilon^{T, V\!D}_{n+1}$;
            \If{$ \max \{|\chi_u|,|\chi_T|\} \leq \delta$}
            \State Set $k_{n+1} = \min \{2 k_n, k_{\max}\}$;
            \Else
            \State Set $k_{n+1} = \max\{0.5 k_n, k_{\min}\}$;
            \EndIf
            
            \State Set $(u_{n}^h, w_{n}^h, \phi_{n}^h, p_{n}^h, T_{n}^h) \Leftarrow (u_{n+1}^h, w_{n+1}^h, \phi_{n+1}^h, p_{n+1}^h, T_{n+1}^h)$ \\
            \quad \quad and $(u_{n-1}^h, w_{n-1}^h, \phi_{n-1}^h, p_{n-1}^h, T_{n-1}^h) \Leftarrow (u_{n}^h, w_{n}^h, \phi_{n}^h, p_{n}^h, T_{n}^h)$ ;
            \State Go to the next step;
            \EndFor
        \end{algorithmic}
    \end{algorithm}

    We evaluate the performance of the proposed time-adaptive strategy via the former experiment in Subsection \ref{subsec:self-organization}. 
    We set $k_{\max} = 0.01$, $k_{\min} = 1.\rm{e}-4$, $\delta = 0.1$, $k_{0} = k_{\min}$, carry out the experiment with different level of Reynolds number $\rm{Re} = 1/\mu$: $5.\rm{e}+2, 5.\rm{e}+3, 5.\rm{e}+4,  5.\rm{e}+5,  5.\rm{e}+6, 5.\rm{e}+7$, and make other parameters and conditions unchanged. 
    We also compare this approach against the corresponding constant time-stepping scheme with $20000$ time steps for the effectiveness of time adaptivity. 
    Both approaches achieve very similar results of the evolution of the vector field of velocity and the velocity field over the time interval $[0,2]$.
    However, \eqref{Comparison of Different Reynolds Numbers with Adaptive Time Steps and Constant Time Steps} shows that the constant time-stepping scheme costs many more time steps under all levels of Reynolds number selected, which emphasizes the superiority of the time-adaptive approach.
    \begin{table}
        \centering
        \caption{The constant time-stepping scheme costs many more time steps under all levels of Reynolds number selected, which emphasizes the superiority of the time-adaptive approach.}
        \begin{tabular}{lrrrrrr}
            \hline
            Re$^a$  & 5.\rm{e}+2 &  5.\rm{e}+3 & 5.\rm{e}+4&  5.\rm{e}+5 &  5.\rm{e}+6&  5.\rm{e}+7 \\
            \hline
            Adaptive$^b$   &  499 &  491& 491& 491  & 504&500\\
            Constant$^c$  &20000& 20000 & 20000 & 20000  & 20000& 20000  \\
            \hline
        \end{tabular}
        \label{Comparison of Different Reynolds Numbers with Adaptive Time Steps and Constant Time Steps}
        
        \vspace{1ex}
        {\footnotesize
            $^a$ Reynolds number; \\
            $^b$ Number of computational steps with adaptive time stepping; \\
            $^c$ Number of computational steps with fixed time steps.
        }
    \end{table}

    \section{Conclusion}
    \label{sec:sec6}
    In this paper, we have developed an efficient spatial-temporal discretization scheme for an equivalent second-order reformulation of the thermally driven active fluid system.
    The variable time-stepping DLN method, which is second-order accurate and nonlinearly stable, is employed as the time integrator.
    For spatial discretization, we introduce two additional auxiliary variables, and thus construct a divergence-free preserving and easily-implemented mixed finite element method.
    With the help of appropriate regularity assumptions and mild time-diameter restrictions, we have rigorously proved that the fully discrete DLN scheme is long-time stable in the model energy and established error estimates for velocity and temperature in $L^2$ and $H^1$-norm and pressure in $L^2$-norm.
    Furthermore, a time-adaptive strategy is designed to maintain robustness of \Cref{fully discrete formulations scheme} and improve computational efficiency.
    Several numerical experiments validate our theoretical findings, demonstrating that \Cref{fully discrete formulations scheme}, along with the time-adaptive approach in \Cref{Adaptive_procedure}, provides an efficient framework for solving thermally driven active fluid dynamics and other more complex systems.

    \section*{Declarations}
    \vskip 0.5cm
    \ \\
    \noindent\textbf{Funding.}
    Wenju Zhao was partially supported by National Key R\&D Program of China (No. 2023YFA1008903), Natural Science Foundation of Shandong Province (No. ZR2023ZD38), National Natural Science Foundation of China (No. 12131014).

    \noindent\textbf{Conflict of interest.}
    The authors declare no potential conflict of interests.

    \noindent\textbf{Code available.}
    Upon request.

    \noindent\textbf{Ethics and consent to participate.} Not applicable

    \bibliographystyle{abbrv}

    \bibliography{references}

\end{document}